\documentclass[12pt, a4paper, reqno]{amsart}
\usepackage{amsmath}
\usepackage{amsfonts}
\usepackage{amssymb}
\usepackage{color}
\usepackage{comment,graphicx}
\usepackage[T1]{fontenc}
\usepackage{url}
\usepackage{ae}
\usepackage{mathtools}
\usepackage{esint}
\usepackage{latexsym}
\usepackage{amscd}

\setcounter{MaxMatrixCols}{10}

\setlength{\oddsidemargin}{0cm} \setlength{\evensidemargin}{0cm}
\setlength{\textwidth}{15.92cm} \setlength{\topmargin}{0cm}
\setlength{\textheight}{23.5cm}
\def\phi{\varphi}
\def\rho{\varrho}
\def\epsilon{\varepsilon}
\numberwithin{equation}{section}
\theoremstyle{plain}
\newtheorem{theorem}[equation]{Theorem}
\newtheorem{lemma}[equation]{Lemma}
\newtheorem{proposition}[equation]{Proposition}
\newtheorem{corollary}[equation]{Corollary}
\theoremstyle{definition}
\newtheorem{definition}[equation]{Definition}

\theoremstyle{remark}
\newtheorem{remark}[equation]{Remark}
\newtheorem{example}[equation]{Example}

\renewcommand{\leq}{\leqslant}
\renewcommand{\geq}{\geqslant}

\pagestyle{headings}

\begin{document}
\title[Weigthed function spaces]{Complex interpolation of function spaces
with general weights}
\author[D. Drihem]{Douadi Drihem}
\address{Douadi Drihem\\
M'sila University\\
Department of Mathematics\\
Laboratory of Functional Analysis and Geometry of Spaces\\
M'sila 28000, Algeria.}
\email{douadidr@yahoo.fr, douadi.drihem@univ-msila.dz}
\thanks{ }
\date{\today }
\subjclass[2000]{ Primary: 42B25, 42B35, 26B35; secondary: 46E35.}

\begin{abstract}
In this paper, we present the complex interpolation of Besov and
Triebel-Lizorkin spaces with generalized smoothness. In some particular
cases these function spaces are just weighted Besov and Triebel-Lizorkin
spaces. An application, we obtain the complex interpolation between the
weighted Triebel-Lizorkin spaces $\dot{F}_{p_{0},q_{0}}^{s_{0}}(\omega _{0})$
and $\dot{F}_{\infty ,q_{1}}^{s_{1}}(\omega _{1})\ $with suitable
assumptions on the parameters $\ s_{0},s_{1},p_{0},$ $q_{0}$ and$\ q_{1}$,
and the pair of weights $(\omega _{0},\omega _{1})$.
\end{abstract}

\keywords{ Besov space, Triebel-Lizorkin space, Complex interpolation,
Muckenhoupt class.}
\maketitle

\section{Introduction}

The theory of complex interpolation had a remarkable development due to its
usefulness in applications in mathematical analysis. For general literature
on complex interpolation we refer to \cite{BL76}, \cite{T78} and references
therein. Let us recall briefly the results of complex interpolation of some
know function spaces. For Lebesgue space we have%
\begin{equation*}
\lbrack L_{p_{0}}(\mathbb{R}^{n}),L_{p_{1}}(\mathbb{R}^{n})]_{\theta }=L_{p}(%
\mathbb{R}^{n}),
\end{equation*}%
see \cite[Theorem 5.1.1]{BL76}, where%
\begin{equation}
0<\theta <1,\quad 1\leq p_{0},p_{1}<\infty ,\quad \frac{1}{p}:=\frac{%
1-\theta }{p_{0}}+\frac{\theta }{p_{1}}.  \label{ass-Lebesgue}
\end{equation}%
Let $L_{p}(\mathbb{R}^{n},\omega )$ denote the weighted Lebesgue space with
weight $\omega $, see below. The weighted version is given as follows: 
\begin{equation}
\lbrack L_{p_{0}}(\mathbb{R}^{n},\omega _{0}),L_{p_{1}}(\mathbb{R}%
^{n},\omega _{1})]_{\theta }=L_{p}(\mathbb{R}^{n},\omega _{0}^{\theta
}\omega _{1}^{1-\theta }),  \label{Lebesgue}
\end{equation}%
see \cite[Theorem 5.5.3]{BL76} and \cite[Theorem 1.18.5]{T78} with the same
assumptions \eqref{ass-Lebesgue}. Clearly all the above results are given
for Banach case, but in \cite[Lemma 3.4]{SSV13} the authors gave a
generalization of \eqref{Lebesgue} to the case $0<p_{0},p_{1}<\infty $. For
Sobolev spaces%
\begin{equation*}
\lbrack W_{p_{0}}^{m_{0}}(\mathbb{R}^{n}),W_{p_{1}}^{m_{1}}(\mathbb{R}%
^{n})]_{\theta }=W_{p}^{m}(\mathbb{R}^{n}),
\end{equation*}%
with the same assumptions \eqref{ass-Lebesgue}, $1<p_{0},p_{1}<\infty $ and $%
m_{0},m_{1}\in \mathbb{N},m=(1-\theta )m_{0}+\theta m_{1}$, see \cite[Remark
2.4.2/2 ]{T78}. The extension of the above results to generalized scale of
function spaces are given in \cite{BL76} and \cite{T78}. For convenience of
the reader we recall some know results on Besov and Triebel-Lizorkin spaces
and it is known that they cover many well-known classical function spaces
such as H\"{o}lder-Zygmund spaces, Hardy spaces and Sobolev spaces. For more
details one can refer to Triebel's books \cite{T1} and \cite{T2}, which are
denoted by $B_{p,q}^{s}(\mathbb{R}^{n},\omega )$ and $B_{p,q}^{s}(\mathbb{R}%
^{n},\omega )$, respectively, see Section 3. Let 
\begin{equation}
0<\theta <1,\quad 0<p_{0},p_{1},q_{0},q_{1}\leq \infty ,\quad \frac{1}{p}:=%
\frac{1-\theta }{p_{0}}+\frac{\theta }{p_{1}},\quad \frac{1}{q}:=\frac{%
1-\theta }{q_{0}}+\frac{\theta }{q_{1}}.  \label{ass-besov1}
\end{equation}%
Then we have%
\begin{equation*}
\lbrack B_{p_{0},q_{0}}^{s_{0}}(\mathbb{R}^{n}),B_{p_{1},q_{1}}^{s_{1}}(%
\mathbb{R}^{n})]_{\theta }=B_{p,q}^{s}(\mathbb{R}^{n}),
\end{equation*}%
if 
\begin{equation}
s_{0},s_{1}\in \mathbb{R},s=(1-\theta )s_{0}+\theta
s_{1},1<p_{0},p_{1}<\infty ,1\leq q_{0}<\infty ,1\leq q_{1}\leq \infty ,
\label{ass-besov2}
\end{equation}%
see \cite[Theorem 2.4.1 ]{T78}. We mention that in some cases complex
interpolation of pairs of Besov spaces does not result in a Besov space.
More precisely%
\begin{equation}
\lbrack B_{p,\infty }^{s_{0}}(\mathbb{R}^{n}),B_{p,\infty }^{s_{1}}(\mathbb{R%
}^{n})]_{\theta }=\mathring{B}_{p,\infty }^{s}(\mathbb{R}^{n}),
\label{contradition}
\end{equation}%
see again \cite[Theorem 2.4.1 ]{T78}, with $s_{0}\neq s_{1}\in \mathbb{R}%
,s=(1-\theta )s_{0}+\theta s_{1},1<p<\infty $, where $\mathring{B}_{p,q}^{s}(%
\mathbb{R}^{n})$ is the closure of the set of test functions in $B_{p,\infty
}^{s}(\mathbb{R}^{n})$. The result \eqref{contradition} is in contradiction
with \cite[Theorem 6.4.5]{BL76} in the case of $s_{0}\neq s_{1}$ and $%
q_{0}=q_{1}=\infty $, because of $\mathring{B}_{p,\infty }^{s}(\mathbb{R}%
^{n})$ is smaller that $B_{p,\infty }^{s}(\mathbb{R}^{n})$. For
Triebel-Lizorkin spaces we have%
\begin{equation*}
\lbrack F_{p_{0},q_{0}}^{s_{0}}(\mathbb{R}^{n}),F_{p_{1},q_{1}}^{s_{1}}(%
\mathbb{R}^{n})]_{\theta }=F_{p,q}^{s}(\mathbb{R}^{n}),
\end{equation*}%
with the same assumptions \eqref{ass-besov1} and \eqref{ass-besov2} but with 
$1<q_{0}<\infty ,1<q_{1}<\infty $, see \cite[Theorem 2.4.2/1 ]{T78}. The
quasi-Banach version of complex interpolation of Besov and Triebel-Lizorkin
spaces can be found in \cite{FJ90} and \cite{KMM07}. For weighted Besov and
Lizorkin-Triebel spaces Bownik \cite{M08} and Wojciechowska \cite{Wo12}
studied the problem%
\begin{equation}
\lbrack F_{p_{0},q_{0}}^{s_{0}}(\mathbb{R}^{n},\omega
_{0}),F_{p_{1},q_{1}}^{s_{1}}(\mathbb{R}^{n},\omega _{1})]_{\theta }.
\label{SSY}
\end{equation}%
But both authors only deal with the case $\omega _{0}=\omega _{1}=\omega $.
The general problem, $\omega _{0}\neq \omega _{1}$, was completed in \cite%
{SSV13} but with $p_{0}<\infty $ and $p_{1}<\infty $ and the weights $\omega
_{0},\omega _{1}$ are local Muckenhoupt weights in the sense of Rychkov \cite%
{Ry01}.

In this direction we present the complex interpolation of function spaces of
general weights, were introduced in \cite{D20} and \cite{D20.1}, that based
on Tyulenev class given in \cite{Ty15}, \cite{Ty-151} and \cite{Ty-N-L}. An
application, we study \eqref{SSY}, when $p_{1}=\infty $ and $\omega _{0}\neq
\omega _{1}$.

These type of spaces of generalized smoothness are have been introduced by
several authors. We refer, for instance, to Bownik \cite{M07}, Cobos and
Fernandez \cite{CF88}, Goldman \cite{Go79} and \cite{Go83}, and Kalyabin 
\cite{Ka83}; see also\ Besov \cite{B03} and \cite{B05}, and Kalyabin and
Lizorkin \cite{Kl87}.

The theory of these spaces had a remarkable development in part due to its
usefulness in applications. For instance, they appear in the study of trace
spaces on fractals, see Edmunds and Triebel \cite{ET96} and \cite{ET99},
were they introduced the spaces $B_{p,q}^{s,\Psi }$, where $\Psi $ is a
so-called admissible function, typically of log-type near $0$. For a
complete treatment of these spaces we refer the reader the work of Moura 
\cite{Mo01}. More general function spaces of generalized smoothness can be
found in Farkas and Leopold \cite{FL06}, and reference therein.

The paper is organized as follows. First we recall some basic facts on the
Muckenhoupt classes and the weighted class of Tyulenev. Also we give some
key technical lemmas needed in the proofs of the main statements. In Section
3, we present some properties of $\dot{B}_{p,q}(\mathbb{R}^{n},\{t_{k}\})$
and $\dot{F}_{p,q}(\mathbb{R}^{n},\{t_{k}\})$ spaces. In Section 4 we shall
apply a method which has been used by \cite{SSV13}. First we shall calculate
the Calder\'{o}n products of associated sequence spaces. Then, from an
abstract theory on the relation between the complex interpolation and the
Calder\'{o}n product of Banach lattices obtained by Calder\'{o}n \cite{Ca64}%
, we deduce the complex interpolation theorems of these sequence spaces.
Finally, the desired complex interpolation theorem is lifted by the $\varphi 
$-transform characterization in the sense of Frazier and Jawerth.\vskip5pt

We will adopt the following convention throughout this paper. As usual, we
denote by $\mathbb{R}^{n}$ the $n$-dimensional real Euclidean space, while $%
\mathbb{N}$ the collection of all natural numbers and $\mathbb{N}_{0}=%
\mathbb{N}\cup \{0\}$. The letter $\mathbb{Z}$ stands for the set of all
integer numbers.\ The expression $f\lesssim g$ means that $f\leq c\,g$ for
some independent constant $c$ (and non-negative functions $f$ and $g$), and $%
f\approx g$ means $f\lesssim g\lesssim f$. \vskip5pt

By supp $f$ we denote the support of the function $f$, i.e., the closure of
its non-zero set. If $E\subset {\mathbb{R}^{n}}$ is a measurable set, then $%
|E|$ stands for the (Lebesgue) measure of $E$ and $\chi _{E}$ denotes its
characteristic function. By $c$ we denote generic positive constants, which
may have different values at different occurrences. \vskip5pt

A weight is a nonnegative locally integrable function on $\mathbb{R}^{n}$
that takes values in $(0,\infty )$ almost everywhere. For measurable set $%
E\subset \mathbb{R}^{n}$ and a weight $\gamma $, $\gamma (E)$ denotes 
\begin{equation*}
\int_{E}\gamma (x)dx.
\end{equation*}%
Given a measurable set $E\subset \mathbb{R}^{n}$ and $0<p\leq \infty $, we
denote by $L_{p}(E)$ the space of all functions $f:E\rightarrow \mathbb{C}$
equipped with the quasi-norm 
\begin{equation*}
\big\|f|L_{p}(E)\big\|:=\Big(\int_{E}\left\vert f(x)\right\vert ^{p}dx\Big)^{%
\frac{1}{p}}<\infty ,
\end{equation*}%
with $0<p<\infty $ and%
\begin{equation*}
\big\|f|L_{\infty }(E)\big\|:=\underset{x\in E}{\text{ess-sup}}\left\vert
f(x)\right\vert <\infty .
\end{equation*}%
For a function $f$ in $L_{1}^{\mathrm{loc}}$, we set 
\begin{equation*}
M_{A}(f):=\frac{1}{|A|}\int_{A}\left\vert f(x)\right\vert dx
\end{equation*}%
for any $A\subset \mathbb{R}^{n}$. Furthermore, we put%
\begin{equation*}
M_{A,p}(f):=\Big(\frac{1}{|A|}\int_{A}\left\vert f(x)\right\vert ^{p}dx\Big)%
^{\frac{1}{p}},
\end{equation*}%
with $0<p<\infty $. Further, given a measurable set $E\subset \mathbb{R}^{n}$
and a weight $\gamma $, we denote the space of all functions $f:\mathbb{R}%
^{n}\rightarrow \mathbb{C}$ with finite quasi-norm 
\begin{equation*}
\big\|f|L_{p}(\mathbb{R}^{n},\gamma )\big\|=\big\|f\gamma |L_{p}(\mathbb{R}%
^{n})\big\|
\end{equation*}%
by $L_{p}(\mathbb{R}^{n},\gamma )$.

If $1\leq p\leq \infty $ and $\frac{1}{p}+\frac{1}{p^{\prime }}=1$, then $%
p^{\prime }$ is called the conjugate exponent of $p$.

The symbol $\mathcal{S}(\mathbb{R}^{n})$ is used in place of the set of all
Schwartz functions on $\mathbb{R}^{n}$. In what follows, $Q$ will denote an
cube in the space $\mathbb{R}^{n}$\ with sides parallel to the coordinate
axes and $l(Q)$\ will denote the side length of the cube $Q$. For $v\in 
\mathbb{Z}$ and $m\in \mathbb{Z}^{n}$, denote by $Q_{v,m}$ the dyadic cube,%
\begin{equation*}
Q_{v,m}:=2^{-v}([0,1)^{n}+m).
\end{equation*}%
For the collection of all such cubes we use $\mathcal{Q}:=\{Q_{v,m}:v\in 
\mathbb{Z},m\in \mathbb{Z}^{n}\}$.

\section{Basic tools}

In this section we present some useful results.

\subsection{Muckenhoupt weights}

The purpose of this subsection is to review some known properties of\
Muckenhoupt class.

\begin{definition}
Let $1<p<\infty $. We say that a weight $\gamma $ belongs to the Muckenhoupt
class $A_{p}(\mathbb{R}^{n})$ if there exists a constant $C>0$ such that for
every cube $Q$ the following inequality holds 
\begin{equation}
M_{Q}(\gamma )M_{Q,\frac{p^{\prime }}{p}}(\gamma ^{-1})\leq C.
\label{Ap-constant}
\end{equation}
\end{definition}

The smallest constant $C$ for which $\mathrm{\eqref{Ap-constant}}$ holds,
denoted by $A_{p}(\gamma )$. As an example, we can take%
\begin{equation*}
\gamma (x)=|x|^{\alpha },\quad \alpha \in \mathbb{R}.
\end{equation*}%
Then $\gamma \in A_{p}(\mathbb{R}^{n})$, $1<p<\infty $, if and only if $%
-n<\alpha <n(p-1)$.

For $p=1$ we rewrite the above definition in the following way.

\begin{definition}
We say that a weight $\gamma $ belongs to the Muckenhoupt class $A_{1}(%
\mathbb{R}^{n})$ if there exists a constant $C>0$ such that for every cube $%
Q $\ and for a.e.\ $y\in Q$ the following inequality holds 
\begin{equation}
M_{Q}(\gamma )\leq C\gamma (y).  \label{A1-constant}
\end{equation}
\end{definition}

The smallest constant $C$ for which $\mathrm{\eqref{A1-constant}}$ holds,
denoted by $A_{1}(\gamma )$. The above classes have been first studied by
Muckenhoupt\ \cite{Mu72} and use it to characterize the boundedness of the
Hardy-Littlewood maximal function on $L_{p}(\gamma )$, see the monographs 
\cite[Chapter 7]{Du01}, \cite{GR85}, \cite{L. Graf14} and \cite[Chapter 5]%
{St93}\ for a complete account on the theory of Muckenhoupt weights.

\begin{lemma}
\label{Ap-Property}Let $1\leq p<\infty $.\newline
$\mathrm{(i)}$ If $\gamma \in A_{p}(\mathbb{R}^{n})$, then for any $1\leq
p<q $, $\gamma \in A_{q}(\mathbb{R}^{n})$.\newline
$\mathrm{(ii)}$ Let $1<p<\infty $. $\gamma \in A_{p}(\mathbb{R}^{n})$ if and
only if $\gamma ^{1-p^{\prime }}\in A_{p^{\prime }}(\mathbb{R}^{n})$.\newline
$\mathrm{(iii)}$ Suppose that $\gamma \in A_{p}(\mathbb{R}^{n})$ for some $%
1<p<\infty $. Then there exist a $1<p_{1}<p<\infty $ such that $\gamma \in
A_{p_{1}}(\mathbb{R}^{n})$.
\end{lemma}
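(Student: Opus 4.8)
The three statements are the classical structural properties of the Muckenhoupt classes, and I would establish them in the stated order, since each is essentially self-contained. For part $\mathrm{(i)}$, the goal is to show that $A_p(\mathbb{R}^n) \subset A_q(\mathbb{R}^n)$ whenever $p \le q$. The key observation is a monotonicity of the auxiliary exponent: for a fixed cube $Q$, the quantity $M_{Q,s}(\gamma^{-1})$ is nondecreasing in $s$ by Jensen's (or H\"{o}lder's) inequality applied to averages. When $p=1$ this requires handling the $A_1$ condition \eqref{A1-constant} separately, bounding $M_Q(\gamma^{-1})$ in terms of $\mathrm{ess\,inf}_Q \gamma$. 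I would then verify that the $A_q$ constant \eqref{Ap-constant} is controlled by the $A_p$ constant uniformly over all cubes $Q$, so that finiteness transfers. This step is routine and I expect no real obstacle.

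For part $\mathrm{(ii)}$, the duality statement, the plan is to write out the $A_p$ condition \eqref{Ap-constant} for $\gamma$ and the $A_{p'}$ condition for $\gamma^{1-p'}$ side by side and observe that they are literally the same inequality after relabeling. The computation rests on the elementary exponent identities $(p')' = p$ and the relation between $p$, $p'$, and the powers $1-p'$ and $1-p$; I would check that the product of averages appearing in the two conditions coincides, so that $A_p(\gamma) = A_{p'}(\gamma^{1-p'})$ up to the correct power. The content here is bookkeeping with conjugate exponents, so again there is no genuine difficulty, only the need to keep the normalizations in \eqref{Ap-constant} straight.

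Part $\mathrm{(iii)}$, the open-endedness (self-improvement) of the $A_p$ condition, is the substantive step and the one I expect to be the main obstacle. The standard route is through a reverse H\"{o}lder inequality: one shows that if $\gamma \in A_p$ then $\gamma^{-1} = \gamma^{1-p'}/\,$(suitable normalization) satisfies a reverse H\"{o}lder estimate, i.e. there exist $\varepsilon > 0$ and a constant $C$ with
\begin{equation*}
M_{Q, 1+\varepsilon}\big(\gamma^{-\frac{1}{p-1}}\big) \le C\, M_{Q}\big(\gamma^{-\frac{1}{p-1}}\big)
\end{equation*}
for every cube $Q$. The hard part is the proof of this reverse H\"{o}lder inequality, which is not a one-line manipulation: it uses the $A_\infty$ characterization of Muckenhoupt weights together with a Calder\'{o}n--Zygmund stopping-time decomposition to obtain the self-improving integrability. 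Granting the reverse H\"{o}lder estimate, I would absorb the gain $\varepsilon$ into the exponent, replacing $\frac{1}{p-1}$ by a slightly larger power, which corresponds precisely to lowering $p$ to some $p_1$ with $1 < p_1 < p$; tracking how $\varepsilon$ translates into $p - p_1$ and verifying that the $A_{p_1}$ constant stays finite completes the argument. Rather than reproduce the full stopping-time machinery, I would cite the reverse H\"{o}lder inequality from one of the standard references already listed in the excerpt, such as \cite{GR85} or \cite[Chapter 7]{Du01}, and derive $\mathrm{(iii)}$ from it.
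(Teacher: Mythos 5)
The paper states Lemma \ref{Ap-Property} without proof, treating it as a classical fact and pointing to the standard monographs on Muckenhoupt weights (e.g.\ \cite{Du01}, \cite{GR85}, \cite{St93}); your outline is precisely the standard argument found there --- monotonicity of the averages $M_{Q,s}$ in $s$ for (i), the conjugate-exponent bookkeeping showing $A_{p^{\prime}}(\gamma^{1-p^{\prime}})=A_{p}(\gamma)^{p^{\prime}/p}$ for (ii), and the reverse H\"{o}lder inequality for (iii) --- and it is correct. Deferring the reverse H\"{o}lder estimate to the cited references is exactly what the paper itself implicitly does, so no further comparison is needed.
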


\subsection{The weight class $\dot{X}_{\protect\alpha ,\protect\sigma ,p}$}

Let $0<p\leq \infty $. A weight sequence $\{t_{k}\}$ is called $p$%
-admissible if $t_{k}\in L_{p}^{\mathrm{loc}}(\mathbb{R}^{n})$ for all $k\in 
\mathbb{Z}$. We mention here that 
\begin{equation*}
\int_{E}t_{k}^{p}(x)dx<c(k)
\end{equation*}%
for any $k\in \mathbb{Z}$ and any compact set $E\subset \mathbb{R}^{n}$. For
a $p$-admissible weight sequence $\{t_{k}\}$\ we set%
\begin{equation*}
t_{k,m}:=\big\|t_{k}|L_{p}(Q_{k,m})\big\|,\quad k\in \mathbb{Z},m\in \mathbb{%
Z}^{n}.
\end{equation*}

Tyulenev\ \cite{Ty14} introduced the following new weighted class\ and use
it to study Besov spaces of variable smoothness.

\begin{definition}
\label{Tyulenev-class}Let $\alpha _{1}$, $\alpha _{2}\in \mathbb{R}$, $%
p,\sigma _{1}$, $\sigma _{2}$ $\in (0,+\infty ]$, $\alpha =(\alpha
_{1},\alpha _{2})$ and let $\sigma =(\sigma _{1},\sigma _{2})$. We let $\dot{%
X}_{\alpha ,\sigma ,p}=\dot{X}_{\alpha ,\sigma ,p}(\mathbb{R}^{n})$ denote
the set of $p$-admissible weight sequences $\{t_{k}\}$ satisfying the
following conditions. There exist numbers $C_{1},C_{2}>0$ such that for any $%
k\leq j$\ and every cube $Q,$%
\begin{eqnarray}
M_{Q,p}(t_{k})M_{Q,\sigma _{1}}(t_{j}^{-1}) &\leq &C_{1}2^{\alpha _{1}(k-j)},
\label{Asum1} \\
(M_{Q,p}(t_{k}))^{-1}M_{Q,\sigma _{2}}(t_{j}) &\leq &C_{2}2^{\alpha
_{2}(j-k)}.  \label{Asum2}
\end{eqnarray}
\end{definition}

The constants $C_{1},C_{2}>0$ are independent of both the indexes $k$ and $j$%
.

\begin{remark}
$\mathrm{(i)}$\ We would like to mention that if $\{t_{k}\}$ satisfying $%
\mathrm{\eqref{Asum1}}$ with $\sigma _{1}=r\left( \frac{p}{r}\right)
^{\prime }$ and $0<r<p\leq \infty $, then $t_{k}^{p}\in A_{\frac{p}{r}}(%
\mathbb{R}^{n})$ for any $k\in \mathbb{Z}$ with\ $0<r<p<\infty $ and $%
t_{k}^{-r}\in A_{1}(\mathbb{R}^{n})$ for any $k\in \mathbb{Z}$\ with\ $%
p=\infty $.\newline
$\mathrm{(ii)}$ We say that $t_{k}\in A_{p}(\mathbb{R}^{n})$,\ $k\in \mathbb{%
Z}$, $1<p<\infty $ have the same Muckenhoupt constant if%
\begin{equation*}
A_{p}(t_{k})=c,\quad k\in \mathbb{Z},
\end{equation*}%
where $c$ is independent of $k$.\newline
$\mathrm{(iii)}$ Definition \ref{Tyulenev-class} is different from the one
used in \cite[Definition 2.1]{Ty14} and Definition 2.7 in \cite{Ty15},
because we used the boundedness of the maximal function on weighted Lebesgue
spaces.
\end{remark}

\begin{example}
\label{Example1}Let $0<r<p<\infty $, a weight $\omega ^{p}\in A_{\frac{p}{r}%
}(\mathbb{R}^{n})$ and $\{s_{k}\}=\{2^{ks}\omega ^{p}\}_{k\in \mathbb{Z}}$, $%
s\in \mathbb{R}$. Clearly, $\{s_{k}\}_{k\in \mathbb{Z}}$ lies in $\dot{X}%
_{\alpha ,\sigma ,p}$ for $\alpha _{1}=\alpha _{2}=s$, $\sigma =(r(\frac{p}{r%
})^{\prime },p)$.
\end{example}

\begin{remark}
\label{Tyulenev-class-properties}Let $0<\theta \leq p\leq \infty $. Let $%
\alpha _{1}$, $\alpha _{2}\in \mathbb{R}$, $\sigma _{1},\sigma _{2}\in
(0,+\infty ]$, $\sigma _{2}\geq p$, $\alpha =(\alpha _{1},\alpha _{2})$ and
let $\sigma =(\sigma _{1}=\theta \left( \frac{p}{\theta }\right) ^{\prime
},\sigma _{2})$. Let a $p$-admissible weight sequence $\{t_{k}\}\in \dot{X}%
_{\alpha ,\sigma ,p}$. Then%
\begin{equation*}
\alpha _{2}\geq \alpha _{1},
\end{equation*}%
see \cite{D20}.
\end{remark}

As usual, we put%
\begin{equation*}
\mathcal{M(}f)(x):=\sup_{Q}\frac{1}{|Q|}\int_{Q}\left\vert f(y)\right\vert
dy,\quad f\in L_{1}^{\mathrm{loc}},
\end{equation*}%
where the supremum\ is taken over all cubes with sides parallel to the axis
and $x\in Q$. Also we set 
\begin{equation*}
\mathcal{M}_{\sigma }(f):=\left( \mathcal{M(}\left\vert f\right\vert
^{\sigma })\right) ^{\frac{1}{\sigma }},\quad 0<\sigma <\infty .
\end{equation*}

Recall the vector-valued maximal inequality of Fefferman and Stein \cite%
{FeSt71}.

\begin{theorem}
\label{FS-inequality}Let $0<p<\infty ,0<q\leq \infty $ and $0<\sigma <\min
(p,q)$. Then%
\begin{equation}
\Big\|\Big(\sum\limits_{k=-\infty }^{\infty }\big(\mathcal{M}_{\sigma
}(f_{k})\big)^{q}\Big)^{\frac{1}{q}}|L_{p}(\mathbb{R}^{n})\Big\|\lesssim %
\Big\|\Big(\sum\limits_{k=-\infty }^{\infty }\left\vert f_{k}\right\vert ^{q}%
\Big)^{\frac{1}{q}}|L_{p}(\mathbb{R}^{n})\Big\|  \label{Fe-St71}
\end{equation}%
holds for all sequence of functions $\{f_{k}\}\in L_{p}(\ell _{q})$.
\end{theorem}

We state one of the main tools of this paper, see \cite{D20.1}.

\begin{lemma}
\label{key-estimate1}Let $1<\theta \leq p<\infty $\ and $1<q<\infty $. Let $%
\{t_{k}\}$\ be a $p$-admissible\ weight\ sequence\ such that $t_{k}^{p}\in
A_{\frac{p}{\theta }}(\mathbb{R}^{n})$, $k\in \mathbb{Z}$. Assume that $%
t_{k}^{p}$,\ $k\in \mathbb{Z}$ have the same Muckenhoupt constant, $A_{\frac{%
p}{\theta }}(t_{k}^{p})=c,k\in \mathbb{Z}$. Then%
\begin{equation*}
\Big\|\Big(\sum\limits_{k=-\infty }^{\infty }t_{k}^{q}\big(\mathcal{M(}f_{k})%
\big)^{q}\Big)^{\frac{1}{q}}|L_{p}(\mathbb{R}^{n})\Big\|\lesssim \Big\|\Big(%
\sum\limits_{k=-\infty }^{\infty }t_{k}^{q}\left\vert f_{k}\right\vert ^{q}%
\Big)^{\frac{1}{q}}|L_{p}(\mathbb{R}^{n})\Big\|
\end{equation*}%
holds for all sequences of functions $\{f_{k}\}\in L_{p}(\ell _{q})$. In
particular%
\begin{equation*}
\big\|\mathcal{M(}f_{k})|L_{p}(\mathbb{R}^{n},t_{k})\big\|\leq c\big\|%
f_{k}|L_{p}(\mathbb{R}^{n},t_{k})\big\|
\end{equation*}%
holds for all sequences $f_{k}\in L_{p}(\mathbb{R}^{n},t_{k})$, $k\in 
\mathbb{Z}$, where $c>0$ is independent of $k$.
\end{lemma}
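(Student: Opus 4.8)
The plan is to establish the weighted vector-valued maximal inequality by reducing it to the unweighted Fefferman--Stein inequality \eqref{Fe-St71} through a duality-and-extrapolation argument exploiting the hypothesis that the weights $t_k^p$ lie in $A_{p/\theta}(\mathbb{R}^n)$ with a uniform Muckenhoupt constant. First I would observe that since $t_k^p\in A_{p/\theta}$ for every $k$, Lemma~\ref{Ap-Property}(iii) yields, at the level of the single class $A_{p/\theta}$, a small improvement: there is a fixed $1<\theta_1<\theta$ (equivalently a slightly smaller exponent $p/\theta_1$) such that $t_k^p\in A_{p/\theta_1}(\mathbb{R}^n)$. The point of requiring the \emph{same} Muckenhoupt constant is precisely that all constants produced in the argument can be taken independent of $k$; this is what allows the estimate to be summed over $k\in\mathbb{Z}$ without blow-up.

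The heart of the argument should be a duality pairing. Write the left-hand side as the $L_{p/q}$-norm (or, when $q<p$, the $L_{(p/q)'}$-dual pairing) of the function $\sum_k t_k^q(\mathcal{M}f_k)^q$, and test it against a nonnegative $g$ in the dual space with $\|g|L_{(p/q)'}\|\le 1$. After interchanging the sum and the integral by Tonelli, each term becomes $\int_{\mathbb{R}^n} t_k^q(\mathcal{M}f_k)^q\,g\,dx$. The key estimate I would invoke here is the pointwise/integral bound for the maximal operator against an $A_1$-type majorant: namely $\int (\mathcal{M}h)\,g\lesssim \int |h|\,\mathcal{M}g$, upgraded to the $L^r$-level via $\int(\mathcal{M}_\sigma f_k)^q\,w\lesssim \int|f_k|^q\,\mathcal{M}_\tau w$ for a suitable auxiliary exponent $\tau$. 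Controlling the resulting $\mathcal{M}_\tau g$ by $g$ in the dual norm is exactly where the $A_{p/\theta}$ membership (with the improvement to $A_{p/\theta_1}$) is consumed, via the self-improving/reverse-Hölder property of Muckenhoupt weights.

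After summing the individual bounds and reassembling the dual pairing, the right-hand side collapses to $\|(\sum_k t_k^q|f_k|^q)^{1/q}|L_p\|$ times the dual norm of $g$, which is $\le 1$; taking the supremum over $g$ closes the inequality. The ``in particular'' statement is then the special case obtained by discarding the $\ell_q$-summation, i.e. fixing a single index $k$ (or using $q=p$ and a one-term sequence); it restates the scalar-weighted boundedness $\|\mathcal{M}f_k|L_p(\mathbb{R}^n,t_k)\|\le c\|f_k|L_p(\mathbb{R}^n,t_k)\|$, which is just the classical characterization of $A_{p/\theta}\subset A_p$ weights via $t_k^p\in A_p$, with $c$ uniform in $k$ because $A_{p/\theta}(t_k)=c$ is uniform.

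The main obstacle I expect is twofold: first, getting the auxiliary exponents ($\theta$, $\theta_1$, $\sigma$, and the dual exponent $(p/q)'$) to fit together so that the intermediate maximal operator $\mathcal{M}_\tau$ is bounded on the dual Lebesgue space with the weight $g$ — this is a bookkeeping problem that hinges on the strict inequality $\theta_1<\theta\le p$ coming from Lemma~\ref{Ap-Property}(iii). Second, and more delicate, is ensuring \emph{uniformity in $k$} of every constant: each application of reverse Hölder, of the self-improvement of $A_{p/\theta}$, and of the dual maximal bound must be controlled solely through $A_{p/\theta}(t_k)=c$ and not through any quantity that could degrade as $k\to\pm\infty$. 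This uniformity is the whole reason the ``same Muckenhoupt constant'' hypothesis is imposed, and verifying it carefully is where the real work lies.
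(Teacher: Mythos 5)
The paper itself contains no proof of this lemma: it is imported as a black box from the preprint \cite{D20.1}, so there is no internal argument to compare yours against, and I am judging your proposal against the standard route for such statements. Your strategy is indeed the canonical one (dualize the $L_{p/q}$-norm, apply a Fefferman--Stein type bound $\int(\mathcal{M}h)^{q}w\,dx\lesssim\int|h|^{q}\mathcal{M}w\,dx$ termwise, then absorb the maximal function of the dual function using the Muckenhoupt hypothesis), and your remarks on why the uniform constant $A_{p/\theta}(t_{k}^{p})=c$ is needed are correct. But as written this is a plan, not a proof: the decisive step is exactly the one you defer. After the termwise Fefferman--Stein bound you are left with $\sum_{k}\int|f_{k}|^{q}\,\mathcal{M}_{\tau}(t_{k}^{q}g)\,dx$, and to reassemble the pairing you must commute the weight past the maximal operator, i.e.\ prove something of the form
\begin{equation*}
\Big\|\sup_{k\in\mathbb{Z}}t_{k}^{-q}\,\mathcal{M}_{\tau}(t_{k}^{q}g)\,\Big|L_{(p/q)'}(\mathbb{R}^{n})\Big\|\lesssim\big\|g|L_{(p/q)'}(\mathbb{R}^{n})\big\|
\end{equation*}
with a constant independent of $k$. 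This single estimate is where the hypothesis $t_{k}^{p}\in A_{p/\theta}$ with $\theta>1$ (equivalently, via Lemma \ref{Ap-Property}(ii), $t_{k}^{-\theta(p/\theta)'}\in A_{(p/\theta)'}$) must actually be spent; you only assert that it ``is consumed'' there. Until that inequality is established the argument does not close, and it is the entire analytic content of the lemma.

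Two further concrete problems. First, the self-improvement step is stated backwards: applying Lemma \ref{Ap-Property}(iii) to $t_{k}^{p}\in A_{p/\theta}$ produces $t_{k}^{p}\in A_{p/\theta_{1}}$ with $\theta_{1}>\theta$, i.e.\ a strictly \emph{smaller} Muckenhoupt index $p/\theta_{1}<p/\theta$; your choice $1<\theta_{1}<\theta$ gives the larger index $p/\theta_{1}>p/\theta$, for which the inclusion $A_{p/\theta}\subset A_{p/\theta_{1}}$ is trivial by monotonicity and buys nothing. (One must also verify that the improved index can be chosen uniformly in $k$, which again rests on the uniform-constant hypothesis.) Second, the duality pairing on $L_{p/q}$ is only available when $q<p$; the regime $p\le q$, which the lemma also covers, requires a separate argument (for instance reduction to the scalar case plus interpolation, or an extrapolation argument), and your sketch does not address it. The ``in particular'' part of the statement is fine as you describe it: a one-term sequence reduces the vector inequality to Muckenhoupt's theorem for $t_{k}^{p}\in A_{p/\theta}\subset A_{p}$ with a constant controlled by the common $A_{p/\theta}$ constant.
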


\begin{remark}
\label{r-estimates}$\mathrm{(i)}$ We would like to mention that the result
of this lemma is true if we assume that $t_{k}^{p}\in A_{\frac{p}{\theta }}(%
\mathbb{R}^{n})$,\ $k\in \mathbb{Z}$, $1<p<\infty $ with%
\begin{equation*}
A_{\frac{p}{\theta }}(t_{k}^{p})\leq c,\quad k\in \mathbb{Z},
\end{equation*}%
where $c>0$ independent of $k$.$\newline
\mathrm{(ii)}$ A proof of this result\ for $t_{k}^{p}=\omega $, $k\in 
\mathbb{Z}$ may be found in \cite{AJ80} and \cite{Kok78}.$\newline
\mathrm{(iii)}$ In view of Lemma \ref{Ap-Property}/(iii) we can assume that $%
t_{k}^{p}\in A_{p}(\mathbb{R}^{n})$,\ $k\in \mathbb{Z}$, $1<p<\infty $ with%
\begin{equation*}
A_{p}(t_{k}^{p})\leq c,\quad k\in \mathbb{Z},
\end{equation*}%
where $c>0$ independent of $k$.
\end{remark}

\section{Function spaces}

In this section we\ present the Fourier analytical definition of Besov and
Triebel-Lizorkin spaces of variable smoothness and recall their basic
properties. Select a pair of Schwartz functions $\varphi $ and $\psi $
satisfy%
\begin{equation}
\text{supp}\mathcal{F}\varphi ,\mathcal{F}\psi \subset \big\{\xi :\frac{1}{2}%
\leq |\xi |\leq 2\big\},  \label{Ass1}
\end{equation}%
\begin{equation}
|\mathcal{F}\varphi (\xi )|,|\mathcal{F}\psi (\xi )|\geq c\quad \text{if}%
\quad \frac{3}{5}\leq |\xi |\leq \frac{5}{3}  \label{Ass2}
\end{equation}%
and 
\begin{equation}
\sum_{k=-\infty }^{\infty }\overline{\mathcal{F}\varphi (2^{-k}\xi )}%
\mathcal{F}\psi (2^{-k}\xi )=1\quad \text{if}\quad \xi \neq 0,  \label{Ass3}
\end{equation}%
where $c>0$. Throughout the paper, for all $k$ $\in \mathbb{Z}$ and $x\in 
\mathbb{R}^{n}$, we put $\varphi _{k}(x):=2^{kn}\varphi (2^{k}x)$ and $%
\tilde{\varphi}(x):=\overline{\varphi (-x)}$. Let $\varphi \in \mathcal{S}(%
\mathbb{R}^{n})$ be a function satisfying $\mathrm{\eqref{Ass1}}$-$\mathrm{%
\eqref{Ass2}}$. Recall that there exists a function $\psi \in \mathcal{S}(%
\mathbb{R}^{n})$ satisfying $\mathrm{\eqref{Ass1}}$-$\mathrm{\eqref{Ass3}}$,
see \cite[Lemma (6.9)]{FrJaWe01}. We set%
\begin{equation*}
\mathcal{S}_{\infty }(\mathbb{R}^{n}):=\Big\{\varphi \in \mathcal{S}(\mathbb{%
R}^{n}):\int_{\mathbb{R}^{n}}x^{\beta }\varphi (x)dx=0\text{ for all
multi-indices }\beta \in \mathbb{N}_{0}^{n}\Big\}.
\end{equation*}%
Let $\mathcal{S}_{\infty }^{\prime }(\mathbb{R}^{n})$ be the topological
dual of $\mathcal{S}_{\infty }(\mathbb{R}^{n})$, namely, the set of all
continuous linear functionals on $\mathcal{S}_{\infty }(\mathbb{R}^{n})$.

Now, we define the spaces under consideration.

\begin{definition}
\label{B-F-def}Let $0<p\leq \infty $ and $0<q\leq \infty $. Let $\{t_{k}\}$
be a $p$-admissible weight sequence, and $\varphi \in \mathcal{S}(\mathbb{R}%
^{n})$\ satisfy $\mathrm{\eqref{Ass1}}$ and $\mathrm{\eqref{Ass2}}$.\newline
$\mathrm{(i)}$ The Besov space $\dot{B}_{p,q}(\mathbb{R}^{n},\{t_{k}\})$\ is
the collection of all $f\in \mathcal{S}_{\infty }^{\prime }(\mathbb{R}^{n})$%
\ such that 
\begin{equation*}
\big\|f|\dot{B}_{p,q}(\mathbb{R}^{n},\{t_{k}\})\big\|:=\Big(%
\sum\limits_{k=-\infty }^{\infty }\big\|t_{k}(\varphi _{k}\ast f)|L_{p}(%
\mathbb{R}^{n})\big\|^{q}\Big)^{\frac{1}{q}}<\infty
\end{equation*}%
with the usual modifications if $q=\infty $.\newline
$\mathrm{(ii)}$ Let $0<p<\infty $. The Triebel-Lizorkin space $\dot{F}_{p,q}(%
\mathbb{R}^{n},\{t_{k}\})$\ is the collection of all $f\in \mathcal{S}%
_{\infty }^{\prime }(\mathbb{R}^{n})$\ such that 
\begin{equation*}
\big\|f|\dot{F}_{p,q}(\mathbb{R}^{n},\{t_{k}\})\big\|:=\Big\|\Big(%
\sum\limits_{k=-\infty }^{\infty }t_{k}^{q}|\varphi _{k}\ast f|^{q}\Big)^{%
\frac{1}{q}}|L_{p}(\mathbb{R}^{n})\Big\|<\infty
\end{equation*}%
with the usual modifications if $q=\infty $.
\end{definition}

\begin{remark}
Some properties of these function spaces, such as the $\varphi $-transform
characterization in the sense of Frazier and Jawerth, the smooth atomic and
molecular decomposition and the characterization of $\dot{B}_{p,q}(\mathbb{R}%
^{n},\{t_{k}\})$ spaces in terms of the difference relations are given in\ 
\cite{D20}, \cite{D20.1} and \cite{D20.2}.
\end{remark}

\begin{remark}
We would like to mention that the elements of the above spaces are not
distributions but equivalence classes of distributions$.$ We will use $\dot{A%
}_{p,q}(\mathbb{R}^{n},\{t_{k}\})$ to denote either $\dot{B}_{p,q}(\mathbb{R}%
^{n},\{t_{k}\})$ or $\dot{F}_{p,q}(\mathbb{R}^{n},\{t_{k}\})$.
\end{remark}

Using the system $\{\varphi _{k}\}_{k\in \mathbb{Z}}$ we can define the
quasi-norms%
\begin{equation*}
\big\|f|\dot{B}_{p,q}^{s}(\mathbb{R}^{n})\big\|:=\Big(\sum\limits_{k=-\infty
}^{\infty }2^{ksq}\big\|\varphi _{k}\ast f|L_{p}(\mathbb{R}^{n})\big\|^{q}%
\Big)^{\frac{1}{q}}
\end{equation*}%
and%
\begin{equation*}
\big\|f|\dot{F}_{p,q}^{s}(\mathbb{R}^{n})\big\|:=\Big\|\Big(%
\sum\limits_{k=-\infty }^{\infty }2^{ksq}|\varphi _{k}\ast f|^{q}\Big)^{%
\frac{1}{q}}|L_{p}(\mathbb{R}^{n})\Big\|
\end{equation*}%
for constants $s\in \mathbb{R}$ and $0<p,q\leq \infty $ with $0<p<\infty $
in the $\dot{F}$-case. The Besov space $\dot{B}_{p,q}^{s}(\mathbb{R}^{n})$\
consist of all distributions $f\in \mathcal{S}_{\infty }^{\prime }(\mathbb{R}%
^{n})$ for which 
\begin{equation*}
\big\|f|\dot{B}_{p,q}^{s}(\mathbb{R}^{n})\big\|<\infty .
\end{equation*}%
The Triebel-Lizorkin space $\dot{F}_{p,q}^{s}(\mathbb{R}^{n})$\ consist of
all distributions $f\in \mathcal{S}_{\infty }^{\prime }(\mathbb{R}^{n})$ for
which 
\begin{equation*}
\big\|f|\dot{F}_{p,q}^{s}(\mathbb{R}^{n})\big\|<\infty .
\end{equation*}%
Further details on the classical theory of these spaces can be found in \cite%
{FJ86}, \cite{FJ90}, \cite{FrJaWe01}, \cite{SchTr87}, \cite{T1} and \cite{T2}%
.

One recognizes immediately that if $\{t_{k}\}=\{2^{sk}\}$, $s\in \mathbb{R}$%
, then 
\begin{equation*}
\dot{B}_{p,q}(\mathbb{R}^{n},\{2^{sk}\})=\dot{B}_{p,q}^{s}(\mathbb{R}%
^{n})\quad \text{and}\quad \dot{F}_{p,q}(\mathbb{R}^{n},\{2^{sk}\})=\dot{F}%
_{p,q}^{s}(\mathbb{R}^{n}).
\end{equation*}%
Moreover, for $\{t_{k}\}=\{2^{sk}w\}$, $s\in \mathbb{R}$ with a weight $w$
we re-obtain the weighted Triebel-Lizorkin spaces; we refer, in particular,
to the papers \cite{Bui82}, \cite{Ry01} and \cite{Sch982} for a
comprehensive treatment of the weighted spaces.

Let $\varphi $, $\psi \in \mathcal{S}(\mathbb{R}^{n})$ satisfying $\mathrm{%
\eqref{Ass1}}$\ through\ $\mathrm{\eqref{Ass3}}$. Recall that the $\varphi $%
-transform $S_{\varphi }$ is defined by setting $(S_{\varphi
}f)_{k,m}=\langle f,\varphi _{k,m}\rangle $ where $\varphi
_{k,m}(x)=2^{kn/2}\varphi (2^{k}x-m)$, $m\in \mathbb{Z}^{n}$ and $k\in 
\mathbb{Z}$. The inverse $\varphi $-transform $T_{\psi }$ is defined by 
\begin{equation*}
T_{\psi }\lambda :=\sum_{k=-\infty }^{\infty }\sum_{m\in \mathbb{Z}%
^{n}}\lambda _{k,m}\psi _{k,m},
\end{equation*}%
where $\lambda =\{\lambda _{k,m}\}_{k\in \mathbb{Z},m\in \mathbb{Z}%
^{n}}\subset \mathbb{C}$, see \cite{FJ90}.

Now we introduce the following sequence spaces.

\begin{definition}
\label{sequence-space}Let $0<p\leq \infty $ and $0<q\leq \infty $. Let $%
\{t_{k}\}$ be a $p$-admissible weight sequence. Then for all complex valued
sequences $\lambda =\{\lambda _{k,m}\}_{k\in \mathbb{Z},m\in \mathbb{Z}%
^{n}}\subset \mathbb{C}$ we define%
\begin{equation*}
\dot{b}_{p,q}(\mathbb{R}^{n},\{t_{k}\}):=\Big\{\lambda :\big\|\lambda |\dot{b%
}_{p,q}(\mathbb{R}^{n},\{t_{k}\})\big\|<\infty \Big\},
\end{equation*}%
where%
\begin{equation*}
\big\|\lambda |\dot{b}_{p,q}(\mathbb{R}^{n},\{t_{k}\})\big\|:=\Big(%
\sum_{k=-\infty }^{\infty }2^{\frac{knq}{2}}\big\|\sum\limits_{m\in \mathbb{Z%
}^{n}}t_{k}\lambda _{k,m}\chi _{k,m}|L_{p}(\mathbb{R}^{n})\big\|^{q}\Big)^{%
\frac{1}{q}}
\end{equation*}%
and 
\begin{equation*}
\dot{f}_{p,q}(\mathbb{R}^{n},\{t_{k}\}):=\Big\{\lambda :\big\|\lambda |\dot{f%
}_{p,q}(\mathbb{R}^{n},\{t_{k}\})\big\|<\infty \Big\}
\end{equation*}%
with $0<p<\infty $, where%
\begin{equation*}
\big\|\lambda |\dot{f}_{p,q}(\mathbb{R}^{n},\{t_{k}\})\big\|:=\Big\|\Big(%
\sum_{k=-\infty }^{\infty }\sum\limits_{m\in \mathbb{Z}^{n}}2^{\frac{knq}{2}%
}t_{k}^{q}|\lambda _{k,m}|^{q}\chi _{k,m}\Big)^{\frac{1}{q}}|L_{p}(\mathbb{R}%
^{n})\Big\|.
\end{equation*}
\end{definition}

Allowing the smoothness $t_{k}$, $k\in \mathbb{Z}$ to vary from point to
point will raise extra difficulties\ to study these function spaces. But by
the following lemma the problem can be reduced to the case of fixed
smoothness.

\begin{proposition}
\label{Equi-norm1}Let $0<p\leq \infty $ and $0<q\leq \infty $. Let $%
\{t_{k}\} $ be a $p$-admissible weight sequence.\newline
$\mathrm{(i)}$ Then%
\begin{equation*}
\big\|\lambda |\dot{b}_{p,q}(\mathbb{R}^{n},\{t_{k}\})\big\|^{\ast }:=\Big(%
\sum_{k=-\infty }^{\infty }2^{\frac{knq}{2}}\Big(\sum\limits_{m\in \mathbb{Z}%
^{n}}|\lambda _{k,m}|^{p}t_{k,m}^{p}\Big)^{\frac{q}{p}}\Big)^{\frac{1}{q}},
\end{equation*}%
\textrm{\ }is an equivalent quasi-norm in $\dot{b}_{p,q}(\mathbb{R}%
^{n},\{t_{k}\})$.\newline
$\mathrm{(ii)}$ Let $0<\theta \leq p<\infty $, $0<q<\infty $ and $0<\kappa
\leq 1$. Assume that $\{t_{k}\}$ satisfying $\mathrm{\eqref{Asum1}}$ with $%
\sigma _{1}=\theta \left( \frac{p}{\theta }\right) ^{\prime }$ and $j=k$.
Then%
\begin{equation*}
\big\|\lambda |\dot{f}_{p,q,\kappa }(\mathbb{R}^{n},\{t_{k}\})\big\|^{\ast
}:=\Big\|\Big(\sum_{k=-\infty }^{\infty }\sum\limits_{m\in \mathbb{Z}%
^{n}}2^{knq(\frac{1}{2}+\frac{1}{\kappa p})}t_{k,m,\kappa }^{q}|\lambda
_{k,m}|^{q}\chi _{k,m}\Big)^{\frac{1}{q}}|L_{p}(\mathbb{R}^{n})\Big\|,
\end{equation*}%
is an equivalent quasi-norm in $\dot{f}_{p,q}(\mathbb{R}^{n},\{t_{k}\})$,
where%
\begin{equation*}
t_{k,m,\kappa }:=\big\|t_{k}|L_{\kappa p}(Q_{k,m})\big\|,\quad k\in \mathbb{Z%
},m\in \mathbb{Z}^{n}.
\end{equation*}
\end{proposition}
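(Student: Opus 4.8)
Part (i) is an exact identity and needs no hypothesis on $\{t_k\}$. For each fixed $k$ the cubes $\{Q_{k,m}\}_{m\in\mathbb{Z}^n}$ are pairwise disjoint and tile $\mathbb{R}^n$, so the summands $t_k\lambda_{k,m}\chi_{k,m}$ have disjoint supports and
\[
\Big\|\sum_{m}t_k\lambda_{k,m}\chi_{k,m}|L_p(\mathbb{R}^n)\Big\|^p=\sum_{m}|\lambda_{k,m}|^p\int_{Q_{k,m}}t_k^p\,dx=\sum_{m}|\lambda_{k,m}|^p\,t_{k,m}^p,
\]
with the obvious ess-sup modification when $p=\infty$. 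Substituting this into the definition of $\big\|\lambda|\dot b_{p,q}(\mathbb{R}^n,\{t_k\})\big\|$ reproduces $\big\|\lambda|\dot b_{p,q}(\mathbb{R}^n,\{t_k\})\big\|^{\ast}$ verbatim.

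For part (ii) I would first normalise the target. Since $|Q_{k,m}|=2^{-kn}$ one has $2^{kn/(\kappa p)}t_{k,m,\kappa}=M_{Q_{k,m},\kappa p}(t_k)$, so if we abbreviate $g_k(x):=2^{kn/2}|\lambda_{k,m}|$ for $x\in Q_{k,m}$, then the original quasi-norm equals $\big\|(\sum_k t_k^q g_k^q)^{1/q}|L_p(\mathbb{R}^n)\big\|$ while the starred one equals $\big\|(\sum_k H_k^q)^{1/q}|L_p(\mathbb{R}^n)\big\|$ with $H_k:=M_{Q_{k,m},\kappa p}(t_k)\,g_k$ on $Q_{k,m}$. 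The whole statement thus reduces to comparing, inside an $L_p(\ell_q)$-norm, the pointwise weight $t_k(x)$ with the cube average $M_{Q_{k,m},\kappa p}(t_k)$. The role of hypothesis \eqref{Asum1} with $j=k$ is exactly that it forces $t_k^p\in A_{p/\theta}$ with constant uniform in $k$, which is the input required by Lemma \ref{key-estimate1}.

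The inequality $\|\cdot\|^{\ast}\lesssim\|\cdot\|$ is the routine half. For $x\in Q_{k,m}$ the cube average never exceeds the associated maximal function, so $H_k(x)\le \mathcal{M}_{\kappa p}(t_k g_k)(x)$; the Fefferman--Stein inequality \eqref{Fe-St71} applied to $\{t_k g_k\}$ with $\sigma=\kappa p$ then closes this direction. (If $\kappa p\ge\min(p,q)$ I would first lower the averaging exponent: the reverse Hölder property of the uniformly $A_{p/\theta}$ family gives $M_{Q,\kappa p}(t_k)\approx M_{Q,\kappa_0 p}(t_k)$ for a small $\kappa_0$, so $\|\cdot\|^{\ast}$ is, up to equivalence, independent of $\kappa\in(0,1]$, and one may assume $\kappa p<\min(p,q)$.)

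The reverse inequality $\|\cdot\|\lesssim\|\cdot\|^{\ast}$ is the main obstacle, since $t_k(x)$ is \emph{not} controlled pointwise by any average of $t_k$, and a naive maximal-function bound therefore fails. The device I would use is to introduce the auxiliary sequence $f_k:=\big(M_{Q_{k,m},\kappa p}(t_k)/t_k\big)\,g_k$ on $Q_{k,m}$, so that $t_k f_k=H_k$ and hence $\big\|(\sum_k t_k^q f_k^q)^{1/q}|L_p\big\|=\|\cdot\|^{\ast}$. Averaging $f_k$ over $Q_{k,m}$ gives, for $x\in Q_{k,m}$,
\[
\mathcal{M}(f_k)(x)\ge g_k(x)\,M_{Q_{k,m},\kappa p}(t_k)\,M_{Q_{k,m},1}(t_k^{-1})\ge g_k(x),
\]
the last step being the elementary bound $M_{Q,\kappa p}(t_k)\,M_{Q,1}(t_k^{-1})\ge1$, which follows from Jensen's inequality applied to the convex function $s\mapsto s^{-\kappa p}$. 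Consequently $t_k g_k\le t_k\,\mathcal{M}(f_k)$ pointwise, and Lemma \ref{key-estimate1} (together with Remark \ref{r-estimates}) yields $\big\|(\sum_k t_k^q g_k^q)^{1/q}|L_p\big\|\lesssim \big\|(\sum_k t_k^q f_k^q)^{1/q}|L_p\big\|=\|\cdot\|^{\ast}$. Finally, Lemma \ref{key-estimate1} is stated for $1<\theta\le p<\infty$ and $1<q<\infty$; to cover the full range $0<\theta\le p<\infty$, $0<q<\infty$ I would rescale by a power $\rho\in(0,\min(\theta,q))$, using that the Muckenhoupt index $p/\theta$ is invariant under $(p,q,\theta)\mapsto(p/\rho,q/\rho,\theta/\rho)$.
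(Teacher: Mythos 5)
Your proposal is correct, and for part (i) and the direction $\big\|\cdot\big\|^{\ast}\lesssim\big\|\cdot\big\|$ it stays in the same elementary circle of ideas as the paper, but for the main direction $\big\|\cdot\big\|\lesssim\big\|\cdot\big\|^{\ast}$ you take a genuinely different route. The paper proves $\big\|\lambda|\dot{f}_{p,q}(\mathbb{R}^{n},\{t_{k}\})\big\|\lesssim\big\|\lambda|\dot{f}_{p,q,\kappa}(\mathbb{R}^{n},\{t_{k}\})\big\|^{\ast}$ by duality: it represents $\big\|\lambda|\dot{f}_{p,q}\big\|^{\eta}$ as a supremum of pairings against $\{g_{k}\}$ in the unit ball of $L_{(p/\eta)^{\prime}}(\ell_{(q/\eta)^{\prime}})$, inserts the H\"older bound $1\le M_{Q,\varrho}(t_{k}^{-\eta})\big(M_{Q,\kappa p}(t_{k})\big)^{\eta}$, and then invokes Fefferman--Stein and Lemma \ref{key-estimate1} for the \emph{dual} weights $t_{k}^{-\eta(p/\eta)^{\prime}}\in A_{(p/\eta)^{\prime}}$. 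Your substitution $f_{k}=\big(M_{Q_{k,m},\kappa p}(t_{k})/t_{k}\big)g_{k}$, combined with the power-mean inequality $M_{Q,\kappa p}(t_{k})\,M_{Q,\rho}(t_{k}^{-1})\ge 1$ and Lemma \ref{key-estimate1} applied directly to the original weights $t_{k}$, reaches the same conclusion without dualizing, which is arguably cleaner; after the $\rho$-rescaling you should note that $\rho<\theta<\sigma_{1}$ guarantees $M_{Q,\rho}(t_{k}^{-1})\le M_{Q,\sigma_{1}}(t_{k}^{-1})<\infty$, so the cube averages of $f_{k}$ are finite. In the opposite direction the trade-off is reversed: the paper uses \eqref{Asum1} with $j=k$ to replace $M_{Q,\sigma_{1}}(t_{k}^{-1})$ by $\big(M_{Q,\kappa p}(t_{k})\big)^{-1}$ and then applies Fefferman--Stein at the safe exponent $\nu p<\min(p,q)$, so no restriction on $\kappa$ arises, whereas your bound $H_{k}\le\mathcal{M}_{\kappa p}(t_{k}g_{k})$ requires $\kappa p<\min(p,q)$ and forces the additional reduction showing that $\big\|\cdot\big\|^{\ast}$ is independent of $\kappa\in(0,1]$; that reduction is sound (the comparison $M_{Q,p}(t_{k})\le C\,M_{Q,\kappa_{0}p}(t_{k})$ follows from \eqref{Asum1} with $j=k$ by the same H\"older step the paper uses), but it is the point where your argument, unlike the paper's for this half, still consumes the hypothesis on $\{t_{k}\}$.
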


\begin{proof}
We prove only (ii) since (i) is obvious. We will proceed in two steps.

\textit{Step 1.} Let us prove that%
\begin{equation}
\big\|\lambda |\dot{f}_{p,q}(\mathbb{R}^{n},\{t_{k}\})\big\|\lesssim \big\|%
\lambda |\dot{f}_{p,q,\kappa }(\mathbb{R}^{n},\{t_{k}\})\big\|^{\ast }.
\label{new-norm}
\end{equation}%
Let $0<\eta <\min (\theta ,q)$. By duality, $\big\|\lambda |\dot{f}_{p,q}(%
\mathbb{R}^{n},\{t_{k}\})\big\|^{\eta }$ is just%
\begin{equation*}
\sup \sum_{k=-\infty }^{\infty }\sum\limits_{m\in \mathbb{Z}^{n}}2^{\frac{%
n\eta }{2}k}|\lambda _{k,m}|^{\eta }\int_{Q_{k,m}}t_{k}^{\eta
}(x)|g_{k}(x)|dx=\sup \sum_{k=-\infty }^{\infty }S_{k},
\end{equation*}%
where the supremum is taking over all sequence of functions $\{g_{k}\}\in
L_{(\frac{p}{\eta })^{\prime }}(\ell _{(\frac{q}{\eta })^{\prime }})$\ with 
\begin{equation}
\big\|\{g_{k}\}|L_{(\frac{p}{\eta })^{\prime }}(\ell _{(\frac{q}{\eta }%
)^{\prime }})\big\|\leq 1.  \label{dual}
\end{equation}%
By H\"{o}lder's inequality,%
\begin{equation*}
1=\Big(\frac{1}{|Q_{k,m}|}\int_{Q_{k,m}}t_{k}^{-\eta h}(x)t_{k}^{\eta h}(x)dx%
\Big)^{\frac{1}{h}}\leq M_{Q_{k,m},\varrho }\mathcal{(}t_{k}^{-\eta })\big(%
M_{Q_{k,m},\kappa p}\mathcal{(}t_{k})\big)^{\eta }
\end{equation*}%
for any $h>0$, with $\frac{1}{h}=\frac{1}{\varrho }+\frac{\eta }{\kappa p}$.
Therefore,%
\begin{equation*}
\int_{Q_{k,m}}t_{k}^{\eta }(x)|g_{k}(x)|dx\lesssim |Q_{k,m}|\big(%
M_{Q_{k,m},\kappa p}\mathcal{(}t_{k})\big)^{\eta }M_{Q_{k,m},\varrho }%
\mathcal{(}t_{k}^{-\eta }\mathcal{M}(t_{k}^{\eta }g_{k})).
\end{equation*}%
We set%
\begin{equation*}
f_{k}(x):=\sum\limits_{m\in \mathbb{Z}^{n}}2^{\frac{kn}{2}}|\lambda
_{k,m}||Q_{k,m}|^{-\frac{1}{\kappa p}}t_{k,m,\kappa }\chi _{k,m}(x),\quad
x\in \mathbb{R}^{n},k\in \mathbb{Z}.
\end{equation*}%
Take $0<\varrho <\min ((\frac{p}{\eta })^{\prime },(\frac{q}{\eta })^{\prime
})$, we find by H\"{o}lder's inequality that the sum $\sum_{k=-\infty
}^{\infty }S_{k}$ can be estimated by 
\begin{align*}
& c\int_{\mathbb{R}^{n}}\sum_{k=-\infty }^{\infty }f_{k}^{\eta }(x)\mathcal{M%
}_{\varrho }(t_{k}^{-\eta }\mathcal{M}(t_{k}^{\eta }g_{k}))(x)dx \\
\lesssim & \Big\|\Big(\sum_{k=-\infty }^{\infty }f_{k}^{q}\Big)^{\frac{\eta 
}{q}}|L_{\frac{p}{\eta }}(\mathbb{R}^{n})\Big\|\Big\|\Big(\sum_{k=-\infty
}^{\infty }\left( \mathcal{M}_{\varrho }(t_{k}^{-\eta }\mathcal{M}%
(t_{k}^{\eta }g_{k}))\right) ^{(\frac{q}{\eta })^{\prime }}\Big)^{\frac{1}{(%
\frac{q}{\eta })^{\prime }}}|L_{(\frac{p}{\eta })^{\prime }}(\mathbb{R}^{n})%
\Big\|.
\end{align*}%
Observe that%
\begin{equation*}
t_{k}^{p}\in A_{\frac{p}{\theta }}\subset A_{\frac{p}{\eta }},\quad k\in 
\mathbb{Z},
\end{equation*}%
which yields that%
\begin{equation*}
t_{k}^{-\eta (\frac{p}{\eta })^{\prime }}\in A_{(\frac{p}{\eta })^{\prime
}},\quad k\in \mathbb{Z},
\end{equation*}%
by Lemma \ref{Ap-Property}/(i)-(ii). By the vector-valued maximal inequality
of Fefferman and Stein $\mathrm{\eqref{Fe-St71}}$, Lemma \ref{key-estimate1}
and $\mathrm{\eqref{dual}}$, we obtain that%
\begin{equation*}
\sum_{k=-\infty }^{\infty }S_{k}\lesssim \Big\|\Big(\sum_{k=-\infty
}^{\infty }f_{k}^{q}\Big)^{\frac{1}{q}}|L_{p}(\mathbb{R}^{n})\Big\|^{\eta },
\end{equation*}%
which yields $\mathrm{\eqref{new-norm}.}$

\textit{Step 2.} We prove the opposite inequality of $\mathrm{%
\eqref{new-norm}}$. We have 
\begin{equation*}
|\lambda _{k,m}|^{\nu \theta }=\frac{1}{|Q_{k,m}|}\int_{Q_{k,m}}|\lambda
_{k,m}|^{\nu \theta }\chi _{k,m}(y)dy,\quad k\in \mathbb{Z},m\in \mathbb{Z}%
^{n}.
\end{equation*}%
Again\ by H\"{o}lder's inequality, since $\frac{\theta }{\sigma _{1}}+\frac{%
\theta }{p}=1$, 
\begin{align*}
|\lambda _{k,m}|\leq & M_{Q_{k,m},\nu p}(\lambda _{k,m}t_{k})M_{Q_{k,m},\nu
\sigma _{1}}(t_{k}^{-1}) \\
\lesssim & \mathcal{M}_{\nu p}\big(\sum\limits_{m\in \mathbb{Z}^{n}}\lambda
_{k,m}t_{k}\chi _{k,m}\big)(x)M_{Q_{k,m},\sigma _{1}}(t_{k}^{-1})
\end{align*}%
for any $x\in Q_{k,m}$ with $0<\nu <\min (1,\frac{q}{p})$. Observing that%
\begin{equation*}
M_{Q_{k,m},\kappa p}(t_{k})\leq M_{Q_{k,m},p}(t_{k}),\quad k\in \mathbb{Z}%
,m\in \mathbb{Z}^{n}.
\end{equation*}%
Hence by $\mathrm{\eqref{Asum1}}$\textrm{, }we find that%
\begin{equation*}
|\lambda _{k,m}|\lesssim 2^{-k\frac{n}{\kappa p}}t_{k,m,\kappa }^{-1}%
\mathcal{M}_{\nu p}\big(\sum\limits_{m\in \mathbb{Z}^{n}}t_{k}\lambda
_{k,m}\chi _{k,m}\big)(x)
\end{equation*}%
for any $x\in Q_{k,m}$. Therefore,%
\begin{align*}
\big\|\lambda |\dot{f}_{p,q,\kappa }(\mathbb{R}^{n},\{t_{k}\})\big\|^{\ast
}\lesssim & \Big\|\Big(\sum_{k=-\infty }^{\infty }\big(\mathcal{M}_{\nu
p}(\sum\limits_{m\in \mathbb{Z}^{n}}2^{\frac{kn}{2}}t_{k}\lambda _{k,m}\chi
_{k,m})\big)^{q}\Big)^{\frac{1}{q}}|L_{p}(\mathbb{R}^{n})\Big\| \\
\lesssim & \big\|\lambda |\dot{f}_{p,q}(\mathbb{R}^{n},\{t_{k}\})\big\|,
\end{align*}%
where we used the vector-valued maximal inequality of Fefferman and Stein $%
\mathrm{\eqref{Fe-St71}}$. The proof is complete.
\end{proof}

For simplicity, in what follows, we use $\dot{a}_{p,q}(\mathbb{R}%
^{n},\{t_{k}\})$ to denote either $\dot{b}_{p,q}(\mathbb{R}^{n},\{t_{k}\})$
or $\dot{f}_{p,q}(\mathbb{R}^{n},\{t_{k}\})$. Now we have the following
result which is called the $\varphi $-transform characterization in the
sense of Frazier and Jawerth. It will play an important role in the rest of
the paper. The proof is given in {\cite{D20} and \cite{D20.1}.}

\begin{theorem}
\label{phi-tran}Let $\alpha =(\alpha _{1},\alpha _{2})\in \mathbb{R}%
^{2},0<\theta \leq p<\infty $ and$\ 0<q<\infty $. Let $\{t_{k}\}\in \dot{X}%
_{\alpha ,\sigma ,p}$ be a $p$-admissible weight sequence with $\sigma
=(\sigma _{1}=\theta \left( \frac{p}{\theta }\right) ^{\prime },\sigma
_{2}\geq p)$.\ Let $\varphi $, $\psi \in \mathcal{S}(\mathbb{R}^{n})$
satisfying $\mathrm{\eqref{Ass1}}$\ through\ $\mathrm{\eqref{Ass3}}$. The
operators 
\begin{equation*}
S_{\varphi }:\dot{A}_{p,q}(\mathbb{R}^{n},\{t_{k}\})\rightarrow \dot{a}%
_{p,q}(\mathbb{R}^{n},\{t_{k}\})
\end{equation*}%
and 
\begin{equation*}
T_{\psi }:\dot{a}_{p,q}(\mathbb{R}^{n},\{t_{k}\})\rightarrow \dot{A}_{p,q}(%
\mathbb{R}^{n},\{t_{k}\})
\end{equation*}%
are bounded. Furthermore, $T_{\psi }\circ S_{\varphi }$ is the identity on $%
\dot{A}_{p,q}(\mathbb{R}^{n},\{t_{k}\})$.
\end{theorem}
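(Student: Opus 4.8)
The plan is to establish the two boundedness assertions separately and then read off the identity $T_\psi\circ S_\varphi=\mathrm{id}$ from the discrete Calder\'on reproducing formula; the weighted vector-valued maximal inequality of Lemma \ref{key-estimate1}, the equivalent sequence quasi-norm of Proposition \ref{Equi-norm1}, and the scale coupling encoded in $\dot{X}_{\alpha,\sigma,p}$ are the three tools that drive everything. For the identity I would first use \eqref{Ass3} to write, for $f\in\mathcal{S}_\infty'(\mathbb{R}^n)$, the continuous resolution $f=\sum_k\tilde\varphi_k*(\psi_k*f)$ with convergence in $\mathcal{S}_\infty'$, and then discretize each $\psi_k*f$ over the dyadic grid to obtain $f=\sum_{k,m}\langle f,\varphi_{k,m}\rangle\,\psi_{k,m}=T_\psi(S_\varphi f)$. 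This step lives entirely in $\mathcal{S}_\infty'$ and does not see the weight, so it is identical to the classical Frazier--Jawerth computation; convergence of the right-hand side in $\dot{A}_{p,q}(\mathbb{R}^n,\{t_k\})$ is then justified a posteriori by the boundedness of $T_\psi$.

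\emph{Boundedness of $S_\varphi$.} With $\tilde\varphi(x)=\overline{\varphi(-x)}$, the coefficients are samples $\langle f,\varphi_{k,m}\rangle=2^{-kn/2}(\tilde\varphi_k*f)(2^{-k}m)$, and $\tilde\varphi$ again satisfies \eqref{Ass1}--\eqref{Ass2}, so $\tilde\varphi_k*f$ has Fourier transform supported in an annulus of radius $\sim 2^k$. A standard Peetre-type maximal estimate then gives, for any $\sigma>0$, $|(\tilde\varphi_k*f)(2^{-k}m)|\lesssim\inf_{y\in Q_{k,m}}\mathcal{M}_\sigma(\tilde\varphi_k*f)(y)$. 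Inserting this into the equivalent quasi-norm of Proposition \ref{Equi-norm1} and fixing $\sigma<\min(\theta,q)$, I would write $\mathcal{M}_\sigma=(\mathcal{M}(|\cdot|^\sigma))^{1/\sigma}$ and apply Lemma \ref{key-estimate1} to $|\tilde\varphi_k*f|^\sigma$ with the exponents $p/\sigma,q/\sigma>1$ and the weight $t_k^\sigma$; here $t_k^p\in A_{p/\theta}$ holds uniformly in $k$ by taking $j=k$ in \eqref{Asum1}, which is exactly the hypothesis needed to run the weighted maximal inequality. This removes $\mathcal{M}_\sigma$ and yields $\|S_\varphi f|\dot{a}_{p,q}(\mathbb{R}^n,\{t_k\})\|\lesssim\|f|\dot{A}_{p,q}(\mathbb{R}^n,\{t_k\})\|$.

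\emph{Boundedness of $T_\psi$.} Applying $\varphi_j*$ to $T_\psi\lambda$, the support condition \eqref{Ass1} forces $\varphi_j*\psi_{k,m}\equiv 0$ unless $|j-k|\le 1$, which confines the entire analysis to adjacent scales; for those the kernel obeys $|\varphi_j*\psi_{k,m}(x)|\lesssim 2^{kn/2}(1+2^k|x-2^{-k}m|)^{-N}$ for every $N$. Summing over $m$ bounds $|\varphi_j*T_\psi\lambda(x)|$ by $\sum_{|j-k|\le 1}\mathcal{M}_\sigma\big(\sum_m 2^{kn/2}|\lambda_{k,m}|\chi_{k,m}\big)(x)$ for a suitable $\sigma<1$. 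Multiplying by $t_j$, transferring it across the finitely many neighbouring levels at the bounded cost $2^{\alpha_i(j-k)}$ permitted by \eqref{Asum1}, and then invoking Lemma \ref{key-estimate1} together with the Fefferman--Stein inequality \eqref{Fe-St71}, I would take the $L_p(\ell_q)$-norm and arrive at $\|T_\psi\lambda|\dot{A}_{p,q}(\mathbb{R}^n,\{t_k\})\|\lesssim\|\lambda|\dot{a}_{p,q}(\mathbb{R}^n,\{t_k\})\|$, which simultaneously supplies the convergence invoked above.

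\emph{Main obstacle.} The delicate point is the weight bookkeeping in the last step: the maximal operator sees the outer pointwise weight $t_j$, whereas the sequence norm is phrased through the cube quantities $t_{k,m}$ at the inner level $k$, and these must be matched uniformly in $j,k$ with $|j-k|\le 1$, including on the polynomial tails where the decay $(1+2^k|x-2^{-k}m|)^{-N}$ must dominate the admissible growth of the weight. This is precisely what the two-sided bounds \eqref{Asum1} (with $\sigma_1=\theta(\tfrac{p}{\theta})'$) together with the uniform Muckenhoupt control underlying Lemma \ref{key-estimate1} deliver; once the weight is immobilized at bounded cost, the remainder is the standard almost-orthogonal $\varphi$-transform machinery.
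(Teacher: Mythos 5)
The paper does not actually carry out a proof of Theorem \ref{phi-tran}; it defers to \cite{D20} and \cite{D20.1}, which follow the classical Frazier--Jawerth scheme. Your outline --- the Plancherel--Polya/Peetre sampling estimate combined with the weighted maximal inequality of Lemma \ref{key-estimate1} for $S_{\varphi }$, the almost-orthogonality $\varphi _{j}\ast \psi _{k,m}\equiv 0$ for $|j-k|\geq 2$ plus the majorization of lattice sums by $\mathcal{M}_{\sigma }$ for $T_{\psi }$, and the Calder\'{o}n reproducing formula $\mathrm{\eqref{Ass3}}$ for the identity --- is exactly that scheme, and the weight transfer across the finitely many adjacent scales that you single out is indeed where $\mathrm{\eqref{Asum1}}$ enters, so the proposal is essentially the same argument as the one the paper relies on.
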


\begin{remark}
\label{phi-tran1}This theorem can then be exploited to obtain a variety of
results for the $\dot{F}_{p,q}(\mathbb{R}^{n},\{t_{k}\})$ spaces, where
arguments can be equivalently transferred to the sequence space, which is
often more convenient to handle. More precisely, under the same hypothesis
of the last theorem, 
\begin{equation*}
\big\|\{\langle f,\varphi _{k,m}\rangle \}_{k\in \mathbb{Z},m\in \mathbb{Z}%
^{n}}|\dot{a}_{p,q}(\mathbb{R}^{n},\{t_{k}\})\big\|\approx \big\|\{f|\dot{A}%
_{p,q}(\mathbb{R}^{n},\{t_{k}\})\big\|.
\end{equation*}
\end{remark}

\begin{corollary}
\label{Indpendent}Let $\alpha =(\alpha _{1},\alpha _{2})\in \mathbb{R}%
^{2},0<\theta \leq p<\infty $ and $0<q<\infty $. Let $\{t_{k}\}\in \dot{X}%
_{\alpha ,\sigma ,p}$ be a $p$-admissible weight sequence with $\sigma
=(\sigma _{1}=\theta \left( \frac{p}{\theta }\right) ^{\prime },\sigma
_{2}\geq p)$. The definition of the spaces $\dot{A}_{p,q}(\mathbb{R}%
^{n},\{t_{k}\})$ is independent of the choices of $\varphi \in \mathcal{S}(%
\mathbb{R}^{n})$ satisfying $\mathrm{\eqref{Ass1}}$\ and\ $\mathrm{%
\eqref{Ass2}}$.
\end{corollary}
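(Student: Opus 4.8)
The strategy is to transfer the question to the sequence spaces of Definition \ref{sequence-space}, which is legitimate because $\dot{a}_{p,q}(\Rn,\{t_k\})$ depends only on $\{t_k\}$, $p$ and $q$, and not on the analysing function; the independence of the function space then drops out of Theorem \ref{phi-tran}. Fix two functions $\varphi^{(0)},\varphi^{(1)}\in\mathcal{S}(\Rn)$, both satisfying $\mathrm{\eqref{Ass1}}$ and $\mathrm{\eqref{Ass2}}$, and write $\dot{A}_{p,q}^{(i)}$ for the space of Definition \ref{B-F-def} built from $\varphi^{(i)}$. To each $\varphi^{(i)}$ I would first associate a partner $\psi^{(i)}\in\mathcal{S}(\Rn)$ so that the pair $(\varphi^{(i)},\psi^{(i)})$ obeys $\mathrm{\eqref{Ass1}}$ through $\mathrm{\eqref{Ass3}}$. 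This is the standard Frazier--Jawerth construction: the lower bound $\mathrm{\eqref{Ass2}}$ forces $\sum_k|\mathcal{F}\varphi^{(i)}(2^{-k}\xi)|^2$ to stay away from zero on each dyadic annulus, so one divides to define $\mathcal{F}\psi^{(i)}$ with the support $\mathrm{\eqref{Ass1}}$ and the normalisation $\mathrm{\eqref{Ass3}}$.

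With the partners in hand, Theorem \ref{phi-tran} applied to the matched pair $(\varphi^{(0)},\psi^{(0)})$ yields, for any $f\in\dot{A}_{p,q}^{(0)}$, the discrete reproducing identity $f=T_{\psi^{(0)}}S_{\varphi^{(0)}}f$ together with the bound
\[
\big\|S_{\varphi^{(0)}}f\,|\,\dot{a}_{p,q}(\Rn,\{t_k\})\big\|\lesssim \big\|f\,|\,\dot{A}_{p,q}^{(0)}\big\|.
\]
Writing $\lambda:=S_{\varphi^{(0)}}f$, the element $f$ is realised as $T_{\psi^{(0)}}\lambda$, a superposition of the building blocks $\psi^{(0)}_{k,m}$ whose Fourier supports lie in the annuli $\{2^{k-1}\le|\xi|\le2^{k+1}\}$.

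I would then measure this same $f$ in the second scale. The point is that the boundedness of the synthesis operator $T_{\psi^{(0)}}$ into a Triebel--Lizorkin or Besov space is insensitive to which admissible function is used to define that space: the estimate rests only on the frequency localisation $\mathrm{\eqref{Ass1}}$, which is shared by $\varphi^{(0)}$, $\varphi^{(1)}$ and $\psi^{(0)}$ alike, so the argument proving the boundedness of $T_{\psi^{(0)}}$ in Theorem \ref{phi-tran} gives equally
\[
\big\|T_{\psi^{(0)}}\lambda\,|\,\dot{A}_{p,q}^{(1)}\big\|\lesssim \big\|\lambda\,|\,\dot{a}_{p,q}(\Rn,\{t_k\})\big\|.
\]
Chaining the three displays gives $\|f\,|\,\dot{A}_{p,q}^{(1)}\|\lesssim\|f\,|\,\dot{A}_{p,q}^{(0)}\|$; interchanging the roles of the two systems gives the reverse inequality, whence the two quasi-norms are equivalent and the spaces coincide.

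The only genuinely substantive point is the cross-matched estimate in the third paragraph, that is, the boundedness of $T_{\psi^{(0)}}$ from $\dot{a}_{p,q}$ into $\dot{A}_{p,q}^{(1)}$ rather than into its own scale $\dot{A}_{p,q}^{(0)}$. This does not follow verbatim from the \emph{statement} of Theorem \ref{phi-tran}, but it follows from its \emph{proof}, since the required almost-diagonal estimates for the kernels $\mathcal{F}\varphi^{(1)}(2^{-k}\cdot)\,\mathcal{F}\psi^{(0)}(2^{-k'}\cdot)$ use only the common support condition $\mathrm{\eqref{Ass1}}$ together with the weight bounds $\mathrm{\eqref{Asum1}}$ already encoded in $\dot{X}_{\alpha,\sigma,p}$. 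Equivalently, one may phrase the whole matter through the composite $S_{\varphi^{(1)}}\circ T_{\psi^{(0)}}$ on $\dot{a}_{p,q}$ and verify its boundedness via these almost-diagonal bounds; I expect this verification, rather than the partner construction, to be where the real work sits.
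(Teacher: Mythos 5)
Your proposal is correct and is precisely the argument the paper intends: the corollary is stated as an immediate consequence of Theorem \ref{phi-tran}, via the standard Frazier--Jawerth composition $f=T_{\psi^{(0)}}S_{\varphi^{(0)}}f$ measured in the second system, using that the sequence space $\dot{a}_{p,q}(\mathbb{R}^{n},\{t_{k}\})$ does not involve any analysing function. You also correctly flag the one substantive point, namely that the synthesis bound for $T_{\psi^{(0)}}$ into the space built from $\varphi^{(1)}$ rests only on the common frequency localisation \eqref{Ass1} and the weight hypotheses, which is exactly how the proof of Theorem \ref{phi-tran} in the cited references is set up.
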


\begin{theorem}
Let $\alpha =(\alpha _{1},\alpha _{2})\in \mathbb{R}^{2},0<\theta \leq
p<\infty $ and $0<q<\infty $. Let $\{t_{k}\}\in \dot{X}_{\alpha ,\sigma ,p}$
be a $p$-admissible weight sequence with $\sigma =(\sigma _{1}=\theta \left( 
\frac{p}{\theta }\right) ^{\prime },\sigma _{2}\geq p)$. $\dot{A}_{p,q}(%
\mathbb{R}^{n},\{t_{k}\})$ are quasi-Banach spaces. They are Banach spaces
if $1\leq p<\infty $ and $1\leq q<\infty $.
\end{theorem}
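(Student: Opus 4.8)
The plan is to deduce completeness of the function spaces from completeness of the associated sequence spaces by means of the $\varphi$-transform characterization (Theorem \ref{phi-tran}), after first recording the (quasi-)norm properties. Homogeneity of $\|\cdot|\dot A_{p,q}(\mathbb R^n,\{t_k\})\|$ is immediate. For the triangle-type inequality I would set $\rho=\min(1,p,q)$ and exploit that $\ell_{q/\rho}$ and $L_{p/\rho}$ are genuine norms (since $q/\rho\ge1$ and $p/\rho\ge1$), together with the pointwise bound $t_k|\varphi_k\ast(f+g)|\le t_k|\varphi_k\ast f|+t_k|\varphi_k\ast g|$ and the elementary inequality $|a+b|^{\rho}\le|a|^{\rho}+|b|^{\rho}$. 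This yields $\|f+g\|^{\rho}\le\|f\|^{\rho}+\|g\|^{\rho}$, so $\|\cdot\|$ is a quasi-norm for $0<p,q<\infty$ and, when $\rho=1$, that is $1\le p,q<\infty$, an honest norm. The same computation applies verbatim to the $\dot b$-case and to the sequence spaces $\dot a_{p,q}(\mathbb R^n,\{t_k\})$.

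Next I would prove that $\dot a_{p,q}(\mathbb R^n,\{t_k\})$ is complete. The decisive observation is that each coordinate functional $\lambda\mapsto\lambda_{k_0,m_0}$ is bounded: restricting the defining (quasi-)norm to the single cube $Q_{k_0,m_0}$ gives, in the $\dot f$-case,
\[
\big\|\lambda|\dot f_{p,q}(\mathbb R^n,\{t_k\})\big\|\ge 2^{k_0 n/2}\,|\lambda_{k_0,m_0}|\,\big\|t_{k_0}|L_p(Q_{k_0,m_0})\big\|=2^{k_0 n/2}\,t_{k_0,m_0}\,|\lambda_{k_0,m_0}|,
\]
and analogously in the $\dot b$-case via Proposition \ref{Equi-norm1}(i); here $0<t_{k_0,m_0}<\infty$ because $t_{k_0}$ is $p$-admissible and positive almost everywhere. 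Hence any Cauchy sequence $\{\lambda^{(j)}\}$ in $\dot a_{p,q}$ is coordinate-wise Cauchy, and I may set $\lambda_{k,m}:=\lim_j\lambda^{(j)}_{k,m}$. Applying Fatou's lemma to the inner sum over $(k,m)$ and then to the outer $L_p$-integral gives $\|\lambda-\lambda^{(j)}|\dot a_{p,q}\|\le\liminf_{l}\|\lambda^{(l)}-\lambda^{(j)}|\dot a_{p,q}\|$, which tends to $0$ by the Cauchy property; combined with the quasi-triangle inequality this also shows $\lambda\in\dot a_{p,q}$. Thus $\dot a_{p,q}(\mathbb R^n,\{t_k\})$ is quasi-Banach, and Banach when $1\le p,q<\infty$.

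Finally I would transfer completeness to $\dot A_{p,q}(\mathbb R^n,\{t_k\})$. By Theorem \ref{phi-tran} the maps $S_\varphi:\dot A_{p,q}\to\dot a_{p,q}$ and $T_\psi:\dot a_{p,q}\to\dot A_{p,q}$ are bounded and $T_\psi\circ S_\varphi=\mathrm{id}$. Given a Cauchy sequence $\{f_j\}$ in $\dot A_{p,q}$, the sequence $\{S_\varphi f_j\}$ is Cauchy in $\dot a_{p,q}$, hence converges to some $\lambda$; setting $f:=T_\psi\lambda\in\dot A_{p,q}$ and using boundedness of $T_\psi$,
\[
\big\|f_j-f|\dot A_{p,q}\big\|=\big\|T_\psi(S_\varphi f_j-\lambda)|\dot A_{p,q}\big\|\lesssim\big\|S_\varphi f_j-\lambda|\dot a_{p,q}\big\|\longrightarrow 0,
\]
so $\{f_j\}$ converges in $\dot A_{p,q}$. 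This proves $\dot A_{p,q}(\mathbb R^n,\{t_k\})$ is quasi-Banach, and Banach for $1\le p,q<\infty$, the normed structure being inherited from that of $\dot a_{p,q}$ through the bounded maps $S_\varphi,T_\psi$. The step I expect to require the most care is the completeness of the sequence space: justifying the double Fatou passage uniformly over the index $(k,m)$ and over the $L_p$-integration (noting that $x\mapsto x^{p/q}$ is monotone so liminf passes through it), and verifying the coordinate bound in the $\dot b$-case through the equivalent norm of Proposition \ref{Equi-norm1}(i).
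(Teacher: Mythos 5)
Your argument is correct, but a genuine comparison with the paper is not possible here: the paper states this theorem without proof and defers it to the preprints \cite{D20} and \cite{D20.1}. Judged on its own terms, each step of your proposal is sound given the tools the paper does provide. The $\rho$-triangle inequality with $\rho=\min(1,p,q)$ is the standard lattice argument and correctly yields a norm when $1\leq p,q<\infty$; the coordinate functionals on $\dot a_{p,q}(\mathbb{R}^{n},\{t_{k}\})$ are indeed bounded because each $t_{k}$ is a weight in $L_{p}^{\mathrm{loc}}$, positive a.e., so $0<t_{k,m}<\infty$; the double Fatou passage works since $x\mapsto x^{p/q}$ is increasing; and the retract argument through $S_{\varphi}$ and $T_{\psi}$ is legitimate because the hypotheses of the theorem coincide exactly with those of Theorem \ref{phi-tran}. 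The alternative route suggested by the paper's own arrangement of results is to use Theorem \ref{embeddings-S-inf}, i.e.\ the continuous embedding $\dot A_{p,q}(\mathbb{R}^{n},\{t_{k}\})\hookrightarrow\mathcal{S}_{\infty}^{\prime}(\mathbb{R}^{n})$: a Cauchy sequence then has a distributional limit $f$ in the sequentially complete space $\mathcal{S}_{\infty}^{\prime}(\mathbb{R}^{n})$, and Fatou's lemma applied to $\varphi_{k}\ast f_{j}\to\varphi_{k}\ast f$ pointwise finishes the proof. Your version trades that distributional limit for a coordinatewise limit in the sequence space and gets the function-space statement as a corollary of the sequence-space one, at the cost of invoking the full strength of the $\varphi$-transform characterization. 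One small omission: to call $\dot A_{p,q}(\mathbb{R}^{n},\{t_{k}\})$ a (quasi-)Banach space you should also record definiteness, $\|f|\dot A_{p,q}(\mathbb{R}^{n},\{t_{k}\})\|=0\Rightarrow f=0$ in $\mathcal{S}_{\infty}^{\prime}(\mathbb{R}^{n})$; this follows from $f=T_{\psi}S_{\varphi}f$ together with your coordinate bounds (which force $S_{\varphi}f=0$), or alternatively from the embedding into $\mathcal{S}_{\infty}^{\prime}(\mathbb{R}^{n})$.
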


\begin{theorem}
\label{embeddings-S-inf}Let $0<\theta \leq p<\infty $ and $0<q<\infty $.
Let\ $\{t_{k}\}\in \dot{X}_{\alpha ,\sigma ,p}$ be a $p$-admissible weight
sequence with $\sigma =(\sigma _{1}=\theta \left( \frac{p}{\theta }\right)
^{\prime },\sigma _{2}\geq p)$ and $\alpha =(\alpha _{1},\alpha _{2})\in 
\mathbb{R}^{2}$.\ \newline
$\mathrm{(i)}$ We have the embedding%
\begin{equation*}
\mathcal{S}_{\infty }(\mathbb{R}^{n})\hookrightarrow \dot{A}_{p,q}(\mathbb{R}%
^{n},\{t_{k}\})\hookrightarrow \mathcal{S}_{\infty }^{\prime }(\mathbb{R}%
^{n}).
\end{equation*}%
In addition $\mathcal{S}_{\infty }(\mathbb{R}^{n})$ is dense in $\dot{A}%
_{p,q}(\mathbb{R}^{n},\{t_{k}\}\mathrm{.}$
\end{theorem}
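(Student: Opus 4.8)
The plan is to transfer everything to the sequence side via the $\varphi$-transform of Theorem \ref{phi-tran} and to feed in two ingredients: the rapid decay of $\varphi_{k}\ast\phi$ and of the pairings $\langle\psi_{k,m},\phi\rangle$ for $\phi\in\mathcal{S}_{\infty}(\mathbb{R}^{n})$, and the two-sided control of the numbers $t_{k,m}$ furnished by the membership $\{t_{k}\}\in\dot{X}_{\alpha,\sigma,p}$. Since Theorem \ref{phi-tran} gives $S_{\varphi}$ and $T_{\psi}$ bounded with $T_{\psi}\circ S_{\varphi}=\mathrm{id}$ on $\dot{A}_{p,q}(\mathbb{R}^{n},\{t_{k}\})$, it suffices for both embeddings and for the density to understand the sequence space $\dot{a}_{p,q}(\mathbb{R}^{n},\{t_{k}\})$ together with the decay properties of the analysing and synthesising molecules.

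For the left embedding, fix $\phi\in\mathcal{S}_{\infty}(\mathbb{R}^{n})$. Because $\mathcal{F}\varphi$ is supported in $\{1/2\le|\xi|\le2\}$ while $\mathcal{F}\phi$ vanishes to infinite order at the origin and decays rapidly, the identity $\mathcal{F}(\varphi_{k}\ast\phi)=\mathcal{F}\varphi(2^{-k}\cdot)\mathcal{F}\phi$ yields, for every $N,L\in\mathbb{N}$, a bound $|\varphi_{k}\ast\phi(x)|\le C_{N,L}\,2^{-|k|N}(1+|x|)^{-L}$ with $C_{N,L}$ dominated by a seminorm $\|\phi\|_{M}$ of $\mathcal{S}_{\infty}(\mathbb{R}^{n})$. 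Taking $j=k$ in \eqref{Asum1} and comparing the cubes $Q_{k,m}$ for different $m$ shows that $t_{k,m}=\|t_{k}|L_{p}(Q_{k,m})\|$ grows at most polynomially in $|m|$ and at most like $2^{c|k|}$ in $k$ for some $c=c(\alpha)$. Inserting the pointwise estimate into the equivalent quasi-norms of Proposition \ref{Equi-norm1} and choosing $N,L$ large enough to absorb the weight growth, the resulting double series in $k$ and $m$ converges and is dominated by a constant times a seminorm of $\phi$, which proves $\mathcal{S}_{\infty}(\mathbb{R}^{n})\hookrightarrow\dot{A}_{p,q}(\mathbb{R}^{n},\{t_{k}\})$.

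For the right embedding I would write, for $f\in\dot{A}_{p,q}(\mathbb{R}^{n},\{t_{k}\})$ and $\phi\in\mathcal{S}_{\infty}(\mathbb{R}^{n})$, the reproducing formula $f=T_{\psi}S_{\varphi}f$, so that $\langle f,\phi\rangle=\sum_{k,m}(S_{\varphi}f)_{k,m}\langle\psi_{k,m},\phi\rangle$. The molecules $\psi_{k,m}$ lie in $\mathcal{S}_{\infty}(\mathbb{R}^{n})$, and the same cancellation and decay argument gives $|\langle\psi_{k,m},\phi\rangle|\lesssim 2^{kn/2}\,2^{-|k|N}(1+|2^{-k}m|)^{-L}$ with constant controlled by a seminorm of $\phi$. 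Estimating $|\langle f,\phi\rangle|$ by Hölder's inequality against $\|S_{\varphi}f|\dot{a}_{p,q}\|\lesssim\|f|\dot{A}_{p,q}\|$ then requires the sequence $\{t_{k,m}^{-1}\langle\psi_{k,m},\phi\rangle\}$ to lie in the relevant dual sequence space; here the conditions \eqref{Asum1} (with $j=k$) supply the upper bound on $t_{k,m}^{-1}$ by a polynomial in $|m|$ times $2^{c|k|}$, which is absorbed upon choosing $N,L$ large. This is the step I expect to be the main obstacle: it is the only place where a reverse (lower) control of the weights is needed, and one must sum a genuinely two-parameter series in $(k,m)$ against weights that are controlled only in an averaged, Muckenhoupt-type sense.

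Finally, for the density of $\mathcal{S}_{\infty}(\mathbb{R}^{n})$ I would argue purely on the sequence side. Since $0<p<\infty$ and $0<q<\infty$, the finitely supported sequences are dense in $\dot{a}_{p,q}(\mathbb{R}^{n},\{t_{k}\})$; indeed, for $\lambda\in\dot{a}_{p,q}$ the truncations $\lambda^{(N)}:=\lambda\,\chi_{\{|k|\le N,\,|m|\le N\}}$ satisfy $\|\lambda-\lambda^{(N)}|\dot{a}_{p,q}\|\to0$ by dominated convergence applied to the equivalent quasi-norms of Proposition \ref{Equi-norm1}. Given $f\in\dot{A}_{p,q}(\mathbb{R}^{n},\{t_{k}\})$, put $\lambda:=S_{\varphi}f$, so that $T_{\psi}\lambda=f$. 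Each $T_{\psi}\lambda^{(N)}=\sum_{|k|,|m|\le N}\lambda^{(N)}_{k,m}\psi_{k,m}$ is a finite linear combination of functions $\psi_{k,m}\in\mathcal{S}_{\infty}(\mathbb{R}^{n})$, hence lies in $\mathcal{S}_{\infty}(\mathbb{R}^{n})$, and by the boundedness of $T_{\psi}$ from Theorem \ref{phi-tran},
\[
\big\|f-T_{\psi}\lambda^{(N)}|\dot{A}_{p,q}(\mathbb{R}^{n},\{t_{k}\})\big\|=\big\|T_{\psi}(\lambda-\lambda^{(N)})|\dot{A}_{p,q}(\mathbb{R}^{n},\{t_{k}\})\big\|\lesssim\big\|\lambda-\lambda^{(N)}|\dot{a}_{p,q}(\mathbb{R}^{n},\{t_{k}\})\big\|\to0,
\]
which establishes the asserted density.
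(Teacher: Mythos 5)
The paper does not actually contain a proof of this theorem: immediately after the statement it says ``The proof is given in \cite{D20} and \cite{D20.1}'', so there is no in-paper argument to compare yours against. Your plan is the standard Frazier--Jawerth route (reduce to the sequence spaces via Theorem \ref{phi-tran}, use decay of $\varphi_k\ast\phi$ and of $\langle\psi_{k,m},\phi\rangle$ for $\phi\in\mathcal{S}_{\infty}(\mathbb{R}^{n})$, and control the numbers $t_{k,m}$ through the class $\dot{X}_{\alpha,\sigma,p}$), which is almost certainly what the cited preprints do; the density argument via truncation of $S_{\varphi}f$ and boundedness of $T_{\psi}$ is complete and correct as written, granting that finitely supported sequences are dense when $p,q<\infty$ (dominated convergence in $L_p(\ell_q)$, as you say).

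The one place where your sketch leaves genuine work is the two-sided control of $t_{k,m}$, and your stated source for it is not quite sufficient. Taking $j=k$ in \eqref{Asum1} gives only the single-scale, Muckenhoupt-type inequality $M_{Q,p}(t_k)\,M_{Q,\sigma_1}(t_k^{-1})\le C_1$; by itself this yields (via H\"older, since $\tfrac{1}{p}+\tfrac{1}{\sigma_1}=\tfrac{1}{\theta}$, together with the doubling property of $A_{p/\theta}$ weights) the at-most-polynomial growth of $t_{k,m}$ and $t_{k,m}^{-1}$ in $|m|$ for \emph{fixed} $k$, but it says nothing about how these quantities vary with $k$. The claimed bound $2^{c|k|}$ requires the inter-scale inequalities in \eqref{Asum1} with $k\ne j$ (comparing $t_k$ to $t_0$ over a fixed cube, which is where $\alpha_1,\alpha_2$ enter), combined again with doubling to pass between the dyadic cube $Q_{k,m}$ and a unit cube containing it. This is exactly the estimate that makes both embeddings work --- after it, choosing $N,L$ large absorbs everything, as you indicate --- so it should be isolated and proved rather than asserted. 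Two smaller points: in the right embedding the set-theoretic inclusion is automatic (Definition \ref{B-F-def} already places $\dot{A}_{p,q}(\mathbb{R}^{n},\{t_k\})$ inside $\mathcal{S}_{\infty}'(\mathbb{R}^{n})$), so what must be proved is precisely the continuity estimate $|\langle f,\phi\rangle|\lesssim\|f|\dot{A}_{p,q}\|\,\|\phi\|_{M}$, for which your pairing formula needs the convergence of $\sum_{k,m}(S_{\varphi}f)_{k,m}\langle\psi_{k,m},\phi\rangle$ to be established as part of the same computation; and the normalization in your bound for $\langle\psi_{k,m},\phi\rangle$ should read $2^{-kn/2}$ rather than $2^{kn/2}$, though this is harmless since powers of $2^{kn}$ are absorbed into $2^{-|k|N}$.
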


The proof is given in {\cite{D20} and \cite{D20.1}. }Now we recall the
following spaces, see \cite{D20.2}.

\begin{definition}
\label{F-inf-def}Let $0<q<\infty $.\ Let $\{t_{k}\}$ be a $q$-admissible
weight sequence and $\varphi \in \mathcal{S}(\mathbb{R}^{n})$\ satisfy $%
\mathrm{\eqref{Ass1}}$ and $\mathrm{\eqref{Ass2}}$ and we put $\varphi
_{k}=2^{kn}\varphi (2^{k}\cdot )$. The Triebel-Lizorkin\ space $\dot{F}%
_{\infty ,q}(\mathbb{R}^{n},\{t_{k}\})$\ is the collection of all $f\in 
\mathcal{S}_{\infty }^{\prime }(\mathbb{R}^{n})$\ such that 
\begin{equation*}
{\big\|}f|\dot{F}_{\infty ,q}(\mathbb{R}^{n},\{t_{k}\}){\big\|}:=\sup_{P\in 
\mathcal{Q}}\Big(\frac{1}{|P|}\int_{P}\sum\limits_{k=-\log _{2}l(P)}^{\infty
}t_{k}^{q}(x)|\varphi _{k}\ast f(x)|^{q}dx\Big)^{\frac{1}{q}}<\infty .
\end{equation*}%
We define $\dot{f}_{\infty ,q}(\mathbb{R}^{n},\{t_{k}\})$, the sequence
space corresponding to\ $\dot{F}_{\infty ,q}(\mathbb{R}^{n},\{t_{k}\})$ as
follows.
\end{definition}

\begin{definition}
Let $0<q<\infty $ and $\{t_{k}\}$ be a $q$-admissible sequence. Then for all
complex valued sequences $\lambda =\{\lambda _{k,m}\}_{k\in \mathbb{Z},m\in 
\mathbb{Z}^{n}}\subset \mathbb{C}$ we define 
\begin{equation*}
\dot{f}_{\infty ,q}(\mathbb{R}^{n},\{t_{k}\}):=\Big\{\lambda :{\big\|}%
\lambda |\dot{f}_{\infty ,q}(\mathbb{R}^{n},\{t_{k}\}){\big\|}<\infty \Big\},
\end{equation*}%
where%
\begin{equation}
{\big\|}\lambda |\dot{f}_{\infty ,q}(\mathbb{R}^{n},\{t_{k}\}){\big\|:}%
=\sup_{P\in \mathcal{Q}}\Big(\frac{1}{|P|}\int_{P}\sum\limits_{k=-\log
_{2}l(P)}^{\infty }\sum\limits_{m\in \mathbb{Z}^{n}}2^{\frac{knq}{2}%
}t_{k}^{q}(x)|\lambda _{k,m}|^{q}\chi _{k,m}(x)dx\Big)^{\frac{1}{q}}.
\label{norm}
\end{equation}
\end{definition}

The quasi-norm $\mathrm{\eqref{norm}}$ can be rewritten as follows:

\begin{proposition}
Let $0<\theta \leq q<\infty $. Let $\{t_{k}\}$ be a $q$-admissible sequence.
Then 
\begin{equation*}
\big\|\lambda |\dot{f}_{\infty ,q}(\mathbb{R}^{n},\{t_{k}\})\big\|%
=\sup_{P\in \mathcal{Q}}\Big(\frac{1}{|P|}\int_{P}\sum\limits_{k=-\log
_{2}l(P)}^{\infty }\sum\limits_{m\in \mathbb{Z}^{n}}2^{knq(\frac{1}{2}+\frac{%
1}{q})}t_{k,m,q}^{q}|\lambda _{k,m}|^{q}\chi _{k,m}(x)dx\Big)^{\frac{1}{q}},
\end{equation*}%
where%
\begin{equation*}
t_{k,m,q}:=\big\|t_{k}|L_{q}(Q_{k,m})\big\|,\quad k\in \mathbb{Z},m\in 
\mathbb{Z}^{n}.
\end{equation*}
\end{proposition}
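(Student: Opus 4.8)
The plan is to prove the identity cube by cube: for each fixed dyadic cube $P\in\mathcal{Q}$ I will show that the two inner expressions (before taking the supremum over $P$ and the $1/q$-th power) are equal, after which the claim follows by taking $\sup_{P\in\mathcal{Q}}$ and raising to the power $1/q$. Concretely, writing
\[
I_P:=\frac{1}{|P|}\int_P\sum_{k=-\log_2 l(P)}^\infty\sum_{m\in\mathbb{Z}^n}2^{\frac{knq}{2}}t_k^q(x)|\lambda_{k,m}|^q\chi_{k,m}(x)\,dx
\]
for the quantity coming from \eqref{norm}, and letting $J_P$ denote the analogous quantity in which the summand $2^{\frac{knq}{2}}t_k^q(x)|\lambda_{k,m}|^q$ is replaced by $2^{knq(\frac12+\frac1q)}t_{k,m,q}^q|\lambda_{k,m}|^q$, the goal is $I_P=J_P$.

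Two elementary facts drive the computation. First, by the very definition of $t_{k,m,q}$ one has $t_{k,m,q}^q=\int_{Q_{k,m}}t_k^q(y)\,dy$, while $|Q_{k,m}|=2^{-kn}$; hence $2^{kn}t_{k,m,q}^q$ is exactly the $q$-average of $t_k^q$ over $Q_{k,m}$. Second, since $P$ is dyadic and the inner sum runs only over $k\geq-\log_2 l(P)$, every cube $Q_{k,m}$ occurring has side length $2^{-k}\leq l(P)$; the dyadic dichotomy then guarantees that each such $Q_{k,m}$ is either contained in $P$ or disjoint from $P$. Consequently $\int_P\chi_{k,m}\,dx=|P\cap Q_{k,m}|$ equals $2^{-kn}$ when $Q_{k,m}\subseteq P$ and vanishes otherwise.

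Carrying out the integration, in $I_P$ the $x$-dependence sits in $t_k^q(x)$; for fixed $k$ the cubes $\{Q_{k,m}\}_m$ are pairwise disjoint, so only the indices $m$ with $Q_{k,m}\subseteq P$ contribute and $\int_{Q_{k,m}}t_k^q(x)\,dx=t_{k,m,q}^q$. This reduces $I_P$ to $\frac{1}{|P|}\sum_{k\geq-\log_2 l(P)}2^{\frac{knq}{2}}\sum_{m:\,Q_{k,m}\subseteq P}t_{k,m,q}^q|\lambda_{k,m}|^q$. In $J_P$ the integrand is constant in $x$ on each $Q_{k,m}$, so integrating $\chi_{k,m}$ simply produces the volume factor $2^{-kn}$; combining this with the exponent arithmetic $2^{knq(\frac12+\frac1q)}=2^{\frac{knq}{2}}\cdot 2^{kn}$ gives $2^{knq(\frac12+\frac1q)}\cdot 2^{-kn}=2^{\frac{knq}{2}}$, so $J_P$ collapses to exactly the same expression. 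Thus $I_P=J_P$ for every $P\in\mathcal{Q}$, and the proposition follows.

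I do not anticipate a substantive obstacle: the argument is a direct computation, and the only points demanding care are the exponent bookkeeping $2^{knq(\frac12+\frac1q)}\cdot 2^{-kn}=2^{\frac{knq}{2}}$ and the dyadic nesting dichotomy, which is precisely what allows the pointwise weight $t_k^q(x)$ to be exchanged with its cube-average under the integral over $P$. The hypothesis $0<\theta\leq q$ is not used beyond the $q$-admissibility of $\{t_k\}$, which ensures each $t_{k,m,q}$ is finite.
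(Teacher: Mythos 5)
Your computation is correct: the dyadic dichotomy (each $Q_{k,m}$ with $2^{-k}\leq l(P)$ is either contained in $P$ or disjoint from it), the identity $t_{k,m,q}^{q}=\int_{Q_{k,m}}t_{k}^{q}(y)\,dy$, and the exponent bookkeeping $2^{knq(\frac12+\frac1q)}\cdot|Q_{k,m}|=2^{\frac{knq}{2}}$ together give $I_{P}=J_{P}$ exactly, and Tonelli justifies the interchange of sum and integral since all terms are nonnegative. The paper states this proposition without proof, treating it as a routine rewriting, and your argument is precisely the direct computation it implicitly relies on; you are also right that the hypothesis $0<\theta\leq q$ plays no role here beyond $q$-admissibility.
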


\begin{proof}
Let $Q_{k,m}\subset P\in \mathcal{Q},k\in \mathbb{Z},m\in \mathbb{Z}^{n}$.
The claim is a simple consequence of the fact that%
\begin{align*}
& \int_{P}\sum\limits_{k=-\log _{2}l(P)}^{\infty }\sum\limits_{m\in \mathbb{Z%
}^{n}}2^{\frac{knq}{2}}t_{k}^{q}(x)|\lambda _{k,m}|^{q}\chi _{k,m}(x)dx \\
=& \sum\limits_{k=-\log _{2}l(P)}^{\infty }\sum\limits_{m\in \mathbb{Z}%
^{n}}2^{\frac{knq}{2}}|\lambda _{k,m}|^{q}\int_{P}t_{k}^{q}(x)\chi
_{k,m}(x)dx \\
=& \sum\limits_{k=-\log _{2}l(P)}^{\infty }\sum\limits_{m\in \mathbb{Z}%
^{n}}2^{\frac{knq}{2}}|\lambda _{k,m}|^{q}t_{k,m,q}^{q} \\
=& |Q_{k,m}|^{-1}\int_{Q_{k,m}}\sum\limits_{k=-\log _{2}l(P)}^{\infty
}\sum\limits_{m\in \mathbb{Z}^{n}}2^{\frac{knq}{2}}|\lambda
_{k,m}|^{q}t_{k,m,q}^{q}dx \\
=& \int_{P}\sum\limits_{k=-\log _{2}l(P)}^{\infty }\sum\limits_{m\in \mathbb{%
Z}^{n}}2^{knq(\frac{1}{2}+\frac{1}{q})}|\lambda _{k,m}|^{q}t_{k,m,q}^{q}\chi
_{k,m}(x)dx.
\end{align*}
\end{proof}

We have, see \cite{FJ90}, 
\begin{equation*}
\dot{F}_{\infty ,2}(\mathbb{R}^{n},\{1\})=BMO(\mathbb{R}^{n}).
\end{equation*}%
Notice that Theorem \ref{phi-tran} is true for the spaces $\dot{F}_{\infty
,q}(\mathbb{R}^{n},\{t_{k}\})$ and $\dot{f}_{\infty ,q}(\mathbb{R}%
^{n},\{t_{k}\})$, see \cite{D20.2}. More precisely:

\begin{theorem}
\label{phi-tran2}Let $0<\theta \leq q<\infty $. Let $\{t_{k}\}\in \dot{X}%
_{\alpha ,\sigma ,q}$ be a $q$-admissible weight sequence with $\sigma
=(\sigma _{1}=\theta \left( \frac{q}{\theta }\right) ^{\prime },\sigma
_{2}\geq q)$. Let $\varphi $, $\psi \in \mathcal{S}(\mathbb{R}^{n})$
satisfying $\mathrm{\eqref{Ass1}}$\ through\ $\mathrm{\eqref{Ass3}}$. The
operators 
\begin{equation*}
S_{\varphi }:\dot{F}_{\infty ,q}(\mathbb{R}^{n},\{t_{k}\})\rightarrow \dot{f}%
_{\infty ,q}(\mathbb{R}^{n},\{t_{k}\})
\end{equation*}%
and 
\begin{equation*}
T_{\psi }:\dot{f}_{\infty ,q}(\mathbb{R}^{n},\{t_{k}\})\rightarrow \dot{F}%
_{\infty ,q}(\mathbb{R}^{n},\{t_{k}\})
\end{equation*}%
are bounded. Furthermore, $T_{\psi }\circ S_{\varphi }$ is the identity on $%
\dot{F}_{\infty ,q}(\mathbb{R}^{n},\{t_{k}\})$.
\end{theorem}

\begin{corollary}
Let $0<\theta \leq q<\infty $. Let $\{t_{k}\}\in \dot{X}_{\alpha ,\sigma ,q}$
be a $q$-admissible weight sequence with $\sigma =(\sigma _{1}=\theta \left( 
\frac{q}{\theta }\right) ^{\prime },\sigma _{2}\geq q)$. The definition of
the spaces $\dot{F}_{\infty ,q}(\mathbb{R}^{n},\{t_{k}\})$ is independent of
the choices of $\varphi \in \mathcal{S}(\mathbb{R}^{n})$ satisfying $\mathrm{%
\eqref{Ass1}}$\ through\ $\mathrm{\eqref{Ass2}}$.
\end{corollary}

As in \cite{FJ90} we obtain the following statement.

\begin{proposition}
\label{norm-equiv11}Let $0<\theta \leq q<\infty $. Let $\{t_{k}\}$ be a $q$%
-admissible weight sequence satisfying $\mathrm{\eqref{Asum1}}$ with $\sigma
_{1}=\theta \left( \frac{q}{\theta }\right) ^{\prime }$, $p=q$ and $j=k$%
\textit{. Then }$\lambda =\{\lambda _{k,m}\}_{k\in \mathbb{Z},m\in \mathbb{Z}%
^{n}}\in \dot{f}_{\infty ,q}(\mathbb{R}^{n},\{t_{k}\})$ \textit{if and only
if for each \ dyadic cube }$Q_{k,m}$ there is a subset $E_{Q_{k,m}}\subset
Q_{k,m}$ with $|E_{Q_{k,m}}|>|Q_{k,m}|/2$ (or any other, fixed, number $%
0<\varepsilon <1$) such that%
\begin{equation*}
\Big\|\Big(\sum_{k=-\infty }^{\infty }\sum\limits_{m\in \mathbb{Z}%
^{n}}2^{knq(\frac{1}{2}+\frac{1}{q})}t_{k,m,q}^{q}|\lambda _{k,m}|^{q}\chi
_{E_{Q_{k,m}}}\Big)^{1/q}|L_{\infty }(\mathbb{R}^{n})\Big\|<\infty .
\end{equation*}%
Moreover, the infimum of this expression over all such collections $%
\{E_{Q_{k,m}}\}_{k,m}$ is equivalent to ${\big\|}\lambda |\dot{f}_{\infty
,q}(\mathbb{R}^{n},\{t_{k}\}){\big\|}$.
\end{proposition}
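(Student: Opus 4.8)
The plan is to strip the statement down to a purely combinatorial Carleson-type equivalence for nonnegative coefficients and then to construct the exceptional sets by a dyadic stopping-time argument. First I would apply the rewriting of the quasi-norm in the preceding Proposition, which is the only place the hypothesis on $\{t_k\}$ (condition \eqref{Asum1} with $\sigma_1=\theta(\frac{q}{\theta})'$, $p=q$, $j=k$) is needed, to replace $\big\|\lambda|\dot{f}_{\infty,q}(\mathbb{R}^n,\{t_k\})\big\|$ by the expression involving $t_{k,m,q}$. Setting $a_{k,m}:=2^{knq(\frac12+\frac1q)}t_{k,m,q}^{q}|\lambda_{k,m}|^{q}\ge 0$ and noting that, for a dyadic cube $P\in\mathcal{Q}$, the constraint $k\ge-\log_2 l(P)$ together with $\int_P\chi_{k,m}=|Q_{k,m}|$ keeps exactly the cubes $Q_{k,m}\subseteq P$, the quasi-norm becomes $\mathcal{C}:=\sup_{P\in\mathcal{Q}}|P|^{-1}\sum_{Q_{k,m}\subseteq P}a_{k,m}|Q_{k,m}|$, the Carleson constant of $(a_{k,m}|Q_{k,m}|)$, while the $E$-expression is $\big(\operatorname{ess\,sup}_x\sum_{k,m:\,x\in E_{Q_{k,m}}}a_{k,m}\big)^{1/q}=:N$. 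Thus it suffices to show $\mathcal{C}\approx\inf_{\{E\}}N^{q}$, the infimum being over all admissible families of subsets.

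One inclusion is immediate: for any family with $|E_{Q}|>|Q|/2$ and every dyadic $P$, since $a_Q$ is constant on $Q$,
\[
\sum_{Q\subseteq P}a_{Q}|Q|\le 2\sum_{Q\subseteq P}a_{Q}|E_{Q}|=2\int_P\sum_{Q\subseteq P}a_{Q}\chi_{E_{Q}}\,dx\le 2|P|\,N^{q},
\]
so $\mathcal{C}\le 2N^{q}$ and $\big\|\lambda|\dot{f}_{\infty,q}\big\|\le 2^{1/q}\inf N$. For the converse I would normalize $\mathcal{C}=1$, fix a constant $A>2$, and for each dyadic cube $R$ let $\mathrm{stop}(R)$ be the maximal dyadic cubes $S\subsetneq R$ with $\sum_{S\subseteq Q\subseteq R}a_{Q}>A$, putting $E_{R}:=R\setminus\bigcup_{S\in\mathrm{stop}(R)}S$. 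The measure bound follows from Carleson: since $A|S|<\sum_{S\subseteq Q\subseteq R}a_{Q}|S|$ and $\sum_{S\subseteq Q}|S|\le|Q|$ (the $S$ are disjoint),
\[
A\sum_{S\in\mathrm{stop}(R)}|S|<\sum_{Q\subseteq R}a_{Q}\sum_{S\subseteq Q}|S|\le\sum_{Q\subseteq R}a_{Q}|Q|\le|R|,
\]
whence $|E_{R}|\ge(1-\tfrac1A)|R|>|R|/2$.

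For the pointwise bound I would first verify the key equivalence $x\in E_{R}\iff V(x,R):=\sum_{x\in Q\subseteq R}a_{Q}\le A$: if the vertical sum from $x$ up to $R$ exceeds $A$, a sufficiently small cube containing $x$ lies in some stopping cube, and conversely. Writing $R^{(i)}$ for the dyadic cube of side $2^{-i}$ containing $x$ and $V_i:=V(x,R^{(i)})=\sum_{l\ge i}a_{R^{(l)}}$, which is nonincreasing in $i$, the retained sum then telescopes:
\[
\sum_{R\ni x,\ x\in E_{R}}a_{R}=\sum_{i:\,V_i\le A}a_{R^{(i)}}=\sum_{i\ge i_0}a_{R^{(i)}}=V_{i_0}\le A,
\]
where $i_0$ is the first index with $V_{i}\le A$ (the sum being $0$ if no such $i$ exists). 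Hence $N^{q}\le A=2\mathcal{C}$, and taking $A=1/\varepsilon$ after normalization gives $|E_{R}|>(1-\varepsilon)|R|$ for any prescribed $0<\varepsilon<1$ at the cost of $N^{q}\le\mathcal{C}/\varepsilon$, which accounts for the parenthetical clause. The main obstacle is exactly this converse: recognizing that the membership hypothesis is the Carleson condition, and then choosing the stopping threshold so that the measure estimate and the telescoping identity $\sum_{x\in E_R}a_R=V_{i_0}$ hold simultaneously; once the stopping family is in place, both estimates are short.
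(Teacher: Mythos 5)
Your argument is correct, and it is essentially the argument the paper relies on: the paper gives no proof of this proposition, only the remark ``As in [FJ90]'', and the Frazier--Jawerth proof is exactly your reduction to a Carleson-type condition for the coefficients $a_{k,m}$ followed by the stopping-time construction of the sets $E_{Q}$ with the measure bound from the Carleson packing inequality and the telescoping pointwise bound. (A minor quibble: the hypothesis \eqref{Asum1} is not actually what makes the preceding proposition work --- that rewriting is an unconditional identity since $2^{kn}\int_{P}\chi_{k,m}=1$ for $Q_{k,m}\subseteq P$ --- so your proof in fact needs no assumption on $\{t_{k}\}$ beyond $q$-admissibility, which only strengthens the result.)
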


\section{Complex interpolation}

In this section we study complex interpolation of the above function spaces
using Calder\'{o}n product\ method. We follow the approach of Frazier and
Jawerth \cite{FJ90}, see also \cite{SSV13}. We start by defining the Calder%
\'{o}n product of two quasi-Banach lattices. Let $(\mathcal{A},S,%
\mu
)$ be a $\sigma $-finite measure space and let $\mathfrak{M}$ be the class
of all complex-valued, $\mu $-measurable functions on $\mathcal{A}$. Then a
quasi-Banach space $X\subset \mathfrak{M}$ is called a quasi-Banach lattice
of functions if for every $f\in X$ and $g\in \mathfrak{M}$ with $|g(x)|\leq
|f(x)|$ for $\mu $-a.e. $x\in X$ one has $g\in X$ and $\left\Vert
g\right\Vert _{X}\leq \left\Vert f\right\Vert _{X}$.

\begin{definition}
Let $(\mathcal{A},S,%
\mu
)$ be a $\sigma $-finite measure space and let $\mathfrak{M}$ be the class
of all complex-valued, $\mu $-measurable functions on $\mathcal{A}$. Suppose
that $X_{0}$ and $X_{1}$ are quasi-Banach lattices on $\mathfrak{M}$. Given $%
0<\theta <1$, define the Calder\'{o}n product $X_{0}^{1-\theta }\cdot
X_{1}^{\theta }$ as the collection of all functions $f\in \mathfrak{M}$ such
that%
\begin{equation*}
\left\Vert f\right\Vert _{X_{0}^{1-\theta }\cdot X_{1}^{\theta }}:=\inf %
\big\{\left\Vert g\right\Vert _{X_{0}}^{1-\theta }\left\Vert h\right\Vert
_{X_{1}}^{\theta }:|f|\leq |g|^{1-\theta }|h|^{\theta },\mu \text{-}a.e.,\
g\in X_{0},\ h\in X_{1}\big\}<\infty .
\end{equation*}
\end{definition}

\begin{remark}
Calder\'{o}n products have been introduced by Calder\'{o}n \cite{Ca64} (in a
little bit different form which coincides with the above one). Further
properties we refer to, Frazier and Jawerth \cite{FJ90} and Yang, Yuan and
Zhuo \cite{YYZ13}.
\end{remark}

We need a few useful properties, see \cite{YYZ13}.

\begin{lemma}
Let $(\mathcal{A},S,%
\mu
)$ be a $\sigma $-finite measure space and let $\mathfrak{M}$ be the class
of all complex-valued, $\mu $-measurable functions on $\mathcal{A}$. Suppose
that $X_{0}$ and $X_{1}$ are quasi-Banach lattices on $\mathfrak{M}$. Let $%
0<\theta <1$.

$\mathrm{(i)}$ Then the Calder\'{o}n product $X_{0}^{1-\theta }\cdot
X_{1}^{\theta }$ is a quasi-Banach space.

$\mathrm{(ii)}$ Define the Calder\'{o}n product $\widetilde{X_{0}^{1-\theta
}\cdot X_{1}^{\theta }}$ as the collection of all functions $f\in \mathfrak{M%
}$ such that there exist a positive real number $M$ and $g\in X_{0}$ and $%
h\in X_{1}$ satisfying%
\begin{equation*}
|f|\leq M|g|^{1-\theta }|h|^{\theta },\quad \left\Vert g\right\Vert
_{X_{0}}\leq 1,\quad \left\Vert h\right\Vert _{X_{1}}\leq 1.
\end{equation*}%
We put%
\begin{equation*}
\left\Vert f\right\Vert _{\widetilde{X_{0}^{1-\theta }\cdot X_{1}^{\theta }}%
}:=\inf \big\{M>0:|f|\leq M|g|^{1-\theta }|h|^{\theta },\ \left\Vert
g\right\Vert _{X_{0}}\leq 1,\ \left\Vert h\right\Vert _{X_{1}}\leq 1\big\}.
\end{equation*}%
Then $\widetilde{X_{0}^{1-\theta }\cdot X_{1}^{\theta }}=X_{0}^{1-\theta
}\cdot X_{1}^{\theta }$ follows with equality of quasi-norms.
\end{lemma}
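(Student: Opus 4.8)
The plan is to dispatch part (ii) first, since it is the softer statement and the normalization it provides is reused in part (i). Given $f\in X_{0}^{1-\theta}\cdot X_{1}^{\theta}$ with a factorization $|f|\le|g|^{1-\theta}|h|^{\theta}$, $g\in X_{0}$, $h\in X_{1}$ (both nonzero, otherwise $f=0$ $\mu$-a.e.\ and there is nothing to prove), I would normalize by setting $\bar{g}:=g/\|g\|_{X_{0}}$ and $\bar{h}:=h/\|h\|_{X_{1}}$, so that $\|\bar{g}\|_{X_{0}}=\|\bar{h}\|_{X_{1}}=1$ and $|f|\le M|\bar{g}|^{1-\theta}|\bar{h}|^{\theta}$ with $M:=\|g\|_{X_{0}}^{1-\theta}\|h\|_{X_{1}}^{\theta}$. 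This exhibits $f\in\widetilde{X_{0}^{1-\theta}\cdot X_{1}^{\theta}}$ with tilde-quasi-norm at most $M$, and taking the infimum over all factorizations gives one inequality. Conversely, from $|f|\le M|g|^{1-\theta}|h|^{\theta}$ with $\|g\|_{X_{0}}\le1$, $\|h\|_{X_{1}}\le1$, I would rescale by $g':=Mg$ and $h':=Mh$; using $M=M^{1-\theta}M^{\theta}$ this gives $|f|\le|g'|^{1-\theta}|h'|^{\theta}$ with $\|g'\|_{X_{0}}^{1-\theta}\|h'\|_{X_{1}}^{\theta}\le M$, and the infimum yields the reverse inequality. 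Hence the two spaces coincide with equality of quasi-norms.

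For part (i) I would first verify the quasi-norm axioms. Positive homogeneity follows by splitting $|c|=|c|^{1-\theta}|c|^{\theta}$ and rescaling the two factors of any factorization. For the quasi-triangle inequality, starting from near-optimal factorizations $|f_{i}|\le|g_{i}|^{1-\theta}|h_{i}|^{\theta}$, $i=1,2$, the key elementary tool is the two-term Hölder inequality
\[
a^{1-\theta}b^{\theta}+c^{1-\theta}d^{\theta}\le(a+c)^{1-\theta}(b+d)^{\theta},\qquad a,b,c,d\ge0,
\]
(with exponents $\tfrac{1}{1-\theta}$ and $\tfrac{1}{\theta}$), which yields $|f_{1}+f_{2}|\le(|g_{1}|+|g_{2}|)^{1-\theta}(|h_{1}|+|h_{2}|)^{\theta}$. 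Applying lattice monotonicity and the quasi-triangle inequalities of $X_{0},X_{1}$ (with constants $C_{0},C_{1}$) then bounds $\|f_{1}+f_{2}\|$ by $(C_{0}(\|g_{1}\|_{X_{0}}+\|g_{2}\|_{X_{0}}))^{1-\theta}(C_{1}(\|h_{1}\|_{X_{1}}+\|h_{2}\|_{X_{1}}))^{\theta}$, and optimizing over the factorizations produces a finite quasi-triangle constant. Finally, definiteness is obtained by noting that $\|f\|=0$ forces factorizations with $\|g_{n}\|_{X_{0}}^{1-\theta}\|h_{n}\|_{X_{1}}^{\theta}\to0$; after balancing the scales so that $\|g_{n}\|_{X_{0}}=\|h_{n}\|_{X_{1}}$, both factor norms tend to $0$, and the lattice property forces $f=0$ $\mu$-a.e.

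The hard part will be completeness. Here I would invoke the Aoki--Rolewicz theorem to pass to an equivalent $\rho$-norm ($0<\rho\le1$, chosen at most $\min$ of the corresponding exponents of $X_{0}$ and $X_{1}$), reducing completeness to showing that every series $\sum_{j}f_{j}$ with $\sum_{j}\|f_{j}\|^{\rho}<\infty$ converges in $X_{0}^{1-\theta}\cdot X_{1}^{\theta}$. Using part (ii), I would choose for each $j$ a factorization $|f_{j}|\le M_{j}|g_{j}|^{1-\theta}|h_{j}|^{\theta}$ with $\|g_{j}\|_{X_{0}}\le1$, $\|h_{j}\|_{X_{1}}\le1$ and $M_{j}$ comparable to $\|f_{j}\|$. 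The crux is to assemble a single factorization of the tail sums: the series form of the Hölder inequality above gives
\[
\sum_{j\ge N}|f_{j}|\le\Big(\sum_{j\ge N}M_{j}|g_{j}|\Big)^{1-\theta}\Big(\sum_{j\ge N}M_{j}|h_{j}|\Big)^{\theta},
\]
and, since $\sum_{j}M_{j}^{\rho}<\infty$ and $\|g_{j}\|_{X_{0}},\|h_{j}\|_{X_{1}}\le1$, completeness of $X_{0}$ and $X_{1}$ guarantees that $\sum_{j}M_{j}|g_{j}|\in X_{0}$ and $\sum_{j}M_{j}|h_{j}|\in X_{1}$, with both tails small in norm for large $N$. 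This furnishes the limit together with the tail estimate, proving that the partial sums are Cauchy and hence completeness. I expect this simultaneous control of the two factors in $X_{0}$ and in $X_{1}$---producing one common factorization for the whole series---to be the principal technical obstacle; the remaining points are routine. These facts are also recorded in \cite{YYZ13}.
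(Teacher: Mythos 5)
First, on the comparison itself: the paper does not prove this lemma; it is quoted from the literature (``see \cite{YYZ13}''), so your argument is measured against the cited proof, which it essentially reproduces. Part (ii) is correct and complete as written: the two rescalings $\bar{g}=g/\Vert g\Vert_{X_{0}}$, $\bar{h}=h/\Vert h\Vert_{X_{1}}$ and $g'=Mg$, $h'=Mh$ give the two inequalities between the quasi-norms. For part (i), homogeneity and the quasi-triangle inequality are fine in substance, with one caveat: the displayed bound $(C_{0}(\Vert g_{1}\Vert_{X_{0}}+\Vert g_{2}\Vert_{X_{0}}))^{1-\theta}(C_{1}(\Vert h_{1}\Vert_{X_{1}}+\Vert h_{2}\Vert_{X_{1}}))^{\theta}$ does \emph{not} optimize to $C(\Vert f_{1}\Vert+\Vert f_{2}\Vert)$ over arbitrary near-optimal factorizations, because H\"older's inequality runs the wrong way there (take $\Vert g_{1}\Vert_{X_{0}}=\Vert h_{2}\Vert_{X_{1}}=1$ and $\Vert h_{1}\Vert_{X_{1}}=\Vert g_{2}\Vert_{X_{0}}=\epsilon$). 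You must first normalize via part (ii), writing $|f_{i}|\le (M_{i}|g_{i}|)^{1-\theta}(M_{i}|h_{i}|)^{\theta}$ with $\Vert g_{i}\Vert_{X_{0}}\le 1$, $\Vert h_{i}\Vert_{X_{1}}\le 1$ and $M_{i}\le (1+\epsilon)\Vert f_{i}\Vert$; then your two-term H\"older inequality yields the factorization $(M_{1}|g_{1}|+M_{2}|g_{2}|,\,M_{1}|h_{1}|+M_{2}|h_{2}|)$ with norms $\le C_{i}(M_{1}+M_{2})$. You clearly intend this, since you announce that the normalization of (ii) is reused, but the bound as displayed is the unnormalized one.

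The genuine soft spot is the pair of places where you pass from norm convergence in $X_{0}$, $X_{1}$ to a pointwise a.e.\ statement. In the completeness argument you need not merely that the partial sums of $\sum_{j}M_{j}|g_{j}|$ are Cauchy in $X_{0}$ (which Aoki--Rolewicz and $\sum_{j}M_{j}^{\rho}<\infty$ do give), but that the series converges $\mu$-a.e.\ to a function $G\in X_{0}$ satisfying $G\ge \sum_{j\ge N}M_{j}|g_{j}|$ pointwise, since it is this pointwise majorant that enters the factorization of the tail. Likewise, in the definiteness argument, $|f|\le |g_{n}|^{1-\theta}|h_{n}|^{\theta}$ with $\Vert g_{n}\Vert_{X_{0}}=\Vert h_{n}\Vert_{X_{1}}\to 0$ gives, via Young's inequality, only that $f$ has arbitrarily small quasi-norm in $X_{0}+X_{1}$; ``the lattice property'' alone does not force $f=0$ a.e., because norm smallness in an abstract quasi-Banach lattice does not a priori control pointwise values. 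Both steps are rescued by the Riesz--Fischer property of complete quasi-normed ideal spaces (a $\rho$-summable series of nonnegative functions converges a.e.\ and its sum lies in the space, with the a.e.\ sum dominating the partial sums) --- equivalently, by the continuity of the embedding into measurable functions with local convergence in measure --- but this is precisely the nontrivial content of the lemma, and it should be proved or cited precisely (e.g.\ from \cite{KMM07} or \cite{YYZ13}) rather than absorbed into the phrase ``completeness of $X_{0}$ and $X_{1}$ guarantees''.
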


In the sequel we will need the following lemma:

\begin{lemma}
\label{Holder}Let $0<\theta <1,0<1-\theta <q_{0}<\infty ,0<\varrho
<q_{1}<\infty $ and $Q$ be a cube. Let $\{t_{k}\}\subset L_{q_{0}}^{\mathrm{%
loc}}$ and $\{w_{k}\}\subset L_{q_{1}}^{\mathrm{loc}}$ be two\ weight
sequences satisfying $\mathrm{\eqref{Asum1}}$ with $(\sigma
_{1},p,j)=((1-\theta )(\frac{q_{0}}{1-\theta })^{\prime },p=q_{0},k)$\ and $%
(\sigma _{1},p,j)=(\theta (\frac{q_{1}}{\theta })^{\prime },p=q_{1},k)$,
respectively. We put%
\begin{equation*}
\frac{1}{q}:=\frac{1-\theta }{q_{0}}+\frac{\theta }{q_{1}}\text{\quad
and\quad }\omega _{k}:=t_{k}^{1-\theta }w_{k}^{\theta },\quad k\in \mathbb{Z}%
.
\end{equation*}%
Then%
\begin{equation*}
\Big(\int_{E}\omega _{k}^{q}(x)dx\Big)^{\frac{1}{q}}\approx \Big(%
\int_{E}t_{k}^{q_{0}}(x)dx\Big)^{\frac{1-\theta }{q_{0}}}\Big(%
\int_{E}w_{k}^{q_{1}}(x)dx\Big)^{\frac{\theta }{q_{1}}}
\end{equation*}%
for any $E\subset Q$ such that $|E|\geq \varepsilon |Q|$, with $\varepsilon
>0$.
\end{lemma}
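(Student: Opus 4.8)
The plan is to prove the two one-sided estimates separately. The bound ``$\lesssim$'' is elementary, whereas the reverse bound ``$\gtrsim$'' carries all the content and is where the Muckenhoupt structure encoded in the hypotheses must be used. Write $\mu:=\frac{(1-\theta)q}{q_{0}}$ and $\nu:=\frac{\theta q}{q_{1}}$; the definition of $q$ gives $\mu+\nu=1$, and since $\omega_{k}^{q}=(t_{k}^{q_{0}})^{\mu}(w_{k}^{q_{1}})^{\nu}$, H\"{o}lder's inequality with exponents $1/\mu$ and $1/\nu$ yields
\[
\int_{E}\omega_{k}^{q}\le\Big(\int_{E}t_{k}^{q_{0}}\Big)^{\mu}\Big(\int_{E}w_{k}^{q_{1}}\Big)^{\nu},
\]
which is one half of the asserted equivalence; no hypothesis beyond local integrability is needed here and the constant is $1$.

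Next I would record the Muckenhoupt memberships furnished by the hypotheses. By part~(i) of the remark following Definition~\ref{Tyulenev-class}, the condition on $\{t_{k}\}$ (with $r=1-\theta$, $p=q_{0}$) gives $t_{k}^{q_{0}}\in A_{q_{0}/(1-\theta)}(\mathbb{R}^{n})$, and the condition on $\{w_{k}\}$ (with $r=\theta$, $p=q_{1}$) gives $w_{k}^{q_{1}}\in A_{q_{1}/\theta}(\mathbb{R}^{n})$; the corresponding constants are uniform in $k$ because $C_{1}$ in \eqref{Asum1} is. In particular $t_{k}^{q_{0}},w_{k}^{q_{1}}\in A_{\infty}(\mathbb{R}^{n})$ uniformly in $k$. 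I would also verify that the product sequence $\{\omega_{k}\}$ itself satisfies \eqref{Asum1} with $p=q$, $j=k$, and exponent $\sigma_{1}$ given by $\frac{1}{\sigma_{1}}=\frac{1-\theta}{\sigma_{1}^{(0)}}+\frac{\theta}{\sigma_{1}^{(1)}}$, where $\sigma_{1}^{(0)}=(1-\theta)(\frac{q_{0}}{1-\theta})^{\prime}$ and $\sigma_{1}^{(1)}=\theta(\frac{q_{1}}{\theta})^{\prime}$ are the two exponents in the hypotheses: this is two further applications of H\"{o}lder's inequality, one to $M_{Q,q}(\omega_{k})$ and one to $M_{Q,\sigma_{1}}(\omega_{k}^{-1})$. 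Since this $\sigma_{1}$ is of the form $r(q/r)^{\prime}$ for a suitable $r\in(0,q)$, the same remark yields $\omega_{k}^{q}\in A_{q/r}(\mathbb{R}^{n})\subset A_{\infty}(\mathbb{R}^{n})$, again uniformly in $k$.

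With these facts the reverse bound follows in two moves. The core is a reverse H\"{o}lder inequality on the whole cube,
\[
\int_{Q}(t_{k}^{q_{0}})^{\mu}(w_{k}^{q_{1}})^{\nu}\gtrsim\Big(\int_{Q}t_{k}^{q_{0}}\Big)^{\mu}\Big(\int_{Q}w_{k}^{q_{1}}\Big)^{\nu}.
\]
I would deduce it from Jensen's inequality, which bounds $\frac{1}{|Q|}\int_{Q}(t_{k}^{q_{0}})^{\mu}(w_{k}^{q_{1}})^{\nu}$ from below by $\exp\big(\frac{1}{|Q|}\int_{Q}[\mu\log t_{k}^{q_{0}}+\nu\log w_{k}^{q_{1}}]\big)$, combined with the exponential ($A_{\infty}$) characterization $\exp\big(\frac{1}{|Q|}\int_{Q}\log\gamma\big)\gtrsim\frac{1}{|Q|}\int_{Q}\gamma$ applied to $\gamma=t_{k}^{q_{0}}$ and $\gamma=w_{k}^{q_{1}}$ (see, e.g., \cite{GR85}); the uniform $A_{\infty}$ constants keep the implied constant independent of $k$. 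The second move passes from $Q$ to $E$: since $|E|\ge\varepsilon|Q|$ and $\omega_{k}^{q}\in A_{\infty}$ uniformly, the comparability of an $A_{\infty}$ measure on sets of comparable Lebesgue measure gives $\int_{Q}\omega_{k}^{q}\lesssim\int_{E}\omega_{k}^{q}$, while trivially $\int_{E}t_{k}^{q_{0}}\le\int_{Q}t_{k}^{q_{0}}$ and $\int_{E}w_{k}^{q_{1}}\le\int_{Q}w_{k}^{q_{1}}$. Chaining these,
\[
\Big(\int_{E}t_{k}^{q_{0}}\Big)^{\mu}\Big(\int_{E}w_{k}^{q_{1}}\Big)^{\nu}\le\Big(\int_{Q}t_{k}^{q_{0}}\Big)^{\mu}\Big(\int_{Q}w_{k}^{q_{1}}\Big)^{\nu}\lesssim\int_{Q}\omega_{k}^{q}\lesssim\int_{E}\omega_{k}^{q},
\]
and raising this together with the elementary bound to the power $1/q$, using $\mu/q=(1-\theta)/q_{0}$ and $\nu/q=\theta/q_{1}$, gives the equivalence.

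The main obstacle is precisely the reverse bound, and within it the reverse H\"{o}lder inequality on $Q$: ordinary H\"{o}lder always runs in the wrong direction, so the gain must come entirely from the $A_{\infty}$ hypotheses, and the most economical way to extract it is the Jensen/exponential characterization rather than the reverse-H\"{o}lder-exponent form. A secondary but pervasive point is the uniformity in $k$ of every implied constant, which is guaranteed throughout by the $k$-independence of $C_{1}$ in \eqref{Asum1} and hence of all Muckenhoupt constants derived from it.
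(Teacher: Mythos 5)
Your proposal is correct, but it proves the hard direction by a genuinely different route than the paper. The paper, following \cite{SSV13}, also starts from the identity $1=\bigl(\tfrac{1}{|E|}\int_E t_k^{\delta(1-\theta)}t_k^{-\delta(1-\theta)}\bigr)^{1/\delta}$ and H\"older, then uses the diagonal case of \eqref{Asum1} to trade $M_{Q,\sigma_1}(t_k^{-1})$ for $(M_{Q,q_0}(t_k))^{-1}$; after the specific choice $\delta=\tfrac{(1-\theta)q}{q_0}$ and a small auxiliary exponent $\mu$, the resulting product is rewritten as $|Q|^{-1/q}\bigl\|t_k^{1-\theta}\mathcal{M}_{\mu q_1/\theta}(w_k^{\theta}\chi_E)\,|L_q(\mathbb{R}^n)\bigr\|$, and the maximal operator is removed by the weighted maximal inequality of Lemma \ref{key-estimate1} applied with $t_k^{(1-\theta)q}\in A_{q/(1-\theta)}$; the set $E$ is carried along as a characteristic function throughout. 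You instead bypass the maximal function entirely: you convert the hypotheses into uniform $A_{q_0/(1-\theta)}$ and $A_{q_1/\theta}$ membership, obtain the reverse H\"older inequality on the full cube from Jensen plus the exponential ($A_\infty$) characterization, and then pass from $Q$ to $E$ via the measure-comparability property of $A_\infty$ weights (for which the quantitative $A_P$ estimate $\gamma(Q)/\gamma(E)\le A_P(\gamma)(|Q|/|E|)^{P}$ gives uniform constants directly). Both arguments are sound and both track uniformity in $k$ correctly, since everything reduces to the $k$-independence of $C_1$ in \eqref{Asum1}. Your version is more classical and self-contained, cleanly separating the reverse H\"older step from the passage to subsets of proportional measure, and it makes transparent exactly where the $A_\infty$ structure enters; the paper's version has the advantage of reusing machinery (Lemma \ref{key-estimate1}) already needed elsewhere and of treating $E$ in one pass. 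One small point worth making explicit in your write-up: the verification that $\omega_k^q\in A_\infty$ uniformly does not really require identifying the interpolated exponent $\sigma_1$ for $\{\omega_k\}$ as $r(q/r)'$ — it already falls out of the same Jensen/exponential computation, since $M_Q(\omega_k^q)\le M_Q(t_k^{q_0})^{\mu}M_Q(w_k^{q_1})^{\nu}\lesssim\exp\bigl(\tfrac{1}{|Q|}\int_Q\log\omega_k^q\bigr)$, which would shorten that paragraph.
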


\begin{proof}
First this lemma with $E=Q$ is given in \cite{SSV13}. Let $0<\delta ,\mu <1$
and assume that $q_{0}<q$. Since, $\frac{1}{1-\theta }=\frac{1}{q_{0}}+\frac{%
1}{\sigma _{1}}$, H\"{o}lder's inequality implies the following estimate%
\begin{align*}
1=& \Big(\frac{1}{|E|}\int_{E}t_{k}^{\delta (1-\theta )}(x)t_{k}^{-\delta
(1-\theta )}(x)dx\Big)^{\frac{1}{\delta }} \\
\leq & \frac{1}{\varepsilon ^{\frac{1}{\delta }}}\Big(\frac{1}{|Q|}%
\int_{Q}t_{k}^{\delta q_{0}}(x)\chi _{E}(x)dx\Big)^{\frac{1-\theta }{\delta
q_{0}}}\Big(\frac{1}{|Q|}\int_{Q}t_{k}^{-\delta \sigma _{1}}(x)\chi _{E}(x)dx%
\Big)^{\frac{1-\theta }{\delta \sigma _{1}}}.
\end{align*}%
Since $\{t_{k}\}$ is a $q_{0}$-admissible sequence satisfying $\mathrm{%
\eqref{Asum1}}$ with $\sigma _{1}=(1-\theta )(\frac{q_{0}}{1-\theta }%
)^{\prime }$ and $p=q_{0}$, we estimate the above expression by%
\begin{equation*}
\frac{c}{\varepsilon ^{\frac{1}{\delta }}}\Big(\frac{1}{|Q|}%
\int_{Q}t_{k}^{\delta q_{0}}(x)\chi _{E}(x)dx\Big)^{\frac{1-\theta }{\delta
q_{0}}}\Big(\frac{1}{|Q|}\int_{Q}t_{k}^{q_{0}}(x)\chi _{E}(x)dx\Big)^{-\frac{%
1-\theta }{q_{0}}},
\end{equation*}%
where the positive constant $c$ not depending on $k$ and $Q$.\ Similarly for 
$w_{k},k\in \mathbb{Z}$. Therefore 
\begin{align}
& |Q|^{-\frac{1}{q}}\Big(\int_{Q}t_{k}^{q_{0}}(x)\chi _{E}(x)dx\Big)^{\frac{%
1-\theta }{q_{0}}}\Big(\int_{Q}w_{k}^{q_{1}}(x)\chi _{E}(x)dx\Big)^{\frac{%
\theta }{q_{1}}}  \notag \\
\lesssim & \Big(\frac{1}{|Q|}\int_{Q}t_{k}^{\delta q_{0}}(x)\chi _{E}(x)dx%
\Big)^{\frac{1-\theta }{\delta q_{0}}}\Big(\frac{1}{|Q|}\int_{Q}w_{k}^{\mu
q_{1}}(x)\chi _{E}(x)dx\Big)^{\frac{\theta }{\mu q_{1}}}.
\label{est-t-and-w}
\end{align}%
Take $0<\mu <\frac{\theta (1-\theta )}{q_{1}}$. Using the fact that%
\begin{equation*}
\Big(\frac{1}{|Q|}\int_{Q}w_{k}^{\mu q_{1}}(x)\chi _{E}(x)dx\Big)^{\frac{%
\theta }{\mu q_{1}}}\leq \Big(\frac{1}{|Q|}\int_{Q}w_{k}^{\theta (1-\theta
)}(x)\chi _{E}(x)dx\Big)^{\frac{1}{1-\theta }}\lesssim \mathcal{M}_{1-\theta
}(w_{k}^{\theta }\chi _{E})(y)
\end{equation*}%
for any\ $y\in Q$ and taking $\delta =\frac{(1-\theta )q}{q_{0}}$, we find
that $\mathrm{\eqref{est-t-and-w}}$ is bounded by 
\begin{equation*}
c\big(\frac{1}{|Q|}\big)^{\frac{1-\theta }{\delta q_{0}}}\big\|%
t_{k}^{1-\theta }\mathcal{M}_{1-\theta }(w_{k}^{\theta }\chi _{E})|L_{\frac{%
\delta q_{0}}{1-\theta }}(\mathbb{R}^{n})\big\|=c\big(\frac{1}{|Q|}\big)^{%
\frac{1}{q}}\big\|t_{k}^{1-\theta }\mathcal{M}_{1-\theta }(w_{k}^{\theta
}\chi _{E})|L_{q}(\mathbb{R}^{n})\big\|.
\end{equation*}%
Observing that%
\begin{equation*}
t_{k}^{q_{0}}\in A_{\frac{q_{0}}{1-\theta }}\subset A_{\frac{q}{1-\theta }%
},\quad k\in \mathbb{Z}.
\end{equation*}%
Therefore, since $0<\frac{(1-\theta )q}{q_{0}}<1$, we obtain 
\begin{equation*}
t_{k}^{(1-\theta )q}\in A_{\frac{q}{1-\theta }},\quad k\in \mathbb{Z}.
\end{equation*}%
From Lemma \ref{key-estimate1} combined with Remark \ref{r-estimates}/(iii),%
\begin{align*}
\big\|t_{k}^{1-\theta }\mathcal{M}_{1-\theta }(w_{k}^{\theta }\chi
_{E})|L_{q}(\mathbb{R}^{n})\big\|\lesssim & \big\|t_{k}^{1-\theta
}w_{k}^{\theta }\chi _{E}|L_{q}(\mathbb{R}^{n})\big\| \\
=& c\Big(\int_{E}\omega _{k}^{q}(x)dx\Big)^{\frac{1}{q}}.
\end{align*}%
Then%
\begin{equation*}
\Big(\int_{Q}t_{k}^{q_{0}}(x)\chi _{E}(x)dx\Big)^{\frac{1-\theta }{q_{0}}}%
\Big(\int_{Q}w_{k}^{q_{1}}(x)\chi _{E}(x)dx\Big)^{\frac{\theta }{q_{1}}%
}\lesssim \Big(\int_{E}\omega _{k}^{q}(x)dx\Big)^{\frac{1}{q}}.
\end{equation*}%
The rest inequality follows by H\"{o}lder's inequality.
\end{proof}

Now we turn to the investigation of the Calder\'{o}n products of the
sequence spaces $\dot{a}_{p,q}(\mathbb{R}^{n},\{t_{k}\})$.

\begin{theorem}
\label{calderon-prod1}Let $0<\theta <1,1\leq p_{0},p_{1}<\infty $ and $1\leq
q_{0},q_{1}<\infty $. Let $\{t_{k}\}\subset L_{p_{0}}^{\mathrm{loc}}$ and $%
\{w_{k}\}\subset L_{p_{1}}^{\mathrm{loc}}$ be two weight sequences
satisfying $\mathrm{\eqref{Asum1}}$ with $(\sigma _{1},p,j)=((1-\theta )(%
\frac{p_{0}}{1-\theta })^{\prime },p=p_{0},k)$\ and $(\sigma
_{1},p,j)=(\theta (\frac{p_{1}}{\theta })^{\prime },p=p_{1},k)$,
respectively. We put%
\begin{equation}
\frac{1}{p}:=\frac{1-\theta }{p_{0}}+\frac{\theta }{p_{1}},\quad \frac{1}{q}%
:=\frac{1-\theta }{q_{0}}+\frac{\theta }{q_{1}},\quad \omega
_{k}:=t_{k}^{1-\theta }w_{k}^{\theta },\quad k\in \mathbb{Z}.
\label{Th-Cond}
\end{equation}%
Then%
\begin{equation*}
\big(\dot{a}_{p_{0},q_{0}}(\mathbb{R}^{n},\{t_{k}\})\big)^{1-\theta }\big(%
\dot{a}_{p_{1},q_{1}}(\mathbb{R}^{n},\{w_{k}\})\big)^{\theta }=\dot{a}_{p,q}(%
\mathbb{R}^{n},\{\omega _{k}\})
\end{equation*}%
holds in the sense of equivalent norms.
\end{theorem}

\begin{proof}
Obviously we can assume that $1<p_{0},p_{1},q_{0},q_{1}<\infty $. Put%
\begin{equation*}
t_{k,m}:=\big\|t_{k}|L_{p_{0}}(Q_{k,m})\big\|,\quad w_{k,m}:=\big\|%
w_{k}|L_{p_{1}}(Q_{k,m})\big\|
\end{equation*}%
and%
\begin{equation*}
\omega _{k,m}:=\big\|\omega _{k}|L_{p}(Q_{k,m})\big\|,\quad k\in \mathbb{Z}%
,m\in \mathbb{Z}^{n}.
\end{equation*}%
\textit{Step 1.} We deal with the case of $\dot{f}$-spaces. To prove we
additionally do it into the two Substeps 1.1 and 1.2.

\noindent \textit{Substep 1.1. }We shall prove%
\begin{equation*}
\big(\dot{f}_{p_{0},q_{0}}(\mathbb{R}^{n},\{t_{k}\})\big)^{1-\theta }\big(%
\dot{f}_{p_{1},q_{1}}(\mathbb{R}^{n},\{w_{k}\})\big)^{\theta
}\hookrightarrow \dot{f}_{p,q}(\mathbb{R}^{n},\{\omega _{k}\}).
\end{equation*}%
We suppose, that sequences $\lambda :=(\lambda _{k,m})_{k,m}$, $\lambda
^{i}:=(\lambda _{k,m}^{i})_{j,m},i=0,1$, are given and that%
\begin{equation*}
|\lambda _{k,m}|\leq |\lambda _{k,m}^{0}|^{1-\theta }|\lambda
_{k,m}^{1}|^{\theta }
\end{equation*}%
holds for all $k\in \mathbb{Z}$ and $m\in \mathbb{Z}^{n}$. Let 
\begin{equation*}
g:=\Big(\sum_{k=-\infty }^{\infty }\sum\limits_{m\in \mathbb{Z}^{n}}2^{k%
\frac{n}{2}q}\omega _{k}^{q}|\lambda _{k,m}|^{q}\chi _{k,m}\Big)^{\frac{1}{q}%
}.
\end{equation*}%
Since, 
\begin{equation*}
2^{k\frac{n}{2}q}\omega _{k}^{q}|\lambda _{k,m}|^{q}\chi _{k,m}\leq \big(2^{k%
\frac{n}{2}}t_{k}|\lambda _{k,m}^{0}|\chi _{k,m}\big)^{q(1-\theta )}\big(2^{k%
\frac{n}{2}}w_{k}|\lambda _{k,m}^{1}|\chi _{k,m}\big)^{q\theta },
\end{equation*}%
H\"{o}lder's inequality implies that $g$ can be estimated by%
\begin{align*}
& \Big(\sum_{k=-\infty }^{\infty }\sum\limits_{m\in \mathbb{Z}^{n}}\big(2^{k%
\frac{n}{2}}t_{k}|\lambda _{k,m}^{0}|\chi _{j,m}\big)^{q_{0}}\Big)^{\frac{%
1-\theta }{q_{0}}} \\
& \times \Big(\sum_{k=-\infty }^{\infty }\sum\limits_{m\in \mathbb{Z}^{n}}%
\big(2^{k\frac{n}{2}}w_{k}|\lambda _{k,m}^{1}|\chi _{j,m}\big)^{q_{1}}\Big)^{%
\frac{\theta }{q_{1}}}.
\end{align*}%
Applying H\"{o}lder's inequality again with conjugate indices $\frac{p_{0}}{%
1-\theta }$ and $\frac{p_{1}}{\theta }$, we obtain%
\begin{equation*}
\big\|\lambda |\dot{f}_{p,q}(\mathbb{R}^{n},\{\omega _{k}\})\big\|\leq \big\|%
\lambda ^{0}|\dot{f}_{p_{0},q_{0}}(\mathbb{R}^{n},\{t_{k}\})\big\|^{1-\theta
}\big\|\lambda ^{1}|\dot{f}_{p_{1},q_{1}}(\mathbb{R}^{n},\{w_{k}\})\big\|%
^{\theta }.
\end{equation*}%
\textit{Substep 1.2. }Now we turn to the proof of%
\begin{equation*}
\dot{f}_{p,q}(\mathbb{R}^{n},\{\omega _{k}\})\hookrightarrow \big(\dot{f}%
_{p_{0},q_{0}}(\mathbb{R}^{n},\{t_{k}\})\big)^{1-\theta }\big(\dot{f}%
_{p_{1},q_{1}}(\mathbb{R}^{n},\{w_{k}\})\big)^{\theta }.
\end{equation*}%
\textit{Substep 1.2.1.} We consider the case $\gamma =\frac{p}{p_{0}}-\frac{q%
}{q_{0}}>0$. Let the sequence $\lambda \in \dot{f}_{p,q}(\mathbb{R}%
^{n},\{\omega _{k}\})$ be given. We have to find sequences $\lambda ^{0}$
and $\lambda ^{1}$ such that 
\begin{equation*}
|\lambda _{k,m}|\leq M|\lambda _{k,m}^{0}|^{1-\theta }|\lambda
_{k,m}^{1}|^{\theta }
\end{equation*}%
for every $k\in \mathbb{Z}$, $m\in \mathbb{Z}^{n}$ and 
\begin{equation}
\big\|\lambda ^{0}|\dot{f}_{p_{0},q_{0}}(\mathbb{R}^{n},\{t_{k}\})\big\|%
^{1-\theta }\big\|\lambda ^{1}|\dot{f}_{p_{1},q_{1}}(\mathbb{R}%
^{n},\{w_{k}\})\big\|^{\theta }\leq c\big\|\lambda |\dot{f}_{p,q}(\mathbb{R}%
^{n},\{\omega _{k}\})\big\|,  \label{Est-Int}
\end{equation}%
with some constant $c$ independent of $\lambda $.

\textit{Preparation.} We set 
\begin{equation*}
g:=\Big(\sum_{k=-\infty }^{\infty }\sum\limits_{m\in \mathbb{Z}^{n}}2^{kn(%
\frac{1}{p}+\frac{1}{2})q}\omega _{k,m}^{q}|\lambda _{k,m}|^{q}\chi
_{Q_{k,m}}\Big)^{\frac{1}{q}}.
\end{equation*}%
We follow ideas of the proof of Theorem 8.2 in Frazier and Jawerth \cite%
{FJ90}, see also Sickel, Skrzypczak\ and\ Vyb\'{\i}ral \cite{SSV13}. Set 
\begin{equation*}
A_{\ell }:=\{x\in \mathbb{R}^{n}:g(x)>2^{\ell }\},
\end{equation*}%
with $\ell \in \mathbb{Z}$. Obviously $A_{\ell +1}\subset A_{\ell }$, with $%
\ell \in \mathbb{Z}$. Now we introduce a (partial) decomposition of $\mathbb{%
Z}\times \mathbb{Z}^{n}$ by taking%
\begin{equation*}
C_{\ell }:=\{(k,m):|Q_{k,m}\cap A_{\ell }|>\frac{|Q_{k,m}|}{2}\text{ \ and \ 
}|Q_{k,m}\cap A_{\ell +1}|\leq \frac{|Q_{k,m}|}{2}\},\quad \ell \in \mathbb{Z%
}.
\end{equation*}%
The sets $C_{\ell }$ are pairwise disjoint, i.e., $C_{\ell }\cap
C_{v}=\emptyset $ if $\ell \neq v$. Let us prove that $\lambda _{k,m}=0$
holds for all tuples $(k,m)\notin \cup _{\ell \in \mathbb{Z}}C_{\ell }$. Let
us consider one such tuple $(k_{0},m_{0})$ and let us choose $\ell _{0}\in 
\mathbb{Z}$ arbitrarily. First suppose that $(k_{0},m_{0})\notin C_{\ell
_{0}}$, then either%
\begin{equation}
|Q_{k_{0},m_{0}}\cap A_{\ell _{0}}|\leq \frac{|Q_{k_{0},m_{0}}|}{2}\text{%
\quad or\quad }|Q_{k_{0},m_{0}}\cap A_{\ell _{0}+1}|>\frac{|Q_{k_{0},m_{0}}|%
}{2}.  \label{Cond1}
\end{equation}%
Let us assume for the moment that the second condition is satisfied. By
induction on $\ell $ it follows%
\begin{equation}
|Q_{k_{0},m_{0}}\cap A_{\ell +1}|>\frac{|Q_{k_{0},m_{0}}|}{2}\quad \text{for
all\quad }\ell \geq \ell _{0}.  \label{Cond2}
\end{equation}%
Let $D:=\cap _{\ell \geq \ell _{0}}Q_{k_{0},m_{0}}\cap A_{\ell +1}$. The
family $\{Q_{k_{0},m_{0}}\cap A_{\ell }\}_{\ell }$ is a decreasing family of
sets, i.e., $Q_{k_{0},m_{0}}\cap A_{\ell +1}\subset Q_{k_{0},m_{0}}\cap
A_{\ell }$. Therefore, in view of $\mathrm{\eqref{Cond2}}$, the measure of
the set $D$ is larger than or equal to $\frac{|Q_{k_{0},m_{0}}|}{2}$. Hence%
\begin{align}
\big\|\lambda |f_{p,q}(\mathbb{R}^{n},\{\omega _{k}\})\big\|\geq & \Big(%
\int_{Q_{k_{0},m_{0}}\cap A_{\ell }}g^{p}(x)dx\Big)^{\frac{1}{p}}  \label{JF}
\\
\geq & 2^{\ell }\left\vert Q_{k_{0},m_{0}}\cap A_{\ell }\right\vert ^{\frac{1%
}{p}}\geq 2^{\ell }|D|^{\frac{1}{p}},\quad \ell \geq \max (\ell _{0},0). 
\notag
\end{align}%
Since $|D|\geq \frac{|Q_{k_{0},m_{0}}|}{2}>0$, the term on the right-hand
side of $\mathrm{\eqref{JF}}$ is strictly greater than $0$. Now the norm $%
\big\|\lambda |\dot{f}_{p,q}(\mathbb{R}^{n},\{\omega _{k}\})\big\|$ is
finite since $\lambda \in \dot{f}_{p,q}(\mathbb{R}^{n},\{\omega _{k}\})$. In 
$\mathrm{\eqref{JF}}$ letting $\ell $ tends to infinity we get a
contradiction. Hence, we have to turn in $\mathrm{\eqref{Cond1}}$ to the
situation where the first condition is satisfied. We claim that,%
\begin{equation*}
|Q_{k_{0},m_{0}}\cap A_{\ell }|\leq \frac{|Q_{k_{0},m_{0}}|}{2}\quad \text{%
for all\ }\ell \in \mathbb{Z}.
\end{equation*}%
Obviously this yields%
\begin{equation}
|Q_{k_{0},m_{0}}\cap A_{\ell }^{c}|>\frac{|Q_{k_{0},m_{0}}|}{2}\quad \text{%
for all\ }\ell \in \mathbb{Z},  \label{Cond2.1}
\end{equation}%
again\ this claim follows by induction on $\ell $ using $(k_{0},m_{0})\notin
\cup _{\ell \in \mathbb{Z}}C_{\ell }$. Set 
\begin{equation*}
E=\cap _{\ell \geq \max (0,-\ell _{0})}Q_{k_{0},m_{0}}\cap A_{-\ell
}^{c}=\cap _{\ell \geq \max (0,-\ell _{0})}h_{\ell }.
\end{equation*}%
The family $\{h_{\ell }\}_{\ell }$ is a decreasing family of sets, i.e., $%
h_{\ell +1}\subset h_{\ell }$. Therefore, in view of $\mathrm{\eqref{Cond2.1}%
}$, the measure of the set $E$ is larger than or equal to $\frac{%
|Q_{k_{0},m_{0}}|}{2}$. By selecting a point $x\in E$ we obtain%
\begin{equation}
\omega _{k_{0},m_{0}}|\lambda _{k_{0},m_{0}}|\leq g(x)\leq 2^{-\ell }.
\label{JF1}
\end{equation}%
Now, in $\mathrm{\eqref{JF1}}$ if $\ell $ tends to $+\infty $\ then the
claim, namely $\lambda _{k_{0},m_{0}}=0$, follows.

\textit{The choices of }$\lambda _{k,m}^{0}$\textit{\ and }$\lambda
_{k,m}^{1}$. If $(k,m)\notin \cup _{\ell \in \mathbb{Z}}C_{\ell }$, then we
define $\lambda _{k,m}^{0}=\lambda _{k,m}^{1}=0$. If $(k,m)\in C_{\ell }$,
we put\ 
\begin{equation*}
u:=n\big(\tfrac{q}{q_{0}p}-\tfrac{1}{p_{0}}\big)+\tfrac{n}{2}\big(\tfrac{q}{%
q_{0}}-1\big),\quad \vartheta _{k,m}:=w_{k,m}^{\frac{\theta q}{q_{0}}%
}t_{k,m}^{-\frac{\theta q}{q_{1}}}
\end{equation*}%
and 
\begin{equation*}
v:=n\big(\tfrac{q}{q_{1}p}-\tfrac{1}{p_{1}}\big)+\tfrac{n}{2}\big(\tfrac{q}{%
q_{1}}-1\big),\text{\quad }\varepsilon _{k,m}:=t_{k,m}^{\frac{(1-\theta )q}{%
q_{1}}}w_{k,m}^{-\frac{(1-\theta )q}{q_{0}}}.
\end{equation*}%
Let $\delta =\frac{p}{p_{1}}-\frac{q}{q_{1}}$. We put%
\begin{equation*}
\lambda _{k,m}^{0}:=\vartheta _{k,m}2^{ku}2^{\ell \gamma }|\lambda _{k,m}|^{%
\frac{q}{q_{0}}}.
\end{equation*}%
Also, set%
\begin{equation*}
\lambda _{k,m}^{1}:=\varepsilon _{k,m}2^{kv}2^{\ell \delta }|\lambda
_{k,m}|^{\frac{q}{q_{1}}}.
\end{equation*}%
Observe that%
\begin{equation*}
(1-\theta )u+\theta v=0,\quad (1-\theta )\gamma +\theta \delta =0
\end{equation*}%
and%
\begin{equation*}
|\lambda _{k,m}|=\big(\lambda _{k,m}^{0}\big)^{1-\theta }\big(\lambda
_{k,m}^{1}\big)^{\theta },
\end{equation*}%
which holds now for all pairs $(k,m)$.

\textit{Proof of }\eqref{Est-Int}. It will be sufficient to establish the
following two inequalities%
\begin{align}
\big\|\lambda ^{0}|\dot{f}_{p_{0},q_{0}}(\mathbb{R}^{n},\{t_{k}\})\big\|\leq
& c\big\|\lambda |\dot{f}_{p,q}(\mathbb{R}^{n},\{\omega _{k}\})\big\|^{\frac{%
p}{p_{0}}}  \label{Est-Int1} \\
\big\|\lambda ^{1}|\dot{f}_{p_{1},q_{1}}(\mathbb{R}^{n},\{w_{k}\})\big\|\leq
& c\big\|\lambda |\dot{f}_{p,q}(\mathbb{R}^{n},\{\omega _{k}\})\big\|^{\frac{%
p}{p_{1}}}.  \label{Est-Int2}
\end{align}%
Let us prove $\mathrm{\eqref{Est-Int1}}$\textrm{. }Write%
\begin{equation*}
\sum_{k=-\infty }^{\infty }\sum\limits_{m\in \mathbb{Z}%
^{n}}t_{k,m}^{q_{0}}2^{kn(\frac{1}{p_{0}}+\frac{1}{2})q_{0}}(\lambda
_{k,m}^{0})^{q_{0}}\chi _{k,m}=\sum_{\ell =-\infty }^{\infty
}\sum\limits_{(k,m)\in C_{\ell }}t_{k,m}^{q_{0}}2^{kn(\frac{1}{p_{0}}+\frac{1%
}{2})q_{0}}(\lambda _{k,m}^{0})^{q_{0}}\chi _{k,m}=I.
\end{equation*}%
Observe that, 
\begin{align*}
\chi _{k,m}(x)\lesssim & \frac{1}{|Q_{k,m}\cap A_{\ell }|}\int_{Q_{k,m}\cap
A_{\ell }}dy\lesssim \frac{1}{|Q_{k,m}|}\int_{Q_{k,m}\cap A_{\ell }}dy \\
\lesssim & \mathcal{M}(\chi _{Q_{k,m}\cap A_{\ell }})(x),\text{\quad }%
(k,m)\in C_{\ell },
\end{align*}%
where the implicit positive constant not depending on $x$, $\ell ,m$ and $k$%
. We can apply Theorem \ref{Fe-St71} and obtain that 
\begin{equation*}
\big\|I^{\frac{1}{q_{0}}}|L_{p_{0}}(\mathbb{R}^{n})\big\|
\end{equation*}%
is bounded by%
\begin{align*}
& c\Big\|\Big(\sum_{\ell =-\infty }^{\infty }\sum\limits_{(k,m)\in C_{\ell }}%
\Big(\mathcal{M}\big(t_{k,m}2^{kn(\frac{1}{p_{0}}+\frac{1}{2})}\lambda
_{k,m}^{0}\chi _{Q_{k,m}\cap A_{\ell }}\big)\Big)^{q_{0}}\Big)^{\frac{1}{%
q_{0}}}|L_{p_{0}}(\mathbb{R}^{n})\Big\| \\
\lesssim & \Big\|\Big(\sum_{\ell =-\infty }^{\infty }\sum\limits_{(k,m)\in
C_{\ell }}t_{k,m}^{q_{0}}2^{kn(\frac{1}{p_{0}}+\frac{1}{2})q_{0}}(\lambda
_{k,m}^{0})^{q_{0}}\chi _{Q_{k,m}\cap A_{\ell }}\Big)^{\frac{1}{q_{0}}%
}|L_{p_{0}}(\mathbb{R}^{n})\Big\|.
\end{align*}%
We have%
\begin{equation*}
q_{0}-\frac{\theta q}{q_{1}}q_{0}=(1-\theta )q,
\end{equation*}%
hence with application of Lemma \ref{Holder}, 
\begin{equation*}
\vartheta _{k,m}^{q_{0}}t_{k,m}^{q_{0}}=w_{k,m}^{\theta q}t_{k,m}^{(1-\theta
)q}\approx \omega _{k,m}^{q}.
\end{equation*}%
We have%
\begin{equation*}
2^{\ell \gamma }\leq \sum_{\ell =-\infty }^{\infty }\sum\limits_{m\in 
\mathbb{Z}^{n}}\omega _{k,m}^{q}2^{kn(\frac{1}{p}+\frac{1}{2})q}|\lambda
_{k,m}|^{q}\chi _{Q_{k,m}}(x)\Big)^{\frac{\gamma }{q}},
\end{equation*}%
if $x\in A_{\ell }$. Therefore,%
\begin{equation*}
\big\|I^{\frac{1}{q_{0}}}|L_{p_{0}}(\mathbb{R}^{n})\big\|\lesssim \Big\|\Big(%
\sum_{k=-\infty }^{\infty }\sum\limits_{m\in \mathbb{Z}^{n}}2^{kn(\frac{1}{p}%
+\frac{1}{2})q}\omega _{k,m}^{q}|\lambda _{k,m}|^{q}\chi _{Q_{k,m}}\Big)^{%
\frac{1}{q}}|L_{p}(\mathbb{R}^{n})\Big\|^{\frac{p}{p_{0}}}.
\end{equation*}%
Let us prove $\mathrm{\eqref{Est-Int2}}$\textrm{. }We only make some
comments concerning necessary modifications. Again, we write 
\begin{equation*}
\sum_{k=0}^{\infty }\sum\limits_{m\in \mathbb{Z}^{n}}w_{k,m}^{q_{1}}2^{kn(%
\frac{1}{p_{1}}+\frac{1}{2})q_{1}}(\lambda _{k,m}^{1})^{q_{1}}\chi
_{k,m}=\sum_{\ell =-\infty }^{\infty }\sum\limits_{(k,m)\in C_{\ell
}}w_{k,m}^{q_{1}}2^{kn(\frac{1}{p_{1}}+\frac{1}{2})q_{1}}(\lambda
_{k,m}^{1})^{q_{1}}\chi _{k,m}.
\end{equation*}%
Replacing $t_{k,m}$, $q_{0}$, $p_{0}$, $u$ and $\gamma $ by $w_{k,m}$, $%
q_{1} $, $p_{1}$, $v$ and $\delta $, respectively, and $Q_{k,m}\cap $ $%
A_{\ell }$ by $Q_{k,m}\cap A_{\ell +1}^{c}$ and this leads to the desired
inequality in view that $\delta +\frac{q}{q_{1}}=\frac{p}{p_{1}}$.

\noindent \textit{Substep 1.2.2.} We consider the case $\gamma <0$. By an
argument similar to the above with $Q_{k,m}\cap A_{\ell }$, respectively $%
Q_{k,m}\cap A_{\ell +1}^{c}$, replaced by\ $Q_{k,m}\cap A_{\ell +1}^{c}$,
respectively $Q_{k,m}\cap A_{\ell }$.

\noindent \textit{Substep 1.2.3.} We consider the case $\gamma =0$. Then $%
\delta =0$. This case can be easily solved.

\noindent \textit{Step 2.} We deal with the case of $\dot{b}$-spaces. We
prove only the embedding%
\begin{equation*}
\dot{b}_{p,q}(\mathbb{R}^{n},\{\omega _{k}\})\hookrightarrow \big(\dot{b}%
_{p_{0},q_{0}}(\mathbb{R}^{n},\{t_{k}\})\big)^{1-\theta }\big(\dot{b}%
_{p_{1},q_{1}}(\mathbb{R}^{n},\{w_{k}\})\big)^{\theta },
\end{equation*}%
since the opposite embedding follows by H\"{o}lder's inequality. Assume that 
$1\leq q_{0}<q_{1}<\infty $. Let $\mu =\frac{q}{q_{0}}-\frac{p}{p_{0}},\tau =%
\frac{q}{q_{1}}-\frac{p}{p_{1}}$, \ 
\begin{equation*}
u:=\tfrac{n}{2}\big(\tfrac{q}{q_{0}}-1\big),\quad v:=\tfrac{n}{2}\big(\tfrac{%
q}{q_{1}}-1\big),\quad \vartheta _{k,m}:=t_{k,m}^{-\frac{\theta p}{p_{1}}%
}w_{k,m}^{\frac{\theta p}{p_{0}}}
\end{equation*}%
and 
\begin{equation*}
\varepsilon _{k,m}:=t_{k,m}^{\frac{(1-\theta )p}{p_{1}}}w_{k,m}^{-\frac{%
(1-\theta )p}{p_{0}}}.
\end{equation*}%
We put%
\begin{equation*}
\lambda _{k,m}^{0}:=\vartheta _{k,m}2^{ku}|\lambda _{k,m}|^{\frac{p}{p_{0}}}%
\Big(\sum\limits_{h\in \mathbb{Z}^{n}}|\lambda _{k,h}|^{p}t_{k,h}^{p}\Big)^{%
\frac{\mu }{p}},
\end{equation*}%
if $\lambda _{k,m}\neq 0$ and 
\begin{equation*}
\lambda _{k,m}^{0}:=0\quad \text{if}\quad \lambda _{k,m}=0.
\end{equation*}%
Also, set%
\begin{equation*}
\lambda _{k,m}^{1}:=\varepsilon _{k,m}2^{kv}|\lambda _{k,m}|^{\frac{p}{p_{1}}%
}\Big(\sum\limits_{h\in \mathbb{Z}^{n}}|\lambda _{k,h}|^{p}t_{k,h}^{p}\Big)^{%
\frac{\tau }{p}},
\end{equation*}%
if $\lambda _{k,m}\neq 0$ and 
\begin{equation*}
\lambda _{k,m}^{1}:=0\quad \text{if}\quad \lambda _{k,m}=0.
\end{equation*}%
Observe that%
\begin{equation*}
|\lambda _{k,m}|=\big(\lambda _{k,m}^{0}\big)^{1-\theta }\big(\lambda
_{k,m}^{1}\big)^{\theta },
\end{equation*}%
which holds now for all pairs $(k,m)$. We have 
\begin{equation*}
p_{0}-\frac{\theta p}{p_{1}}p_{0}=(1-\theta )p\quad \text{and}\quad p_{1}-%
\frac{(1-\theta )p}{p_{0}}p_{1}=\theta p,
\end{equation*}%
which implies that 
\begin{equation*}
\vartheta _{k,m}^{p_{0}}t_{k,m}^{p_{0}}=w_{k,m}^{\theta p}t_{k,m}^{(1-\theta
)p}\approx \omega _{k,m}^{p}
\end{equation*}%
and%
\begin{equation*}
\varepsilon _{k,m}^{p_{1}}w_{k,m}^{p_{1}}=w_{k,m}^{\theta
p}t_{k,m}^{(1-\theta )p}\approx \omega _{k,m}^{p}.
\end{equation*}%
Simple calculation gives 
\begin{equation*}
\big\|\lambda ^{i}|\dot{b}_{p_{i},q_{i}}(\mathbb{R}^{n},\{t_{k}\})\big\|%
\lesssim \big\|\lambda |\dot{b}_{p,q}(\mathbb{R}^{n},\{\omega _{k}\})\big\|^{%
\frac{q}{q_{i}}},\quad i=0,1.
\end{equation*}%
The proof is complete.
\end{proof}

Next we study the Calder\'{o}n product of $\dot{f}_{p_{0},q_{0}}(\mathbb{R}%
^{n},\{t_{k}\})$ and $\dot{f}_{\infty ,q_{1}}(\mathbb{R}^{n},\{w_{k}\})$.
For $\{t_{k}\}=\{w_{k}\}=\{1\}$, we refer to \cite{FJ90}.

\begin{theorem}
\label{calderon-prod2}Let $0<\theta <1,1\leq p_{0}<\infty $ and $1\leq
q_{0},q_{1}<\infty $. Let $\{t_{k}\}\subset L_{p_{0}}^{\mathrm{loc}}$ and $%
\{w_{k}\}\subset L_{q_{1}}^{\mathrm{loc}}$ be two weight sequences
satisfying $\mathrm{\eqref{Asum1}}$ with $(\sigma _{1},p,j)=((1-\theta )(%
\frac{p_{0}}{1-\theta })^{\prime },p=p_{0},k)$\ and $(\sigma
_{1},p,j)=(\theta (\frac{q_{1}}{\theta })^{\prime },p=q_{1},k)$,
respectively. We put%
\begin{equation*}
\frac{1}{p}:=\frac{1-\theta }{p_{0}},\quad \frac{1}{q}:=\frac{1-\theta }{%
q_{0}}+\frac{\theta }{q_{1}},\quad \omega _{k}:=t_{k}^{1-\theta
}w_{k}^{\theta },\quad k\in \mathbb{Z}.
\end{equation*}%
Then%
\begin{equation*}
\big(\dot{f}_{p_{0},q_{0}}(\mathbb{R}^{n},\{t_{k}\})\big)^{1-\theta }\big(%
\dot{f}_{\infty ,q_{1}}(\mathbb{R}^{n},\{w_{k}\})\big)^{\theta }=\dot{f}%
_{p,q}(\mathbb{R}^{n},\{\omega _{k}\})
\end{equation*}%
holds in the sense of equivalent norms.
\end{theorem}

\begin{proof}
Obviously we can assume that $1<p_{0},p_{1},q_{0},q_{1}<\infty $.

\noindent \textit{Step 1.} In this step we prove the embedding%
\begin{equation*}
\big(\dot{f}_{p_{0},q_{0}}(\mathbb{R}^{n},\{t_{k}\})\big)^{1-\theta }\big(%
\dot{f}_{\infty ,q_{1}}(\mathbb{R}^{n},\{w_{k}\})\big)^{\theta
}\hookrightarrow \dot{f}_{p,q}(\mathbb{R}^{n},\{\omega _{k}\}).
\end{equation*}%
We suppose, that sequences $\lambda :=(\lambda _{k,m})_{j,m}$, $\lambda
^{i}:=(\lambda _{k,m}^{i})_{j,m},i=0,1$, are given and that%
\begin{equation*}
|\lambda _{k,m}|\leq |\lambda _{k,m}^{0}|^{1-\theta }|\lambda
_{k,m}^{1}|^{\theta }
\end{equation*}%
holds for all $k\in \mathbb{Z}$ and $m\in \mathbb{Z}^{n}$. Let $0<\kappa <1$
be such that 
\begin{equation*}
\frac{1}{\kappa p}=\frac{1-\theta }{p_{0}}+\frac{\theta }{q_{1}}.
\end{equation*}%
We set 
\begin{equation}
g:=\Big(\sum_{k=-\infty }^{\infty }\sum\limits_{m\in \mathbb{Z}^{n}}2^{kn(%
\frac{1}{2}+\frac{1}{\kappa p})q}\omega _{k,m,\kappa }^{q}|\lambda
_{k,m}|^{q}\chi _{E_{Q_{k,m}}}\Big)^{\frac{1}{q}},  \label{new-g}
\end{equation}%
with $E_{Q_{k,m}}\subset Q_{k,m}$, $|E_{Q_{k,m}}|>|Q_{k,m}|/2$ and 
\begin{equation*}
\omega _{k,m,\kappa }=\big\|\omega _{k}|L_{\kappa p}(Q_{k,m})\big\|,\quad
k\in \mathbb{Z},m\in \mathbb{Z}^{n}.
\end{equation*}%
By H\"{o}lder's inequality 
\begin{equation*}
\omega _{k,m,\kappa }\leq t_{k,m}^{1-\theta }w_{k,m,q_{1}}^{\theta },\quad
k\in \mathbb{Z},m\in \mathbb{Z}^{n},
\end{equation*}%
with%
\begin{equation*}
t_{k,m}:=\big\|t_{k}|L_{p_{0}}(Q_{k,m})\big\|\quad \text{and}\quad
w_{k,m,q_{1}}=\big\|w_{k}|L_{q_{1}}(Q_{k,m})\big\|,\quad k\in \mathbb{Z}%
,m\in \mathbb{Z}^{n}.
\end{equation*}%
Since 
\begin{align*}
& 2^{kn(\frac{1}{2}+\frac{1}{\kappa p})q}\omega _{k,m,\kappa }^{q}|\lambda
_{k,m}|^{q}\chi _{E_{Q_{k,m}}}(x) \\
\leq & \Big(2^{kn(\frac{1}{2}+\frac{1}{p_{0}})}t_{k,m}|\lambda _{k,m}|\chi
_{E_{Q_{k,m}}}(x)\Big)^{q(1-\theta )}\Big(2^{kn(\frac{1}{2}+\frac{1}{q_{1}}%
)}w_{k,m,q_{1}}|\lambda _{k,m}|\chi _{E_{Q_{k,m}}}(x)\Big)^{q\theta },
\end{align*}%
again, H\"{o}lder's inequality implies that $g$ can be estimated by%
\begin{align*}
& \Big(\sum_{k=-\infty }^{\infty }\sum\limits_{m\in \mathbb{Z}^{n}}2^{kn(%
\frac{1}{2}+\frac{1}{p_{0}})q_{0}}t_{k,m}^{q_{0}}|\lambda
_{k,m}|^{q_{0}}\chi _{E_{Q_{j,m}}}\Big)^{\frac{1-\theta }{q_{0}}} \\
& \times \Big(\sum_{k=-\infty }^{\infty }\sum\limits_{m\in \mathbb{Z}%
^{n}}2^{kn(\frac{1}{2}+\frac{1}{q_{1}})q_{1}}w_{k,m,q_{1}}^{q_{1}}|\lambda
_{k,m}|^{q_{1}}\chi _{E_{Q_{k,m}}}\Big)^{\frac{\theta }{q_{1}}}.
\end{align*}%
We estimate the second factor by its $L^{\infty }$-norm. By using
Proposition \ref{norm-equiv11} we obtain \ 
\begin{equation*}
\big\|\lambda |\dot{f}_{p,q}(\mathbb{R}^{n},\{\omega _{k}\})\big\|\leq \big\|%
\lambda ^{0}|\dot{f}_{p_{0},q_{0}}(\mathbb{R}^{n},\{\omega _{k}\})\big\|%
^{1-\theta }\big\|\lambda ^{1}|\dot{f}_{\infty ,q_{1}},\{w_{k}\})\big\|%
^{\theta }.
\end{equation*}%
\textit{Step 2.} We prove the embedding%
\begin{equation*}
\dot{f}_{p,q}(\mathbb{R}^{n},\{\omega _{k}\})\hookrightarrow \big(\dot{f}%
_{p_{0},q_{0}}(\mathbb{R}^{n},\{t_{k}\})\big)^{1-\theta }\big(\dot{f}%
_{\infty ,q_{1}}(\mathbb{R}^{n},\{w_{k}\})\big)^{\theta }.
\end{equation*}%
Let the sequence $\lambda \in \dot{f}_{p,q}(\mathbb{R}^{n},\{\omega _{k}\})$
be given. We have to find sequences $\lambda ^{0}$ and $\lambda ^{1}$ such
that 
\begin{equation*}
|\lambda _{k,m}|\leq M|\lambda _{k,m}^{0}|^{1-\theta }|\lambda
_{k,m}^{1}|^{\theta }
\end{equation*}%
for every $k\in \mathbb{Z}$, $m\in \mathbb{Z}^{n}$ and 
\begin{equation}
\big\|\lambda ^{0}|\dot{f}_{p_{0},q_{0}}(\mathbb{R}^{n},\{t_{k}\})\big\|%
^{1-\theta }\big\|\lambda ^{1}|\dot{f}_{\infty ,q_{1}}(\mathbb{R}%
^{n},\{w_{k}\})\big\|^{\theta }\leq c\big\|\lambda |\dot{f}_{p,q}(\mathbb{R}%
^{n},\{\omega _{k}\})\big\|.  \label{Est-Int.New}
\end{equation}%
Let 
\begin{equation*}
\gamma =\frac{p}{p_{0}}-\frac{q}{q_{0}}\quad \text{and}\quad \delta =-\frac{q%
}{q_{1}}.
\end{equation*}%
We have to subdivide Step 2 into the two substeps:

\noindent \textit{Substep 2.1.} We consider the case $\gamma >0$. Set 
\begin{equation*}
A_{\ell }:=\{x\in \mathbb{R}^{n}:g(x)>2^{\ell }\},
\end{equation*}%
with $\ell \in \mathbb{Z}$ and $g$ as in $\mathrm{\eqref{new-g}}$. Obviously 
$A_{\ell +1}\subset A_{\ell }$, with $\ell \in \mathbb{Z}$. Now we introduce
a (partial) decomposition of $\mathbb{Z}\times \mathbb{Z}^{n}$ by taking%
\begin{equation*}
C_{\ell }:=\{(k,m):|Q_{k,m}\cap A_{\ell }|>\frac{|Q_{k,m}|}{2}\text{\quad
and\quad }|Q_{k,m}\cap A_{\ell +1}|\leq \frac{|Q_{k,m}|}{2}\},\quad \ell \in 
\mathbb{Z}.
\end{equation*}%
The sets $C_{\ell }$ are pairwise disjoint, i.e., $C_{\ell }\cap
C_{v}=\emptyset $ if $\ell \neq v$. As in Theorem \ref{calderon-prod2} we
can prove that $\lambda _{k,m}=0$ holds for all tuples $(k,m)\notin \cup
_{\ell \in \mathbb{Z}}C_{\ell }$. If $(k,m)\notin \cup _{\ell \in \mathbb{Z}%
}C_{\ell }$, then we define $\lambda _{k,m}^{0}=\lambda _{k,m}^{1}=0$. If $%
(k,m)\in C_{\ell }$, we put\ 
\begin{equation*}
u:=n\big(\tfrac{q}{q_{0}\kappa p}-\tfrac{1}{p_{0}}\big)+\tfrac{n}{2}\big(%
\tfrac{q}{q_{0}}-1\big),\quad \vartheta _{k,m}:=t_{k,m}^{-\frac{\theta q}{%
q_{1}}}w_{k,m,q_{1}}^{\frac{\theta q}{q_{0}}}
\end{equation*}%
and 
\begin{equation*}
v:=n\big(\tfrac{q}{q_{1}\kappa p}-\tfrac{1}{q_{1}}\big)+\tfrac{n}{2}\big(%
\tfrac{q}{q_{1}}-1\big),\text{\quad }\varepsilon _{k,m}:=t_{k,m}^{\frac{%
(1-\theta )q}{q_{1}}}w_{k,m,q_{1}}^{-\frac{(1-\theta )q}{q_{0}}}.
\end{equation*}%
We put%
\begin{equation*}
\lambda _{k,m}^{0}:=\vartheta _{k,m}2^{ku}2^{\ell \gamma }|\lambda _{k,m}|^{%
\frac{q}{q_{0}}}\quad \text{and}\quad \lambda _{k,m}^{1}:=\varepsilon
_{k,m}2^{kv}2^{\ell \delta }|\lambda _{k,m}|^{\frac{q}{q_{1}}}.
\end{equation*}%
Observe that%
\begin{equation*}
(1-\theta )u+\theta v=0,\quad (1-\theta )\gamma +\theta \delta =0
\end{equation*}%
and%
\begin{equation*}
|\lambda _{k,m}|=\big(\lambda _{k,m}^{0}\big)^{1-\theta }\big(\lambda
_{k,m}^{1}\big)^{\theta },
\end{equation*}%
which holds now for all pairs $(k,m)$. To prove $\mathrm{\eqref{Est-Int.New}}
$, it will be sufficient to establish the following two inequalities%
\begin{align}
\big\|\lambda ^{0}|\dot{f}_{p_{0},q_{0}}(\mathbb{R}^{n},\{t_{k}\})\big\|\leq
& c\big\|\lambda |\dot{f}_{p,q}(\mathbb{R}^{n},\{\omega _{k}\})\big\|^{\frac{%
p}{p_{0}}}  \label{Est-Int1.New} \\
\big\|\lambda ^{1}|\dot{f}_{\infty ,q_{1}}(\mathbb{R}^{n},\{w_{k}\})\big\|%
\leq & c.  \label{Est-Int2.New}
\end{align}

\textit{Proof of }\eqref{Est-Int1.New}\textrm{. }Write%
\begin{equation*}
\sum_{k=-\infty }^{\infty }\sum\limits_{m\in \mathbb{Z}%
^{n}}t_{k,m}^{q_{0}}2^{kn(\frac{1}{p_{0}}+\frac{1}{2})q_{0}}(\lambda
_{k,m}^{0})^{q_{0}}\chi _{k,m}=\sum_{\ell =-\infty }^{\infty
}\sum\limits_{(k,m)\in C_{\ell }}t_{k,m}^{q_{0}}2^{kn(\frac{1}{p_{0}}+\frac{1%
}{2})q_{0}}(\lambda _{k,m}^{0})^{q_{0}}\chi _{k,m}=I.
\end{equation*}%
Observe that, 
\begin{align*}
\chi _{k,m}(x)\lesssim & \frac{1}{|Q_{k,m}\cap A_{\ell }|}\int_{Q_{k,m}\cap
A_{\ell }}dy\lesssim \frac{1}{|Q_{k,m}|}\int_{Q_{k,m}\cap A_{\ell }}dy \\
\lesssim & \mathcal{M}(\chi _{Q_{k,m}\cap A_{\ell }})(x),\text{\quad }%
(k,m)\in C_{\ell },
\end{align*}%
where the implicit positive constant not depending on $x$, $\ell ,m$ and $k$%
. We can apply Theorem \ref{Fe-St71} and obtain that 
\begin{equation*}
\big\|I^{\frac{1}{q_{0}}}|L_{p_{0}}(\mathbb{R}^{n})\big\|
\end{equation*}%
is bounded by%
\begin{align*}
& c\Big\|\Big(\sum_{\ell =-\infty }^{\infty }\sum\limits_{(k,m)\in C_{\ell }}%
\Big(\mathcal{M}\big(t_{k,m}2^{kn(\frac{1}{p_{0}}+\frac{1}{2})}\lambda
_{k,m}^{0}\chi _{Q_{k,m}\cap A_{\ell }}\big)\Big)^{q_{0}}\Big)^{\frac{1}{%
q_{0}}}|L_{p_{0}}(\mathbb{R}^{n})\Big\| \\
\lesssim & \Big\|\Big(\sum_{\ell =-\infty }^{\infty }\sum\limits_{(k,m)\in
C_{\ell }}t_{k,m}^{q_{0}}2^{kn(\frac{1}{p_{0}}+\frac{1}{2})q_{0}}(\lambda
_{k,m}^{0})^{q_{0}}\chi _{Q_{k,m}\cap A_{\ell }}\Big)^{\frac{1}{q_{0}}%
}|L_{p_{0}}(\mathbb{R}^{n})\Big\|.
\end{align*}%
We have $q_{0}-\frac{\theta q}{q_{1}}q_{0}=(1-\theta )q$, which together
with Lemma \ref{Holder} yields 
\begin{equation*}
\vartheta _{k,m}^{q_{0}}t_{k,m}^{q_{0}}=w_{k,m,q_{1}}^{\theta
q}t_{k,m}^{(1-\theta )q}\approx \omega _{k,m,\kappa }^{q}.
\end{equation*}%
In addition, we have%
\begin{equation*}
2^{\ell \gamma }\leq \sum_{\ell =-\infty }^{\infty }\sum\limits_{m\in 
\mathbb{Z}^{n}}\omega _{k,m,\kappa }^{q}2^{kn(\frac{1}{\kappa p}+\frac{1}{2}%
)q}|\lambda _{k,m}|^{q}\chi _{Q_{k,m}}(x)\Big)^{\frac{\gamma }{q}},
\end{equation*}%
if $x\in A_{\ell }$. Therefore,%
\begin{align*}
\big\|I^{\frac{1}{q_{0}}}|L_{p_{0}}(\mathbb{R}^{n})\big\|\lesssim & \Big\|%
\Big(\sum_{k=-\infty }^{\infty }\sum\limits_{m\in \mathbb{Z}^{n}}2^{kn(\frac{%
1}{\kappa p}+\frac{1}{2})q}\omega _{k,m,\kappa }^{q}|\lambda _{k,m}|^{q}\chi
_{Q_{k,m}}\Big)^{\frac{1}{q}}|L_{p}(\mathbb{R}^{n})\Big\|^{\frac{p}{p_{0}}}
\\
\lesssim & \big\|\lambda |\dot{f}_{p,q}(\mathbb{R}^{n},\{\omega _{k}\})\big\|%
^{\frac{p}{p_{0}}}.
\end{align*}

\textit{Proof of }\eqref{Est-Int2.New}\textrm{. }By Proposition \ref%
{norm-equiv11} with $E_{Q_{k,m}}^{\ell }=Q_{k,m}\cap A_{\ell +1}^{c}$,%
\begin{equation*}
\big\|\lambda ^{1}|\dot{f}_{\infty ,q_{1}}(\mathbb{R}^{n},\{w_{k}\})\big\|%
\lesssim \Big\|\Big(\sum_{\ell =-\infty }^{\infty }\sum\limits_{(k,m)\in
C_{\ell }}2^{kn(\frac{1}{q_{1}}+\frac{1}{2})q_{1}}w_{k,m,q_{1}}^{q_{1}}|%
\lambda _{k,m}^{1}|^{q_{1}}\chi _{E_{Q_{k,m}}^{\ell }}\Big)^{1/q_{1}}\Big\|%
_{\infty }.
\end{equation*}%
Observe that%
\begin{equation*}
w_{k,m,q_{1}}\lambda _{k,m}^{1}\lesssim \omega _{k,m,\kappa }^{\frac{q}{q_{1}%
}}2^{\ell \delta +kv}|\lambda _{k,m}|^{\frac{q}{q_{1}}}\leq \omega
_{k,m,\kappa }^{\frac{q}{q_{1}}}2^{kv}g^{-\frac{q}{q_{1}}}(x)|\lambda
_{k,m}|^{\frac{q}{q_{1}}}
\end{equation*}%
for any $x\in E_{Q_{k,m}}^{\ell }$ and%
\begin{equation*}
v+\frac{n}{q_{1}}+\frac{n}{2}=\frac{nq}{q_{1}}(\frac{1}{\kappa p}+\frac{1}{2}%
).
\end{equation*}%
Therefore, 
\begin{equation*}
\big\|\lambda ^{1}|\dot{f}_{\infty ,q_{1}}(\mathbb{R}^{n},\{w_{k}\})\big\|%
\lesssim 1.
\end{equation*}%
\textit{Substep 1.2.2.} We consider the case $\gamma <0$. By an argument
similar to the above with $Q_{k,m}\cap A_{\ell }$ replaced by\ $Q_{k,m}\cap
A_{\ell +1}^{c}$.

Hence, we complete the proof.
\end{proof}

There are nice connections between complex interpolation spaces and the
corresponding Calder\'{o}n product, see the original paper of Calder\'{o}n 
\cite{Ca64}. Suppose \ that $X_{0}$ and $X_{1}$ are Banach lattices on
measure space $\left( \mathcal{M},\mu \right) $, and let 
\begin{equation}
X=X_{0}^{1-\theta }\cdot X_{1}^{\theta }  \label{calderon1}
\end{equation}%
for some $0<\theta <1$. Suppose that $X$ hus the property%
\begin{equation*}
f\in X,\text{ \ }\left\vert f_{n}\left( x\right) \right\vert \leq \left\vert
f\left( x\right) \right\vert \text{, }\mu \text{-a.e., \ and \ }%
\lim_{n\rightarrow \infty }f_{n}=f,\mu \text{-}a.e.\Longrightarrow
\lim_{n\rightarrow \infty }\left\Vert f_{n}\right\Vert _{X}=\left\Vert
f\right\Vert _{X},
\end{equation*}%
Calder\'{o}n \cite[p. 125]{Ca64} then shows that 
\begin{equation}
X_{0}^{1-\theta }\cdot X_{1}^{\theta }=[X_{0},X_{1}]_{\theta }.
\label{calderon}
\end{equation}

Let $1\leq p<\infty $, $1\leq q<\infty $ and $\{t_{k}\}$ be a $p$-admissible
weight sequence. In $\mathrm{\eqref{calderon1}}$ assume that $X=\dot{a}%
_{p,q}(\mathbb{R}^{n},\{t_{k}\})$ for some Banach lattices $X_{0}$ and $%
X_{1} $. By dominated convergence theorem $X$ satisfies the Calder\'{o}n's
result $\mathrm{\eqref{calderon}}$. Using Theorem \ref{calderon-prod1} and
Calder\'{o}n's result we get the following theorem.

\begin{theorem}
\label{Ind-resol2}Let $0<\theta <1,1\leq p_{0}<\infty $, $1\leq p_{1}<\infty 
$ and $1\leq q_{0},q_{1}<\infty $. Let $\{t_{k}\}\subset L_{p_{0}}^{\mathrm{%
loc}}$ and $\{w_{k}\}\subset L_{p_{1}}^{\mathrm{loc}}$ be two weight\
sequences satisfying $\mathrm{\eqref{Asum1}}$ with $(\sigma
_{1},p,j)=((1-\theta )(\frac{p_{0}}{1-\theta })^{\prime },p=p_{0},k)$\ and $%
(\sigma _{1},p,j)=(\theta (\frac{p_{1}}{\theta })^{\prime },p=p_{1},k)$,
respectively. We put%
\begin{equation*}
\frac{1}{q}:=\frac{1-\theta }{q_{0}}+\frac{\theta }{q_{1}}\text{\quad
and\quad }\omega _{k}:=t_{k}^{1-\theta }w_{k}^{\theta },\quad k\in \mathbb{Z}%
.
\end{equation*}%
Let $\frac{1}{p}:=\frac{1-\theta }{p_{0}}+\frac{\theta }{p_{1}}$. Then%
\begin{equation*}
\lbrack \dot{a}_{p_{0},q_{0}}(\mathbb{R}^{n},\{t_{k}\}),\dot{a}%
_{p_{1},q_{1}}(\mathbb{R}^{n},\{w_{k}\})]_{\theta }=\dot{a}_{p,q}(\mathbb{R}%
^{n},\{\omega _{k}\})
\end{equation*}%
holds in the sense of equivalent norms.
\end{theorem}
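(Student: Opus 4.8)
The plan is to deduce the identity by chaining the three results cited above: the Calder\'{o}n product computation of Theorem \ref{calderon-prod1}, the abstract identification of complex interpolation with the Calder\'{o}n product in Proposition \ref{com-inter-Ca-prod}, and --- in order to pass from the sequence spaces to the function spaces $\dot{A}_{p,q}(\Rn,\{t_k\})$ --- the $\varphi$-transform characterization of Theorem \ref{phi-tran}.

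First I would work at the level of the sequence spaces. Viewing $\dot{a}_{p_0,q_0}(\Rn,\{t_k\})$ and $\dot{a}_{p_1,q_1}(\Rn,\{w_k\})$ as quasi-Banach lattices of functions on the discrete $\sigma$-finite measure space $\mathbb{Z}\times\mathbb{Z}^n$ (counting measure, natural pointwise order), I would verify the two hypotheses of Proposition \ref{com-inter-Ca-prod}: separability, which is immediate from $p_i,q_i<\infty$ since the finitely supported sequences are then dense, and analytic convexity. Granting these, Proposition \ref{com-inter-Ca-prod} gives
\[
[\dot{a}_{p_0,q_0}(\Rn,\{t_k\}),\dot{a}_{p_1,q_1}(\Rn,\{w_k\})]_\theta=\big(\dot{a}_{p_0,q_0}(\Rn,\{t_k\})\big)^{1-\theta}\cdot\big(\dot{a}_{p_1,q_1}(\Rn,\{w_k\})\big)^{\theta},
\]
and Theorem \ref{calderon-prod1} identifies the right-hand Calder\'{o}n product with $\dot{a}_{p,q}(\Rn,\{\omega_k\})$, which settles the claim for the sequence spaces.

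To obtain the corresponding statement for the function spaces I would invoke the stability of complex interpolation under retracts. By Theorem \ref{phi-tran} the operators $S_\varphi$ and $T_\psi$, which depend only on the fixed pair $\varphi,\psi$ and not on the parameters $p,q$ or on the weight sequence, form a bounded retraction--coretraction pair between $\dot{A}_{p_i,q_i}$ and $\dot{a}_{p_i,q_i}$ for $i=0,1$, with $T_\psi\circ S_\varphi=\mathrm{id}$. Since the same operators are then bounded on the interpolation spaces, $\dot{A}_{p,q}(\Rn,\{\omega_k\})$ becomes a retract of $[\dot{a}_{p_0,q_0},\dot{a}_{p_1,q_1}]_\theta$, so the sequence-space identity lifts to
\[
[\dot{A}_{p_0,q_0}(\Rn,\{t_k\}),\dot{A}_{p_1,q_1}(\Rn,\{w_k\})]_\theta=\dot{A}_{p,q}(\Rn,\{\omega_k\}),
\]
with equivalence of norms.

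I expect analytic convexity of the sequence lattices to be the main obstacle, since separability is routine and the two cited theorems carry the real computational load. For the $\dot{b}$-spaces the norm is an iterated $\ell_q(\ell_p)$-type expression, and analytic convexity should reduce quickly to the scalar maximum-modulus principle together with the convexity of these mixed sequence norms; for the $\dot{f}$-spaces the mixed $\ell_q(L_p)$ structure is more delicate, and I would establish analytic convexity by exhibiting an equivalent Banach function norm with the Fatou property (the spaces are Banach here, as $p_i,q_i\ge 1$) and applying the maximum principle for lattice-valued analytic functions. A secondary point to check is that $S_\varphi$ and $T_\psi$ are genuinely parameter-independent, so that the retract argument applies simultaneously to both endpoints of the couple.
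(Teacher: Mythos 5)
Your proposal follows exactly the route the paper takes: the paper's entire proof of this theorem is the one-line observation that it follows from Theorem \ref{calderon-prod1} (the Calder\'{o}n product computation) combined with Proposition \ref{com-inter-Ca-prod}, with Theorem \ref{phi-tran} reserved for lifting to the function spaces $\dot{A}_{p,q}$ in the subsequent theorem. Your version is in fact more careful than the paper's, since you explicitly flag the need to verify separability and analytic convexity of the sequence lattices before invoking Proposition \ref{com-inter-Ca-prod}, a step the paper passes over in silence.
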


By Theorems \ref{phi-tran} and \ref{Ind-resol2}, we easily obtain the
following result.

\begin{theorem}
Let $\alpha =(\alpha _{1},\alpha _{2})\in \mathbb{R}^{2},\beta =(\beta
_{1},\beta _{2})\in \mathbb{R}^{2},0<\theta <1,1\leq p_{0}<\infty $, $1\leq
p_{1}<\infty $ and $1\leq q_{0},q_{1}<\infty $. Let $\{t_{k}\}\in \dot{X}%
_{\alpha ,\sigma ,p_{0}}$ be a $p_{0}$-admissible weight sequence with $%
\sigma =((1-\theta )(\frac{p_{0}}{1-\theta })^{\prime },\sigma _{2}\geq
p_{0})$. Let $\{w_{k}\}\in \dot{X}_{\beta ,\sigma ,p_{1}}$ be a $p_{1}$%
-admissible weight sequence with $\sigma =(\theta (\frac{p_{1}}{\theta }%
)^{\prime },\sigma _{2}\geq p_{1})$. We put%
\begin{equation*}
\frac{1}{p}:=\frac{1-\theta }{p_{0}}+\frac{\theta }{p_{1}},\text{\quad }%
\frac{1}{q}:=\frac{1-\theta }{q_{0}}+\frac{\theta }{q_{1}}\text{\quad
and\quad }\omega _{k}:=t_{k}^{1-\varrho }w_{k}^{\varrho },\quad k\in \mathbb{%
Z}.
\end{equation*}%
Then%
\begin{equation*}
\lbrack \dot{A}_{p_{0},q_{0}}(\mathbb{R}^{n},\{t_{k}\}),\dot{A}%
_{p_{1},q_{1}}(\mathbb{R}^{n},\{w_{k}\})]_{\theta }=\dot{A}_{p,q}(\mathbb{R}%
^{n},\{\omega _{k}\})
\end{equation*}%
holds in the sense of equivalent norms.
\end{theorem}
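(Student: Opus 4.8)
The plan is to transfer the sequence-space identity of Theorem \ref{Ind-resol2} to the function spaces through the $\varphi$-transform, using that the pair $(S_\varphi,T_\psi)$ exhibits each space $\dot A_{p,q}(\mathbb R^n,\{t_k\})$ as a retract of the associated sequence space $\dot a_{p,q}(\mathbb R^n,\{t_k\})$. I read $\omega_k=t_k^{1-\theta}w_k^{\theta}$ (the exponent $\varrho$ in the statement being the interpolation parameter $\theta$), I put $\gamma:=(1-\theta)\alpha+\theta\beta$, and throughout I write $X_\theta:=[\dot A_{p_0,q_0}(\mathbb R^n,\{t_k\}),\dot A_{p_1,q_1}(\mathbb R^n,\{w_k\})]_\theta$.

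The first and only nonroutine point is to check that the interpolated weight again sits in a Tyulenev class to which Theorem \ref{phi-tran} applies, namely $\{\omega_k\}\in\dot X_{\gamma,\sigma,p}$ with $\sigma_2\ge p$ and $\sigma_1$ of the admissible form $\theta''\big(\tfrac{p}{\theta''}\big)'$ for $\theta''=\tfrac12$. Writing $\sigma_1^{t}=(1-\theta)\big(\tfrac{p_0}{1-\theta}\big)'$, $\sigma_1^{w}=\theta\big(\tfrac{p_1}{\theta}\big)'$ and letting $\sigma_1,\sigma_2$ be determined by $\tfrac1{\sigma_1}=\tfrac{1-\theta}{\sigma_1^{t}}+\tfrac{\theta}{\sigma_1^{w}}$ and $\tfrac1{\sigma_2}=\tfrac{1-\theta}{\sigma_2^{t}}+\tfrac{\theta}{\sigma_2^{w}}$, one computes $\tfrac1{\sigma_1}=2-\tfrac1p$, so that $\sigma_1=\tfrac12(2p)'$, and $\tfrac1{\sigma_2}\le\tfrac1p$, so that $\sigma_2\ge p$. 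The estimate $M_{Q,p}(\omega_k)\approx M_{Q,p_0}(t_k)^{1-\theta}M_{Q,p_1}(w_k)^{\theta}$ is Lemma \ref{Holder} taken with $E=Q$ and $(q_0,q_1,q)=(p_0,p_1,p)$ (the lower bound here is what forces the use of the lemma rather than plain H\"older), while $M_{Q,\sigma_1}(\omega_j^{-1})\le M_{Q,\sigma_1^{t}}(t_j^{-1})^{1-\theta}M_{Q,\sigma_1^{w}}(w_j^{-1})^{\theta}$ and $M_{Q,\sigma_2}(\omega_j)\le M_{Q,\sigma_2^{t}}(t_j)^{1-\theta}M_{Q,\sigma_2^{w}}(w_j)^{\theta}$ are H\"older's inequality. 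Multiplying the two bounds \eqref{Asum1} for $\{t_k\}$ raised to $1-\theta$ with those for $\{w_k\}$ raised to $\theta$ then yields \eqref{Asum1} for $\{\omega_k\}$ with smoothness $\gamma$; the $p$-admissibility of $\{\omega_k\}$ is immediate from H\"older.

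With this membership secured, Theorem \ref{phi-tran} applies simultaneously to $\dot A_{p_0,q_0}(\mathbb R^n,\{t_k\})$, $\dot A_{p_1,q_1}(\mathbb R^n,\{w_k\})$ and $\dot A_{p,q}(\mathbb R^n,\{\omega_k\})$, so that in each case $S_\varphi$ and $T_\psi$ are bounded between the function space and its sequence analogue and $T_\psi\circ S_\varphi=\mathrm{id}$. By Theorem \ref{embeddings-S-inf} both members of the couple embed continuously into $\mathcal S_\infty'(\mathbb R^n)$, so $X_\theta$ is well defined and $(S_\varphi,T_\psi)$ act as operators of the couple onto the sequence couple; since all spaces in play are Banach ($1\le p_i,q_i<\infty$) the complex method is an exact interpolation functor, and the interpolation property yields bounded maps $S_\varphi\colon X_\theta\to[\dot a_{p_0,q_0}(\mathbb R^n,\{t_k\}),\dot a_{p_1,q_1}(\mathbb R^n,\{w_k\})]_\theta$ and $T_\psi$ in the reverse direction. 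By Theorem \ref{Ind-resol2} this sequence interpolation space equals $\dot a_{p,q}(\mathbb R^n,\{\omega_k\})$.

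It then remains to run the standard retract computation. For $f\in X_\theta$ the relation $f=T_\psi S_\varphi f$ together with the boundedness of $T_\psi\colon\dot a_{p,q}(\mathbb R^n,\{\omega_k\})\to\dot A_{p,q}(\mathbb R^n,\{\omega_k\})$ and of $S_\varphi\colon X_\theta\to\dot a_{p,q}(\mathbb R^n,\{\omega_k\})$ gives
\[
\|f\|_{\dot A_{p,q}(\mathbb R^n,\{\omega_k\})}\lesssim\|S_\varphi f\|_{\dot a_{p,q}(\mathbb R^n,\{\omega_k\})}\lesssim\|f\|_{X_\theta},
\]
and applying the same two operators in their other roles gives, for $f\in\dot A_{p,q}(\mathbb R^n,\{\omega_k\})$, the reverse chain $\|f\|_{X_\theta}\lesssim\|S_\varphi f\|_{\dot a_{p,q}(\mathbb R^n,\{\omega_k\})}\lesssim\|f\|_{\dot A_{p,q}(\mathbb R^n,\{\omega_k\})}$. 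Hence the two norms are equivalent and $X_\theta=\dot A_{p,q}(\mathbb R^n,\{\omega_k\})$. I expect essentially all of the work to lie in the weight computation of the second paragraph; once $\{\omega_k\}$ is placed in the correct Tyulenev class with the admissible value $\sigma_1=\tfrac12(2p)'$, the conclusion is just the abstract retract principle fed by Theorems \ref{phi-tran} and \ref{Ind-resol2}.
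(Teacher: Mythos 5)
Your proposal is correct and follows exactly the route the paper intends: the paper offers no written proof beyond the remark that the result follows from Theorems \ref{phi-tran} and \ref{calderon-prod1} and Proposition \ref{com-inter-Ca-prod}, i.e.\ precisely the retract argument via $S_\varphi$, $T_\psi$ and the sequence-space interpolation of Theorem \ref{Ind-resol2} that you carry out. Your second paragraph, verifying via Lemma \ref{Holder} and H\"older's inequality that $\{\omega_k\}$ lands in $\dot{X}_{\gamma,\sigma,p}$ with $\sigma_1=\tfrac12(2p)'$ and $\sigma_2\geq p$ so that Theorem \ref{phi-tran} actually applies to $\dot{A}_{p,q}(\mathbb{R}^n,\{\omega_k\})$, supplies a step the paper silently omits, and the computation is correct.
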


From Theorem \ref{calderon-prod2} and Calder\'{o}n's result we obtain the
following statement.

\begin{theorem}
\label{calderon-prod2 copy(1)}Let $0<\theta <1,1\leq p_{0}<\infty $ and $%
1\leq q_{0},q_{1}<\infty $. Let $\{t_{k}\}\subset L_{p_{0}}^{\mathrm{loc}}$
and $\{w_{k}\}\subset L_{p_{1}}^{\mathrm{loc}}$ be two weight sequences
satisfying $\mathrm{\eqref{Asum1}}$ with $(\sigma _{1},p,j)=((1-\theta )(%
\frac{p_{0}}{1-\theta })^{\prime },p=p_{0},k)$\ and $(\sigma
_{1},p,j)=(\theta (\frac{q_{1}}{\theta })^{\prime },p=q_{1},k)$,
respectively. We put%
\begin{equation*}
\frac{1}{p}:=\frac{1-\theta }{p_{0}},\quad \frac{1}{q}:=\frac{1-\theta }{%
q_{0}}+\frac{\theta }{q_{1}},\quad \omega _{k}:=t_{k}^{1-\theta
}w_{k}^{\theta },\quad k\in \mathbb{Z}.
\end{equation*}%
Then%
\begin{equation*}
\lbrack \dot{f}_{p_{0},q_{0}}(\mathbb{R}^{n},\{t_{k}\}),\dot{f}_{\infty
,q_{1}}(\mathbb{R}^{n},\{t_{k}\})]_{\theta }=\dot{f}_{p,q}(\mathbb{R}%
^{n},\{\omega _{k}\})
\end{equation*}%
holds in the sense of equivalent norms.
\end{theorem}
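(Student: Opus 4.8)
The plan is to identify the complex interpolation space with the Calderón product already computed in Theorem \ref{calderon-prod2}, namely
\[
\big(\dot{f}_{p_{0},q_{0}}(\mathbb{R}^{n},\{t_{k}\})\big)^{1-\theta}\big(\dot{f}_{\infty,q_{1}}(\mathbb{R}^{n},\{w_{k}\})\big)^{\theta}=\dot{f}_{p,q}(\mathbb{R}^{n},\{\omega_{k}\}).
\]
Write $X_{0}:=\dot{f}_{p_{0},q_{0}}(\mathbb{R}^{n},\{t_{k}\})$ and $X_{1}:=\dot{f}_{\infty,q_{1}}(\mathbb{R}^{n},\{w_{k}\})$; both are Banach lattices with the Fatou property (since $1<p_{0},q_{0},q_{1}<\infty$ and the norms are monotone in $|\lambda_{k,m}|$). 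Calderón's general embedding for Banach lattices \cite{Ca64} then gives $[X_{0},X_{1}]_{\theta}\hookrightarrow X_{0}^{1-\theta}X_{1}^{\theta}=\dot{f}_{p,q}(\mathbb{R}^{n},\{\omega_{k}\})$, so the whole content of the theorem is the reverse embedding $\dot{f}_{p,q}(\mathbb{R}^{n},\{\omega_{k}\})\hookrightarrow[X_{0},X_{1}]_{\theta}$.

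Here lies the main obstacle: unlike in Theorem \ref{Ind-resol2}, the space $X_{1}=\dot{f}_{\infty,q_{1}}(\mathbb{R}^{n},\{w_{k}\})$ is \emph{not separable}, so Proposition \ref{com-inter-Ca-prod} cannot be invoked to conclude $[X_{0},X_{1}]_{\theta}=X_{0}^{1-\theta}X_{1}^{\theta}$ directly. Instead I would establish the reverse embedding by an explicit analytic-family construction, recycling the splitting built in Step 2 of the proof of Theorem \ref{calderon-prod2}. Given a \emph{finitely supported} sequence $\lambda\in\dot{f}_{p,q}(\mathbb{R}^{n},\{\omega_{k}\})$, take $\lambda^{0},\lambda^{1}$ as there, so that $|\lambda_{k,m}|=(\lambda^{0}_{k,m})^{1-\theta}(\lambda^{1}_{k,m})^{\theta}$ together with the bounds \eqref{Est-Int1.New} and \eqref{Est-Int2.New}. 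Define, for $z$ in the closed strip $0\le\operatorname{Re}z\le1$,
\[
F_{k,m}(z):=\frac{\lambda_{k,m}}{|\lambda_{k,m}|}\,(\lambda^{0}_{k,m})^{1-z}(\lambda^{1}_{k,m})^{z},
\]
with $F_{k,m}(z)=0$ when $\lambda_{k,m}=0$. Then $F(\theta)=\lambda$, while on the boundary lines $|F_{k,m}(it)|=\lambda^{0}_{k,m}$ and $|F_{k,m}(1+it)|=\lambda^{1}_{k,m}$. Since only finitely many entries are nonzero, $z\mapsto F(z)$ is continuous into $X_{0}+X_{1}$ and analytic in the interior, and the two boundary maps are continuous (indeed, the finiteness is exactly what forces continuity into the non-separable $X_{1}$, which would fail for a general $\lambda$). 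Hence $F$ is admissible for the complex method, and after the standard rescaling $F(z)\mapsto s^{\,z-\theta}F(z)$ balancing the two boundary norms, estimates \eqref{Est-Int1.New} and \eqref{Est-Int2.New} yield $\|\lambda\|_{[X_{0},X_{1}]_{\theta}}\lesssim\|\lambda^{0}\|_{X_{0}}^{1-\theta}\|\lambda^{1}\|_{X_{1}}^{\theta}\lesssim\|\lambda\|_{\dot{f}_{p,q}(\mathbb{R}^{n},\{\omega_{k}\})}$, where the last step uses $(1-\theta)p/p_{0}=1$.

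Finally I would remove the finiteness assumption by density. Because $p<\infty$ and $q<\infty$, the finitely supported sequences are dense in $\dot{f}_{p,q}(\mathbb{R}^{n},\{\omega_{k}\})$; the uniform estimate of the previous step shows that an approximating sequence $\{\lambda^{(N)}\}$ is Cauchy in $[X_{0},X_{1}]_{\theta}$, and since both $[X_{0},X_{1}]_{\theta}$ and $\dot{f}_{p,q}(\mathbb{R}^{n},\{\omega_{k}\})=X_{0}^{1-\theta}X_{1}^{\theta}$ embed continuously into $X_{0}+X_{1}$, the two limits coincide. This gives $\dot{f}_{p,q}(\mathbb{R}^{n},\{\omega_{k}\})\hookrightarrow[X_{0},X_{1}]_{\theta}$ with the correct norm control, which together with the first paragraph completes the proof. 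The delicate point throughout is that non-separability of $X_{1}$ blocks the abstract route of Theorem \ref{Ind-resol2}; it is precisely the separability of the \emph{target} space $\dot{f}_{p,q}$ (guaranteed by $p,q<\infty$) that rescues the argument, in the spirit of the $F_{\infty}$-interpolation in Frazier and Jawerth \cite{FJ90} and Sickel, Skrzypczak and Vyb\'{\i}ral \cite{SSV13}.
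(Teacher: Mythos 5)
Your argument is correct, and it departs from the paper's route at exactly one point: the passage from the Calder\'on product to the complex interpolation space. The paper obtains this theorem by combining Theorem \ref{calderon-prod2} with the abstract identification $[X_{0},X_{1}]_{\theta}=X_{0}^{1-\theta}\cdot X_{1}^{\theta}$, and you are right that Proposition \ref{com-inter-Ca-prod} (the quasi-Banach version from \cite{KMM07}) cannot be invoked here, since $\dot{f}_{\infty,q_{1}}(\mathbb{R}^{n},\{w_{k}\})$ is not separable. Note, however, that the paper also records, immediately before that proposition, Calder\'on's classical Banach-lattice theorem \cite{Ca64}, which requires only the order-continuity property of the \emph{product} $X_{0}^{1-\theta}\cdot X_{1}^{\theta}=\dot{f}_{p,q}(\mathbb{R}^{n},\{\omega_{k}\})$; this holds because $p,q<\infty$ (dominated convergence), and since all exponents are at least $1$ the spaces are genuine Banach lattices, so the abstract route does close the gap without separability of $X_{1}$. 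Your explicit construction --- analytic families $F_{k,m}(z)=\frac{\lambda_{k,m}}{|\lambda_{k,m}|}(\lambda^{0}_{k,m})^{1-z}(\lambda^{1}_{k,m})^{z}$ built from the nonnegative splitting of Step 2 of Theorem \ref{calderon-prod2}, applied to finitely supported sequences and extended by density, with the norm control coming from \eqref{Est-Int1.New}, \eqref{Est-Int2.New} and the identity $(1-\theta)p/p_{0}=1$ --- is in effect a self-contained proof of the relevant special case of Calder\'on's theorem, in the spirit of the $\dot{F}_{\infty,q}$ interpolation in \cite{FJ90}. What your approach buys is transparency about precisely where $p,q<\infty$ enters (density of finitely supported sequences, hence order continuity of the target) and independence from the abstract lattice machinery; what the paper's approach buys is brevity, at the cost of leaving the reader to notice that it is the classical Calder\'on theorem, and not Proposition \ref{com-inter-Ca-prod}, that must be used for this endpoint case. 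Both routes are valid and yield the same equivalence of norms.
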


Theorems \ref{phi-tran}, \ref{phi-tran2}, \ref{Ind-resol2} and \ref%
{calderon-prod2 copy(1)} yield the following result.

\begin{theorem}
\label{p=infinity}Let $\alpha =(\alpha _{1},\alpha _{2})\in \mathbb{R}%
^{2},\beta =(\beta _{1},\beta _{2})\in \mathbb{R}^{2},0<\theta <1$, $1\leq
p_{1}<\infty $ and $1\leq q_{0},q_{1}<\infty $. Let $\{t_{k}\}\in \dot{X}%
_{\alpha ,\sigma ,p_{0}}$ be a $p_{0}$-admissible weight sequence with $%
\sigma =((1-\theta )(\frac{p_{0}}{1-\theta })^{\prime },\sigma _{2}\geq
p_{0})$. Let $\{w_{k}\}\in \dot{X}_{\beta ,\sigma ,q_{1}}$ be a $p_{1}$%
-admissible weight sequence with $\sigma =(\theta (\frac{q_{1}}{\theta }%
)^{\prime },\sigma _{2}\geq q_{1})$. We put%
\begin{equation*}
\frac{1}{p}:=\frac{1-\theta }{p_{0}},\text{\quad }\frac{1}{q}:=\frac{%
1-\theta }{q_{0}}+\frac{\theta }{q_{1}}\text{\quad and\quad }\omega
_{k}:=t_{k}^{1-\theta }w_{k}^{\theta },,\quad k\in \mathbb{Z}.
\end{equation*}%
Then%
\begin{equation*}
\lbrack \dot{F}_{p_{0},q_{0}}(\mathbb{R}^{n},\{t_{k}\}),\dot{F}_{\infty
,q_{1}}(\mathbb{R}^{n},\{w_{k}\})]_{\theta }=\dot{F}_{p,q}(\mathbb{R}%
^{n},\{\omega _{k}\})
\end{equation*}%
holds in the sense of equivalent norms.
\end{theorem}

\begin{remark}
The methods of \cite{SSV13} are capable of dealing with the spaces $\dot{B}%
_{p,q}(\mathbb{R}^{n},\{t_{k}\})$ and $\dot{F}_{p,q}(\mathbb{R}%
^{n},\{t_{k}\})$ with the smoothness $t_{k}$ independent of $k,k\in \mathbb{Z%
}$, but they do not apply in the case of spaces of variable smoothness as in
this paper.
\end{remark}

\begin{corollary}
Let $s_{0},s_{1}\in \mathbb{R},0<\theta <1,1\leq p_{0}<\infty $ and $1\leq
q_{0},q_{1}<\infty $. Let $\omega _{0}^{p_{0}}\in A_{\frac{p_{0}}{1-\theta }%
}(\mathbb{R}^{n})$ and $\omega _{1}^{q_{1}}\in A_{\frac{q_{1}}{\theta }}(%
\mathbb{R}^{n})$,%
\begin{equation*}
\frac{1}{p}:=\frac{1-\theta }{p_{0}},\text{\quad }\frac{1}{q}:=\frac{%
1-\theta }{q_{0}}+\frac{\theta }{q_{1}}\quad \text{and}\quad s=(1-\theta
)s_{0}+\theta s_{1}.
\end{equation*}%
Then%
\begin{equation*}
\lbrack \dot{F}_{p_{0},q_{0}}^{s_{0}}(\mathbb{R}^{n},\omega _{0}),\dot{F}%
_{\infty ,q_{1}}^{s_{1}}(\mathbb{R}^{n},\omega _{1})]_{\theta }=\dot{F}%
_{p,q}^{s}(\mathbb{R}^{n},\omega _{0}^{1-\theta }\omega _{1}^{\theta })
\end{equation*}%
holds in the sense of equivalent norms.
\end{corollary}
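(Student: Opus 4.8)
The plan is to read off this statement from the preceding interpolation theorem for $\dot{F}$-spaces of variable smoothness (the $\dot{F}_{\infty,q_{1}}$-interpolation theorem stated just above), specialised to power-type weight sequences. The only ingredients needed are Example \ref{Example1}, which places such sequences in the right Tyulenev class, together with the identification $\dot{F}_{p,q}(\mathbb{R}^{n},\{2^{k\tau}v\})=\dot{F}_{p,q}^{\tau}(\mathbb{R}^{n},v)$ recorded after Definition \ref{B-F-def}.

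First I would set $t_{k}:=2^{ks_{0}}\omega_{0}$ and $w_{k}:=2^{ks_{1}}\omega_{1}$ for $k\in\mathbb{Z}$. Since $\omega_{0}^{p_{0}}\in A_{p_{0}/(1-\theta)}(\mathbb{R}^{n})$ and $\omega_{1}^{q_{1}}\in A_{q_{1}/\theta}(\mathbb{R}^{n})$ are locally integrable, the sequences $\{t_{k}\}$ and $\{w_{k}\}$ are $p_{0}$- and $q_{1}$-admissible, respectively. Because $0<\theta<1$ and $p_{0},q_{1}\geq 1$, we have the strict inequalities $0<1-\theta<p_{0}$ and $0<\theta<q_{1}$, so Example \ref{Example1} applies: with $(r,p,s,\omega)=(1-\theta,p_{0},s_{0},\omega_{0})$ it gives $\{t_{k}\}\in\dot{X}_{\alpha,\sigma,p_{0}}$ with $\alpha=(s_{0},s_{0})$ and $\sigma=((1-\theta)(\tfrac{p_{0}}{1-\theta})',p_{0})$, and with $(r,p,s,\omega)=(\theta,q_{1},s_{1},\omega_{1})$ it gives $\{w_{k}\}\in\dot{X}_{\beta,\sigma,q_{1}}$ with $\beta=(s_{1},s_{1})$ and $\sigma=(\theta(\tfrac{q_{1}}{\theta})',q_{1})$. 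These are exactly the admissibility and Tyulenev-class hypotheses of the preceding theorem (taking $\sigma_{2}=p_{0}$ and $\sigma_{2}=q_{1}$, which satisfy $\sigma_{2}\geq p_{0}$ and $\sigma_{2}\geq q_{1}$).

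A one-line computation then produces the interpolated smoothness sequence
$$\omega_{k}=t_{k}^{1-\theta}w_{k}^{\theta}=2^{k((1-\theta)s_{0}+\theta s_{1})}\,\omega_{0}^{1-\theta}\omega_{1}^{\theta}=2^{ks}\,\omega_{0}^{1-\theta}\omega_{1}^{\theta},\qquad s=(1-\theta)s_{0}+\theta s_{1}.$$
Since $\tfrac{1}{p}=\tfrac{1-\theta}{p_{0}}$ and $\tfrac{1}{q}=\tfrac{1-\theta}{q_{0}}+\tfrac{\theta}{q_{1}}$ coincide with the parameters there, the preceding theorem yields $[\dot{F}_{p_{0},q_{0}}(\mathbb{R}^{n},\{t_{k}\}),\dot{F}_{\infty,q_{1}}(\mathbb{R}^{n},\{w_{k}\})]_{\theta}=\dot{F}_{p,q}(\mathbb{R}^{n},\{\omega_{k}\})$ with equivalent norms. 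Finally I would translate back through the displayed identity: the three spaces become $\dot{F}_{p_{0},q_{0}}^{s_{0}}(\mathbb{R}^{n},\omega_{0})$, $\dot{F}_{\infty,q_{1}}^{s_{1}}(\mathbb{R}^{n},\omega_{1})$ and $\dot{F}_{p,q}^{s}(\mathbb{R}^{n},\omega_{0}^{1-\theta}\omega_{1}^{\theta})$, which is the asserted equality.

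Since the whole argument is a specialisation, there is no genuine obstacle; the only points deserving a line of justification are the strict inequalities $0<1-\theta<p_{0}$ and $0<\theta<q_{1}$ that make Example \ref{Example1} applicable, and the verification that the $L_{\infty}$-based definition of $\dot{F}_{\infty,q_{1}}^{s_{1}}(\mathbb{R}^{n},\omega_{1})$ matching Definition \ref{F-inf-def} coincides with $\dot{F}_{\infty,q_{1}}(\mathbb{R}^{n},\{2^{ks_{1}}\omega_{1}\})$, both of which are immediate from the definitions.
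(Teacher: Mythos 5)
Your proposal is correct and is essentially the paper's intended derivation: the corollary is stated as an immediate specialisation of the preceding theorem on $[\dot{F}_{p_{0},q_{0}}(\mathbb{R}^{n},\{t_{k}\}),\dot{F}_{\infty ,q_{1}}(\mathbb{R}^{n},\{w_{k}\})]_{\theta }$ to the power-type sequences $t_{k}=2^{ks_{0}}\omega _{0}$, $w_{k}=2^{ks_{1}}\omega _{1}$, with Example \ref{Example1} supplying the Tyulenev-class hypotheses exactly as you describe. Your explicit verification of $0<1-\theta <p_{0}$, $0<\theta <q_{1}$ and of $\sigma _{2}=p_{0}$, $\sigma _{2}=q_{1}$ fills in the (unstated) details correctly.
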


\begin{proof}
The proof is easily followed by Theorem \ref{p=infinity}.
\end{proof}

\begin{remark}
All our results are easily generalized to the inhomogeneous Triebel-Lizorkin
spaces $F_{p,q}(\mathbb{R}^{n},\{t_{k}\})$ and the inhomogeneous Besov
spaces $B_{p,q}(\mathbb{R}^{n},\{t_{k}\})$.
\end{remark}


\begin{thebibliography}{99}
\bibitem{AJ80} K. Andersen, R. John, Weighted inequalities for vector-valued
maximal functions and singular integrals, Studia Math. 69 (1980), 19--31.

\bibitem{BL76} J. Bergh, J. L\"{o}fstr\"{o}m, Interpolation spaces. An
introduction, Springer, Berlin, 1976.

\bibitem{B03} O.V. Besov, Equivalent normings of spaces of functions of
variable smoothness, Function spaces, approximation, and differential
equations, A collection of papers dedicated to the 70th birthday of Oleg
Vladimorovich Besov, a corresponding member of the Russian Academy of
Sciences, Tr. May. Inst. Steklova, vol. 243, Nauka, Moscow 2003, pp. 87--95;
English transl. in Proc. Steklov Inst. Math. 243 (2003), 80--88.

\bibitem{B05} O.V. Besov, Interpolation, embedding, and extension of spaces
of functions of variable smoothness, Investigations in the theory of
functions and differential equations, A collection of papers dedicated to
the 100th birthday of academician Sergei Mikhailovich Nikol'skii, Tr. Mat.
Inst. Steklova, vol. 248, Nauka, Moscow 2005, pp. 52--63; English transl.
Proc. Steklov Inst. Math. 248 (2005), 47--58.

\bibitem{BoHo06} M. Bownik, K.-P. Ho, Atomic and molecular decompositions of
anisotropic Triebel-Lizorkin spaces, Trans. Amer. Math. Soc. 358 (2006),
1469--1510.

\bibitem{M07} M. Bownik, Anisotropic Triebel-Lizorkin spaces with doubling
measures, J. Geom. Anal. 17 (2007), 337--424.

\bibitem{M08} M. Bownik, Duality and interpolation of anisotropic
Triebel-Lizorkin spaces, Math. Z. 259 (2008), 131--169.

\bibitem{Bui82} H.Q. Bui, Weighted Besov and Triebel spaces: interpolation
by the real method, Hiroshima Math. J. 12 (1982), 581--605.

\bibitem{Ca64} A.P. Calder\'{o}n, Intermediate spaces and interpolation, the
complex method, Studia Math. 24 (1964), 113--190.

\bibitem{CF88} F. Cobos, D.L. Fernandez, Hardy-Sobolev spaces and Besov
spaces with a function parameter, In M. Cwikel and J. Peetre, editors,
Function spaces and applications, volume 1302 of Lect. Notes Math., pages
158--170. Proc. US-Swed. Seminar held in Lund, June, 1986, Springer, 1988.

\bibitem{D7} D. Drihem, Spline representations of Lizorkin-Triebel spaces
with general weights, submitted

\bibitem{D20} D. Drihem, Besov spaces\ with general weights. J. Math. Study.
56, No. 1 (2023), 18-92.

\bibitem{D20.1} D. Drihem, Triebel-Lizorkin spaces with general weights.
Adv. Oper. Theory 8, 5 (2023). https://doi.org/10.1007/s43036-022-00230-0

\bibitem{D20.2} D. Drihem, Duality of function spaces with general weights.
Preprint.

\bibitem{Du01} J. Duoandikoetxea, Fourier Analysis, Grad. Studies in Math.
29, American Mathematical Society, Providence, RI, 2001.

\bibitem{ET96} D. Edmunds, H. Triebel, Spectral theory for isotropic fractal
drums. C.R. Acad. Sci. Paris 326, s\'{e}rie I (1998), 1269--1274.

\bibitem{ET99} D. Edmunds, H. Triebel, Eigenfrequencies of isotropic fractal
drums. Oper. Theory: Adv. \& Appl. 110 (1999), 81--102.

\bibitem{FL06} W. Farkas, H.-G. Leopold, Characterisations of function
spaces of generalised smoothness, Annali di Mat. Pura Appl. 185 (2006),
1--62.

\bibitem{FeSt71} C. Fefferman, E.M. Stein, Some maximal inequalities, Amer.
J. Math. 93 (1971), 107--115.

\bibitem{FJ86} M. Frazier, B. Jawerth, Decomposition of Besov spaces,
Indiana Univ. Math. J. 34 (1985), 777--799.

\bibitem{FJ90} M. Frazier, B. Jawerth,\ A discrete transform and
decomposition of distribution spaces, J. Funct. Anal. 93 (1990), 34--170.

\bibitem{FrJaWe01} M. Frazier, B. Jawerth, G. Weiss, Littlewood-Paley Theory
and the Study of Function Spaces. CBMS Regional Conference Series in
Mathematics, vol. 79. Published for the Conference Board of the Mathematical
Sciences, Washington, DC. American Mathematical Society, Providence (1991).

\bibitem{GR85} J. Garc\.{\i}a-Cuerva, J. L. Rubio de Francia. Weighted Norm
Inequalities and Related Topics, North-Holland Mathematics Studies, 116.
Notas de Matem\'{a}tica [Mathematical Notes], 104. North-Holland Publishing
Co., Amsterdam, 1985.

\bibitem{Go79} M.L. Goldman, A description of the traces of some function
spaces, Trudy Mat. Inst. Steklov. 150 (1979), 99--127, (Russian) English
transl.: Proc. Steklov Inst. Math. 1981, no. 4 (150).

\bibitem{Go83} M.L. Goldman, A method of coverings for describing general
spaces of Besov type, Trudy Mat. Inst. Steklov. 156 (1980), 47--81,
(Russian) English transl.: Proc. Steklov Inst. Math. 1983, no. 2 (156).

\bibitem{L. Graf14} L. Grafakos, Classical Fourier Analysis, Third Edition,
Graduate Texts in Math., no 249, Springer, New York, 2014.

\bibitem{J77} B. Jawerth, Some observations on Besov and Lizorkin-Triebel
spaces,\ Math. Stand. 40 (1977), 94--104.

\bibitem{KMM07} N. Kalton, S. Mayboroda, M. Mitrea, Interpolation of
Hardy-Sobolev-Besov-Triebel-Lizorkin spaces and applications to problems in
partial differential equations, Contemporary Math., 445 (2007), 121--177.

\bibitem{Ka83} G.A. Kalyabin, Description of functions from classes of
Besov-Lizorkin-Triebel type, Tr. Mat. Inst. Steklova. 156 (1980), 82--109,
(Russian) English translation: Proc. Steklov Inst. Math. 1983, (156).

\bibitem{Kl87} G.A. Kalyabin, P.I. Lizorkin, Spaces of functions of
generalized smoothness, Math. Nachr. 133 (1987), 7--32.

\bibitem{Kok78} V.M. Kokilashvili, Maximum inequalities and multipliers in
weighted Lizorkin-Triebel spaces, Dokl. Akad. Nauk SSSR, 239, (1978) 42--45.
Russian.

\bibitem{Ky03} G. Kyriazis, Decomposition systems for function spaces,
Studia Math. 157(2) (2003), 133--169.

\bibitem{Mo01} S.D. Moura, Function spaces of generalised smoothness, Diss.
Math. \textbf{398} (2001), 88 pp..

\bibitem{Mu72} B. Muckenhoupt, Weighted norm inequalities for the Hardy
maximal function, Trans. Amer. Math. Soc. 165 (1972), 207--226.

\bibitem{HaS08} D.D. Haroske, S.D. Moura, Continuity envelopes and sharp
embeddings in spaces of generalized smoothness, J. Funct. Anal. 254(6)
(2008), 1487--1521.

\bibitem{Ry01} V.S. Rychkov, Littlewood-Paley theory and function spaces
with $A_{p}^{\mathsf{loc}}$\ -weights, Math. Nachr. 224(1) (2001), 145-180.

\bibitem{SchTr87} H.-J. Schmeisser, H. Triebel, Topics in Fourier analysis
and function spaces, Wiley, Chichester, 1987.

\bibitem{Sch982} T. Schott, Function spaces with exponential weights, II.
Math. Nachr. 196 (1998), 231--250.

\bibitem{SSV13} W. Sickel, L. Skrzypczak and J. Vyb\'{\i}ral, Complex
interpolation of weighted Besov- and Lizorkin-Triebel spaces, Acta. Math.
Sci, 30(8) (2014), 1297--1323.

\bibitem{St93} E.M. Stein. Harmonic analysis: real-variable methods,
orthogonality, and oscillatory integrals, Princeton Mathematical Series, 43.
Monographs in Harmonic Analysis, III. Princeton University Press, Princeton,
NJ, 1993.

\bibitem{ST89} J.-O. Str\"{o}mberg, A. Torchinsky, Weighted Hardy spaces,
Lecture Notes in Mathematics, 1381, Springer-Verlag, Berlin, 1989.

\bibitem{T78} H. Triebel, Interpolation theory, function spaces,
differential operators, Verlag der Wissenschaften, Berlin, 1978.

\bibitem{T1} H. Triebel, Theory of function spaces, Birkh\"{a}user Verlag,
Basel, 1983.

\bibitem{T2} H. Triebel, Theory of function spaces II, Birkh\"{a}user
Verlag, Basel, 1992.

\bibitem{Ty14} A.I. Tyulenev, Description of traces of functions in the
Sobolev space with a Muckenhoupt weight, Function spaces and related
questions in analysis, A collection of papers dedicated to the 80th birthday
of Oleg Vladimorovich Besov, a corresponding member of the Russian Academy
of Sciences, Tr. Mat. Inst. Steklova, vol. 284, MAIK, Moscow 2014, pp.
288-303; English transl. in Proc. Steklov Inst. Math. 284 (2014) 280--295.

\bibitem{Ty15} A.I. Tyulenev, Some new function spaces of variable
smoothness, Sbornik Mathematics. 206 (2015), 849--891.

\bibitem{Ty-N-L} A.I. Tyulenev, Besov-type spaces of variable smoothness on
rough domains, Nonlinear Anal. 145 (2016), 176--198.

\bibitem{Ty-151} A.I. Tyulenev, On various approaches to Besov-type spaces
of variable smoothness, J. Math. Anal. Appl. 451 (2017), 371--392.

\bibitem{Wo12} A. Wojciechowska, Multidimensional wavelet bases in Besov and
Lizorkin-Triebel spaces, PhD-thesis, Adam Mickiewicz University Pozna\'{n},
Pozna\'{n}, 2012.

\bibitem{YY2} D. Yang, W. Yuan, New Besov-type spaces and
Triebel-Lizorkin-type spaces including $Q$ spaces, Math. Z. 265 (2010),
451--480.

\bibitem{YYZ13} D. Yang, W. Yuan, C. Zhuo, Complex interpolation on
Besov-type and Triebel-Lizorkin-type spaces. Anal Appl (Singap)., (2013),
11: 1350021, 45pp.
\end{thebibliography}
\end{document}